\documentclass{article}
\usepackage{graphicx} 
\usepackage{amsthm}
\usepackage{amsmath,amsfonts,amssymb}
\usepackage{a4wide}
\usepackage[usenames,dvipsnames]{color}
\usepackage[verbose,colorlinks=true,linktocpage=true,linkcolor=blue,citecolor=blue]{hyperref}
\usepackage{footnote}
\usepackage{tikz}
\usepackage{extarrows}
\usepackage{array} 
\usepackage[all, cmtip]{xy} 

\usepackage{hyperref}
\usepackage{marvosym}
\usepackage{mathrsfs} 
\usepackage{indentfirst} 
\setlength{\parindent}{1.0em}  
\usepackage{accents} 

\usepackage{chngcntr}
\counterwithin{table}{section}

\theoremstyle{definition}
\newtheorem{definition}{Definition}[section]

\theoremstyle{plain}
\newtheorem{theorem}[definition]{Theorem}
\newtheorem{proposition}[definition]{Proposition}
\newtheorem{lemma}[definition]{Lemma}
\newtheorem{corollary}[definition]{Corollary}

\newtheorem{conjecture}[definition]{Conjecture}
\theoremstyle{remark}
\newtheorem{remark}[definition]{Remark}

\numberwithin{equation}{section}

\newcommand\keywords[1]{\textbf{keywords}:#1}

\newcommand{\EmailD}[1]{\href{mailto:#1}{\Letter\ #1}}

\hypersetup{
    colorlinks=true, 
    linkcolor=blue, 
    urlcolor=blue, 
}

\allowdisplaybreaks[1]

\title{Drinfeld super Yangian of the exceptional Lie superalgebra $D(2,1;\lambda)$}
\author{Hongda Lin$^{1}$ and Honglian Zhang$^{2,}$\thanks{Corresponding Author.}}
\date{}

\begin{document}

\maketitle

\begin{center}
\footnotesize
\begin{itemize}
\item[$1$] Shenzhen International Center for Mathematics, Southern University of Science and Technology, P.R. China.
\EmailD{linhd@sustech.edu.cn}

\item[$2$] Department of Mathematics, Shanghai University, Shanghai 200444, P.R. China.

\EmailD{hlzhangmath@shu.edu.cn}
\end{itemize}
\end{center}

\begin{abstract}
In this paper, we establish the first rigorous framework for the Drinfeld super Yangian associated with an exceptional Lie superalgebra, which lacks a classical Lie algebraic counterpart. Specifically, we systematically investigate the Drinfeld presentation and structural properties of the super Yangian associated with the exceptional Lie superalgebra $D(2,1;\lambda)$. First, we introduce a Drinfeld presentation for the super Yangian associated with the exceptional Lie superalgebra  $D(2,1;\lambda)$, explicitly constructing its current generators and defining relations. A key innovation is the construction of a Poincaré-Birkhoff-Witt (PBW) basis using degeneration techniques from the corresponding quantum loop superalgebra. Furthermore, we demonstrate that the super Yangian possesses a Hopf superalgebra structure, explicitly providing the coproduct, counit, and antipode.
\end{abstract}
\keywords  {\ Exceptional Lie superalgebra; Drinfeld presentation; super Yangian; PBW basis; Hopf superalgebra}

\section{Introduction}
\textit{Quantum groups}, independently introduced by Drinfeld \cite{Dr85} and Jimbo \cite{Ji85} around 1985, have become a fundamental domain in modern algebraic research and mathematical physics. This theory has profoundly influenced various areas, including algebraic geometry, low-dimensional topology, combinatorics, quantum field theory, statistical mechanics, and quantum computation.
In the framework established by Drinfeld and Jimbo, quantum groups are specific families of Hopf algebras that deform the universal enveloping algebras of finite-dimensional simple Lie algebras or Kac-Moody algebras, widely recognized as the \textit{Drinfeld-Jimbo presentation}. This presentation is instrumental in constructing solutions to the quantum Yang-Baxter equations.

Two prominent families of quantum groups are \textit{Yangians} and \textit{quantum affine algebras}, both of which admit a presentation established by Drinfeld \cite{Dr87}, known as the \textit{Drinfeld presentation}. This new presentation has been widely used to study the highest weight representations and classify finite-dimensional irreducible representations \cite{CP95}. Drinfeld provided a conjecture that the isomorphism between this new presentation and the Drinfeld-Jimbo presentation, while the general proofs for quantum affine algebras were given by Beck \cite{Be94}, Damiani \cite{Da12}, and Jing-Zhang \cite{JZ16} using braid group constructions.

\textit{Quantum superalgebras} are defined as supersymmetric generalizations of quantum groups, specifically designed to describe supersymmetric physical fields. Bracken, Gould, and Zhang \cite{BGZ90,ZGB91} developed an R-matrix that solves the quantum Yang-Baxter equation of $\mathbb{Z}_2$-graded. Yamane \cite{Ya94-2,Ya99} systematically established Serre-type presentations for all classical types of (quantum) affine Lie superalgebras and introduced the Drinfeld presentation for quantum affine superalgebras of the special linear Lie superalgebra $\mathfrak{sl}_{m|n}$. Subsequent research by Yamane and collaborators \cite{HSTY08} extended these results to the Lie superalgebra $D(2,1;\lambda)$. Recently, an isomorphism between the Drinfeld-Jimbo presentation and the Drinfeld presentation of the quantum affine superalgebra $U_q(\widehat{\mathfrak{sl}}_{m|n})$ has been established using a minimalistic presentation with a finite set of generators and defining relations \cite{LinYZ24}.

The study of super Yangians was initiated by Nazarov \cite{Na91}, who generalized the construction of the classical Yangian $Y(\mathfrak{gl}_n)$ to superalgebras. The Drinfeld presentation of super Yangians has been developed for several families of Lie superalgebras, including the general linear cases \cite{St94,Go07}, the orthosymplectic cases \cite{FT23,Mo24}, and the queer cases \cite{CW25}. Additionally, the Hopf superalgebra structures of Drinfeld super Yangians of types $A(m,n)$ and $B(m,n)$ have been explicitly described \cite{Ma23,MS22}. However, the super Yangians for exceptional Lie superalgebras remain an open problem, with no systematic results currently available.

It has long been recognized that Yangians can be realized as degenerate limits of quantum loop algebras, which are quotients of quantum affine algebras with trivial central elements. This fact was first stated by Drinfeld \cite{Dr86} and later rigorously proved by \cite{GM12} and \cite{GT13} using different methods. Gautam and Toledano Laredo also established an equivalence of tensor categories that identifies finite-dimensional representations of Yangians with those of quantum loop algebras in the symmetrizable Kac-Moody setting. Following Guay-Ma's \cite{GM12} proof, analogous results have been extended to twisted Yangians \cite{CG15} and super Yangians \cite{LinWZ24}. In other breakthrough results \cite{LWZ25,LZ24}, the authors demonstrated that the Drinfeld presentation for twisted Yangians for (quasi-)split types can be regarded as an appropriate degeneration of $\imath$quantum groups for the same types. 
These findings inspire the development of Drinfeld super Yangians for $D(2,1;\lambda)$ by utilizing the degeneration approach from the corresponding quantum loop superalgebras.

The Lie superalgebras $D(2,1;\lambda)$ \cite{Ka77,Mu12} are parametrized by a one-parameter family of 17-dimensional nonisomorphic simple Lie superalgebras, with the special cases $\lambda\in\{1,-2,-1/2\}$ recovering the orthosymplectic Lie superalgebra $D(2,1)=\mathfrak{osp}_{4|2}$. Like $F(4)$ and $G(3)$, $D(2,1;\lambda)$ is a class of the exceptional basic classical Lie superalgebras. Its peculiarity lies in 
the lack of a Lie algebraic analogue, setting it apart from the general linear, orthosymplectic and other exceptional Lie superalgebras. Moreover, the deep connections of $D(2,1;\lambda)$ with the superconformal algebra, AdS/CFT correspondence and Tits construction \cite{BEl02,DF04,HSTY08,Mal99} is very promising us to study its associated super Yangians.  

In this work, we will introduce the definition of the Drinfeld super Yangian $Y(D(2,1;\lambda))$ whose central is trivial. The aim of this work is to investigate several important structures for $Y(D(2,1;\lambda))$, including: (1)~its PBW basis, (2)~classical limit, (3)
~minimalistic presentation and (4)~coproduct. Our results coincides with the orthosymplectic Yangian $Y(\mathfrak{osp}_{4|2})$ \cite{FT23} when $\lambda$ specializes to $1$, $-2$, or $-1/2$. 

The paper is organized as follows. 
In Section \ref{se:simpleLiesuper}, we review the two non-conjugacy Dynkin diagrams and their corresponding root vectors for the exceptional Lie superalgebra $D(2,1;\lambda)$. We select dual bases compatible with the standard nondegenerate bilinear form and employ them to define the (half) Casimir element for $D(2,1;\lambda)$. Furthermore, we derive a current presentation for the affine Lie superalgebra $\widehat{D}(2,1;\lambda)$ and explain briefly how it degenerates into the associated polynomial Lie superalgebra in a centain recursive way. 
Section \ref{se:quantumAFsuper} is devoted to reformulate the isomorphism---established in \cite{HSTY08}---between the Drinfeld-Jimbo and the Drinfeld presentation of quantum affine superalgebra $U_q(\widehat{D}(2,1;\lambda))$. This reformulation enables us to develop the classical limit of $U_q(\widehat{D}(2,1;\lambda))$ in terms of Drinfeld current generators. 
In Section \ref{se:superYangian}, we first present the Drinfeld presentation of the super Yangian $Y(D(2,1;\lambda))$, establishing its current generators and defining relations. We then demonstrate that this structure can be constructed as the limit form of the quantum loop superalgebra associated with $D(2,1;\lambda)$. Based on this degeneration framework, we construct the PBW basis, which allows us to explicitly characterize its classical limit to the corresponding universal enveloping superalgebra. 
Meanwhile, we conjecture that the odd reflection within the framework of $ Y_{\hbar}(\mathfrak{g}) $  transforms the associated Dynkin diagram, motivated by the isomorphism between the quantum affine superalgebras $ U_q(\widehat{D}_{\lambda}) $ and $ U_q(\widehat{D}_{\lambda}^{\circ}) $ \cite{HSTY08}.
Finally, Section \ref{se:hopfstructure} provides the minimalistic presentation for $Y(D(2,1;\lambda))$, demonstrating that it admits a Hopf superalgebraic structure endowed with an explicit coproduct, counit and antipode.

\medskip
\section{Simple Lie superalgebras $D(2,1;\lambda)$ and its current algebras}\label{se:simpleLiesuper}
Let $\mathbb{C}$, $\mathbb{Z}$, $\mathbb{Z}_+$, $\mathbb{N}$ denote the sets the set of complex numbers, integer numbers, non-negative integer numbers and positive integer numbers, respectively. Throughout this paper, all (super-)spaces, associative (super-)algebras, Lie (super-)algebras, homomorphisms, isomorphisms and (anti-)automorphisms are defined over $\mathbb{C}$ unless otherwise specified. 
Let $\delta_{ab}$ be the Kronecker function equals $1$ if $a=b$ and $0$ otherwise. 
Write $\mathbb{Z}_2=\mathbb{Z}/2\mathbb{Z}=\left\{\bar{0},\bar{1}\right\}$. The $\mathbb{Z}_2$-grading of an element $x$ is denoted by $|x|$. If $|x|=\bar{0}$ for a homogeneous element $x$, we call $x$ an even element; otherwise, it is odd. 

If both $\mathcal{A}=\mathcal{A}_{\bar{0}}\oplus \mathcal{A}_{\bar{1}}$ and $\mathcal{B}=\mathcal{B}_{\bar{0}}\oplus \mathcal{B}_{\bar{1}}$ are associative (Lie) superalgebras, the tensor product $\mathcal{A}\otimes\mathcal{B}$ also forms an associative (Lie) superalgebra with the following multiplication rule: 
$$(x_1\otimes y_1)(x_2\otimes y_2)=(-1)^{|y_1||x_2|}x_1x_2\otimes y_1y_2,$$
for any homogeneous $x_1, x_2\in \mathcal{A}$, $y_1,y_2\in \mathcal{B}$. 
Define
\begin{gather*}
    (\mathcal{A}\otimes \mathcal{B})_{\bar{0}}=(\mathcal{A}_{\bar{0}}\otimes \mathcal{B}_{\bar{0}})\oplus (\mathcal{A}_{\bar{1}}\otimes \mathcal{B}_{\bar{1}}), \quad
    (\mathcal{A}\otimes \mathcal{B})_{\bar{1}}=(\mathcal{A}_{\bar{0}}\otimes \mathcal{B}_{\bar{1}})\oplus (\mathcal{A}_{\bar{1}}\otimes \mathcal{B}_{\bar{0}}).
\end{gather*}
Then $\mathcal{A}\otimes \mathcal{B}$ is a $\mathbb{Z}_2$-graded algebra with the decomposition $$\mathcal{A}\otimes \mathcal{B}=(\mathcal{A}\otimes \mathcal{B})_{\bar{0}}\oplus (\mathcal{A}\otimes \mathcal{B})_{\bar{1}}.$$

\subsection{Non-conjugacy Dynkin diagrams}\label{se:simpleLiesuper:DG}
Let $\mathfrak{g}$ be the simple Lie superalgebra of type $D(2,1;\lambda)$ for $\lambda\in\mathbb{C}\setminus\{0,-1\}$. The exclusion of $0$ and $-1$ is necessary to ensure the simplicity of $\mathfrak{g}$. As stated in \cite[Section 4.2]{Mu12}, $\mathfrak{g}$ has dimension 17 and admits a standard generator set $\{e_i,f_i,h_i|i=1,2,3\}$ with the Cartan subalgebra
$\mathfrak{h}=\operatorname{span}\{h_i|i=1,2,3\}$. The $\mathbb{Z}_2$-grading of the generators is given by $|e_i|=|f_i|$ and $|h_i|=\bar{0}$. For notational convenience, we fix the index set $I=\{1,2,3\}$ and extend the $\mathbb{Z}_2$-grading on $I$ via  $|i|=|e_i|$ for all $i\in I$. 

Recall that \cite[Section 2.5]{Ka77}, $\mathfrak{g}$ possesses two non-conjugacy Dynkin diagrams with associated Cartan matrices, as shown in Table \ref{table:1}. Here, the $i$-th node 
\begin{tikzpicture}
           \draw (0,0) circle (0.20); 
          \end{tikzpicture}
means $|e_i|=\bar{0}$, while 
\begin{tikzpicture}
           \draw (0,0) circle (0.20);
           \draw (-0.15,0.15)--(0.15,-0.15) (0.15,0.15)--(-0.15,-0.15);
           \end{tikzpicture}
           means $|e_i|=\bar{1}$. 
\begin{table}[h]\label{table:1}
    \centering
    ~\\[-0.4em]
    \begin{tabular}{|m{0.6cm}|m{4.5cm}|m{4cm}|}
        \hline
        ~&~&~\\[-0.8em]
       ~~$\mathfrak{g}$ & \hspace{2.4em}Dynkin diagram $G$ & \hspace{2em}Cartan matrix $A$   \\[0.2em]
        \hline
        ~&~&~\\
            ~$D_{\lambda}$ &
        ~\begin{tikzpicture}
           \draw (0,0) circle (0.22) (0.22,0)--(1.64,0);
           \node at (0,0.6) {$1$};
           \node at (0.93,0.25) {$1$}; 
           \draw (1.86,0) circle (0.22);
           \node at (1.86,0.6) {$2$};  
           \draw (1.70,0.16)--(2.02,-0.16) (1.70,-0.16)--(2.02,0.16);
           \draw (3.72,0) circle (0.22)  (2.08,0)--(3.50,0);
           \node at (3.72,0.6) {$3$};
           \node at (2.79,0.25) {$\lambda$}; 
           \end{tikzpicture} & 
            \hspace{2em}$\begin{array}{c}
                 \begin{pmatrix}
            2 & -1 & 0\\
            1 & 0 & \lambda \\
            0 & -1 & 2
        \end{pmatrix}  \\
                 ~ 
            \end{array}$   \\
        \hline
        ~&~&~\\
            ~$D_{\lambda}^{\circ}$  &
        \hspace{2em}\begin{tikzpicture}
          \draw (0,0.8) circle (0.22)  (0,0.58)--(0,-0.58);
          \draw (-0.16,0.96)--(0.16,0.64) (0.16,0.96)--(-0.16,0.64);
          \node at (-0.6,0.8) {$1$};
          \draw (0,-0.8) circle (0.22);
          \draw (-0.16,-0.96)--(0.16,-0.64) (0.16,-0.96)--(-0.16,-0.64);
          \node at (-0.6,-0.8) {$2$};
          \node at (-0.32,0) {$-1$};
          \draw (1.86,-0.8) circle (0.22);
          \draw (1.70,-0.96)--(2.02,-0.64) (1.70,-0.64)--(2.02,-0.96);
          \draw (1.86,-0.58)--(0.22,0.8)  (0.22,-0.8)--(1.64,-0.8);
          \node at (0.93,-1.05) {$-\lambda$};
          \node at (1.50,0.4) {$1+\lambda$};
          \node at (2.38,-0.8) {$3$};
        \end{tikzpicture}
         & $\begin{array}{cc}
              \begin{pmatrix}
            0 & -1 & 1+\lambda\\
            -1 & 0 & -\lambda \\
            1+\lambda & -\lambda & 0
        \end{pmatrix}  \\
              ~ \\
              ~
         \end{array}$  \\
        \hline
    \end{tabular}
    \caption{Dynkin diagrams and Cartan matrices of type $D(2,1;\lambda)$}
\end{table}
Let $a_{ij}$ denote the $(i,j)$-entry of Cartan matrix $A(D_{\lambda})$ (resp. $A(D_{\lambda}^{\circ})$). The Lie superalgebra $\mathfrak{g}$ subject to the Cartan matrix $A=A(\mathfrak{g})$ has the following defining relations:
\begin{align}\label{LA:1}
    &[h_i,\,h_j]=0,\quad [e_i,\,f_j]=\delta_{ij}h_i, \\ \label{LA:2}
    &[h_i,\,e_j]=a_{ij}e_j, \quad [h_i,\,f_j]=-a_{ij}f_j, \\ \label{LA:3}
    &[e_i,\,e_j]=[f_i,\,f_j]=0,\quad \text{if}~~(\alpha_i,\alpha_j)=0, \\ \label{LA:4}
    &[e_i,\,[e_i,\,e_j]]=[f_i,\,[f_i,\,f_j]]=0,\quad \text{if}~~\mathfrak{g}=D_{\lambda},\ a_{ij}\neq 0,\ \text{and}~~i\neq j, \\ 
    &a_{13}[[e_1,\,e_2],\,e_3]=a_{12}[[e_1,\,e_3],\,e_2],\quad \text{if}~~\mathfrak{g}=D_{\lambda}^{\circ},\nonumber \\ \label{LA:5}
    &a_{13}[[f_1,\,f_2],\,f_3]=a_{12}[[f_1,\,f_3],\,f_2],\quad \text{if}~~\mathfrak{g}=D_{\lambda}^{\circ}.
\end{align}
By its definition, there is a Chevalley involution $\omega$ of $\mathfrak{g}$ such that
    \begin{gather*}
        \omega(e_i)=-f_i,\quad \omega(f_i)=-(-1)^{|i|}e_i,\quad \omega(h_i)=-h_i,\quad i\in I. 
    \end{gather*}

The functions
$\alpha_1,\alpha_2,\alpha_3\in\mathfrak{h}^{\ast}$ are simple roots of $\mathfrak{g}=D_{\lambda}$ (resp. $D_{\lambda}^{\circ}$) such that $\alpha_j(h_i)=a_{ij}$. Let $\mathcal{R}^+$ be the positive root system of $\mathfrak{g}$. Put $\mathcal{R}=\mathcal{R}^+\cup-\mathcal{R}^+$. We also set
$$\mathcal{Q}=\mathbb{Z}\alpha_1\oplus\mathbb{Z}\alpha_2\oplus\mathbb{Z}\alpha_3,\quad \mathcal{Q}^+=\mathbb{Z}_+\alpha_1\oplus\mathbb{Z}_+\alpha_2\oplus\mathbb{Z}_+\alpha_3\ \subset\ \mathfrak{h}^{\ast}. $$
The lattice $\mathcal{Q}$ is called the \textit{root lattice}. Then $\mathfrak{g}$ admits a root space decomposition subject to $\mathfrak{h}$:
\begin{gather*}
    \mathfrak{g}=\left(\oplus_{\alpha\in\mathcal{Q}^+\setminus\{0\}}\mathfrak{g}_{-\alpha}\right)\oplus \mathfrak{h}\oplus \left(\oplus_{\alpha\in\mathcal{Q}^+\setminus\{0\}}\mathfrak{g}_{\alpha}\right),
\end{gather*}
where $\mathfrak{g}_{\alpha}:=\{x\in\mathfrak{g}|[h_i,\,x]=\alpha_i(h)x~~\text{for all}~~i\in I\}$. For any $\alpha\in \mathcal{Q}_+\cup-\mathcal{Q}_+$ and $\alpha\neq 0$, we have
\begin{enumerate}
    \item If $\alpha\notin\mathcal{R}\cup\{0\}$, then $\mathfrak{g}_{\alpha}$ is trivial (i.e. $\dim \mathfrak{g}_{\alpha}=0$). 
    \item If $\alpha\in\mathcal{R}$, then $\dim\mathfrak{g}_{\alpha}=1$. 
\end{enumerate}

We will describe the \textit{Chevalley basis} of $\mathfrak{g}$ in these two cases. Before that, we introduce the root vectors $e_{\alpha},f_{\alpha}$ for $\alpha\in\mathcal{R}^+$ via the following rules. Set for any $\alpha=\sum_{i}m_i\alpha_i\in\mathcal{R}$
\begin{gather*}
    \operatorname{ht}\alpha=\sum_im_i,\quad \operatorname{ms}\alpha=\operatorname{min}\{i|m_i\neq 0\}. 
\end{gather*}
For $\alpha,\beta\in \mathcal{R}^+$, we call $\alpha\prec\beta$ if one of the conditions holds,
\begin{itemize}
    \item[(1)]  $\operatorname{ms}\alpha<\operatorname{ms}\beta$;
    \item[(2)]  $\operatorname{ms}\alpha=\operatorname{ms}\beta$ and $\beta-\alpha\in\mathcal{R}^+$;
    \item[(3)]  $\operatorname{ms}\alpha=\operatorname{ms}\beta$ and $\operatorname{ms}(\alpha-\operatorname{ms}\alpha)<\operatorname{ms}(\beta-\operatorname{ms}\beta)$. 
\end{itemize}
We also call $\alpha\prec\beta$ if $-\beta\prec-\alpha$ for negative roots $\alpha,\beta$. 
Then we have

 \textbf{Case 1}: $\mathfrak{g}=D_{\lambda}$. In this case, 
\begin{gather*}
    \mathcal{R}^+=\{\alpha_1\prec \alpha_1+\alpha_2\prec \alpha_1+\alpha_2+\alpha_3\prec \alpha_1+2\alpha_2+\alpha_3\prec \alpha_2\prec \alpha_2+\alpha_3\prec\alpha_3\}.
\end{gather*}
Denote
\begin{align*}
    &e_{\alpha_i}=e_i,\quad e_{\alpha_1+\alpha_2}=[e_1,\,e_2],\quad e_{\alpha_2+\alpha_3}=[e_2,\,e_3], \\
    &e_{\alpha_1+\alpha_2+\alpha_3}=[[e_1,e_2],\,e_3],\quad e_{\alpha_1+2\alpha_2+\alpha_3}=[[[e_1,e_2],\,e_3],\,e_2].
\end{align*}

 \textbf{Case 2}: $\mathfrak{g}=D_{\lambda}^{\circ}$. In this case, 
\begin{gather*}
    \mathcal{R}^+=\{\alpha_1\prec \alpha_1+\alpha_2\prec\alpha_1+\alpha_3\prec \alpha_1+\alpha_2+\alpha_3\prec \alpha_2\prec \alpha_2+\alpha_3\prec\alpha_3\}.
\end{gather*}
Denote
\begin{align*}
    &e_{\alpha_i}=e_i,\quad e_{\alpha_1+\alpha_2}=[e_1,\,e_2],\quad e_{\alpha_2+\alpha_3}=[e_2,\,e_3], \\
    &e_{\alpha_1+\alpha_3}=[e_1,\,e_3],\quad e_{\alpha_1+\alpha_2+\alpha_3}=[[e_1,e_2],\,e_3].
\end{align*}

Define $f_{\alpha}:=\omega(e_{\alpha})$ for $\alpha\in\mathcal{R}^+$. Therefore, the set $\{e_{\alpha},f_{\alpha},h_i|\alpha\in\mathcal{R}^+,i\in I\}$ forms an ordered basis of $\mathfrak{g}$.

\subsection{Half Casimir element}\label{se:simpleLiesuper:HC}
As known, the Cartan matrix $A$ is always symmetrizable, that is, there is a diagonal matrix $D=\operatorname{diag}\{d_1,d_2,d_3\}$ such that $DA$ is symmetric. Choose
\begin{align*}
    &D_{\lambda}:~~d_1=1,\ d_2=-1,\ d_3=\lambda, \\
    &D_{\lambda}^{\circ}:~~d_1=d_2=d_3=1.
\end{align*}
Recall that \cite[Section 4]{VdL89}, there is a non-degenerate $\mathbb{C}$-value bilinear form $(\,\cdot\,,\,\cdot\,)$ on $\mathfrak{g}$ such that
\begin{enumerate}
    \item $(\,\cdot\,,\,\cdot\,)$ is supersymmetric, i.e., 
    \begin{gather*}
        (x,y)=(-1)^{|x||y|}(y,x)\quad \text{for homogeneous }x,y\in\mathfrak{g};
    \end{gather*}

    \item $(\,\cdot\,,\,\cdot\,)$ is invariant, i.e., 
    \begin{gather*}
        (x,[y,\,z])=([x,\,y],z)\quad \text{for }~~x,y,z\in\mathfrak{g};
    \end{gather*}

    \item The restriction of $(\,\cdot\,,\,\cdot\,)$ to $\mathfrak{h}$ is non-degenerate. Moreover,  $(h_i,h_j)=a_{ij}/d_j$ for $i,j\in I$; 

    \item $(\mathfrak{g}_{\alpha},\mathfrak{g}_{\beta})=0$ if $\alpha\neq -\beta$;
    
    \item $[x,\,y]=(x,y)h_{\alpha}$ for $x\in\mathfrak{g}_{\alpha}$, $y\in \mathfrak{g}_{-\alpha}$, $\alpha\in\mathcal{R}^+$, where $h_{\alpha}=\sum_i m_i d_ih_i$ if $\alpha=\sum_i m_i\alpha_i$. 
\end{enumerate}
This form provides a non-degenerate bilinear form on the root lattice $\mathcal{Q}$ given by
\begin{gather*}
    (\alpha_i,\alpha_j)=d_ia_{ij}=d_id_j(h_i,h_j). 
\end{gather*}

Choose two bases $h^{(k)}$ and $h_{(k)}$of $\mathfrak{h}$ dual each other, i.e. $(h^{(k)},h_{(l)})=\delta_{ij}$ for $i,j\in I$. 
Then for each $i\in I$, we have
\begin{align*}
    &h_i=\sum_k ( h^{(k)},h_i) h_{(k)}=\sum_k ( h_{(k)},h_i) h^{(k)}, \\
    &[h^{(k)},\,e_i]=d_i( h^{(k)},h_i) e_i,\quad [h^{(k)},\,f_i]=-d_i( h^{(k)},h_i) f_i,\\
    &[h_{(k)},\,e_i]=d_i(h_{(k)},h_i) e_i,\quad [h_{(k)},\,f_i]=-d_i( h_{(k)},h_i) f_i.
\end{align*}

Choose a class of suitable pairs $(\xi_{\alpha}^+,\xi_{\alpha}^-)$ such that
$(\xi_{\alpha}^+e_{\alpha},\xi_{\alpha}^-f_{\alpha})=1$ for all $\alpha\in\mathcal{R}^+$. 
Denote $$\tilde{e}_{\alpha}=\xi_{\alpha}^+e_{\alpha},\quad \tilde{f}_{\alpha}=\xi_{\alpha}^-f_{\alpha}.$$ In particular,
$\tilde{e}_{\alpha_i}=e_i,\ \tilde{f}_{\alpha_i}=d_if_i$. 
For any $\alpha=\sum_{i\in I}a_i\alpha_i\in\mathcal{Q}$, we set $|\alpha|=\sum_{i\in I}a_i|i|$. Then we have
\begin{lemma}\label{roots-tensor}
    For $\alpha,\beta\in\mathcal{R}^+$ and $z\in\mathfrak{g}_{\beta-\alpha}$, the following equation holds in $\mathfrak{g}\otimes \mathfrak{g}$,
    \begin{gather*}
        \tilde{f}_{\alpha}\otimes [z,\,\tilde{e}_{\alpha}]=(-1)^{|\alpha|+|\beta|}[\tilde{f}_{\alpha},\,z]\otimes \tilde{e}_{\alpha}.
    \end{gather*}
\end{lemma}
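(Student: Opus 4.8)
The plan is to convert this tensor identity into a comparison of scalars, exploiting that every root space $\mathfrak{g}_{\gamma}$ with $\gamma\in\mathcal{R}$ is at most one-dimensional (property (ii) recorded before the construction of the Chevalley basis). Consequently both brackets in the statement lie in one-dimensional pieces, $[z,\tilde{e}_{\alpha}]\in\mathfrak{g}_{\beta}$ and $[\tilde{f}_{\alpha},z]\in\mathfrak{g}_{\beta-2\alpha}$, so each is a determined scalar multiple of the relevant Chevalley root vector. I would therefore fix bases of the source and target root spaces, write $[z,\tilde{e}_{\alpha}]=c\,\tilde{e}_{\beta}$ and read off the coefficient of $[\tilde{f}_{\alpha},z]$ in the same fashion, reducing the claim to a single scalar equation weighted by the sign $(-1)^{|\alpha|+|\beta|}$. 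The normalization $(\tilde{e}_{\gamma},\tilde{f}_{\gamma})=1$ for all $\gamma\in\mathcal{R}^{+}$ is what makes these scalars accessible.

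The mechanism for extracting the scalars and the sign is the invariant bilinear form. I would pair the two brackets against the appropriate normalized dual vectors and then slide $z$ from one tensor slot to the other using invariance $(x,[y,w])=([x,y],w)$ together with supersymmetry $(x,y)=(-1)^{|x||y|}(y,x)$. Since $\tilde{e}_{\alpha}$ and $\tilde{f}_{\alpha}$ are dual with respect to $(\,\cdot\,,\,\cdot\,)$, this expresses both coefficients through a single pairing involving $z$, and comparing them reproduces precisely the Koszul factor $(-1)^{|\alpha|+|\beta|}$ once one records the parities $|z|=|\alpha|+|\beta|$ and $|\tilde{e}_{\alpha}|=|\tilde{f}_{\alpha}|=|\alpha|$. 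The Chevalley involution $\omega$, with $\omega(e_{i})=-f_{i}$ and $\omega(f_{i})=-(-1)^{|i|}e_{i}$, can be invoked both to transport the computation between the $e$-side and the $f$-side and to pin down the residual sign ambiguity in the choice of the pairs $(\xi_{\alpha}^{+},\xi_{\alpha}^{-})$.

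I expect the principal obstacle to be the disciplined accounting of the $\mathbb{Z}_2$-signs: every interchange of homogeneous factors in $\mathfrak{g}\otimes\mathfrak{g}$ and every commutation of the form past a bracket produces a Koszul sign, and these must accumulate so that the net contribution collapses exactly to $(-1)^{|\alpha|+|\beta|}$ rather than to a spurious parity. As a safeguard, and because the setting is completely explicit, I would fall back on a finite verification: only the finitely many pairs $(\alpha,\beta)$ with $\beta-\alpha\in\mathcal{R}\cup\{0\}$ contribute, and with the seven positive roots and their root vectors listed for both $D_{\lambda}$ and $D_{\lambda}^{\circ}$, every structure constant is known. Organizing the argument as a short case analysis over these two Dynkin realizations then confirms the displayed signs directly and guards against any misplaced parity in the conceptual computation above.
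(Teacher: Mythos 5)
Your proposal is essentially the paper's own proof: the authors give no argument beyond citing the standard computation of \cite[Lemma 2.4]{Ka90}, which is exactly the dual-basis/invariance mechanism you describe (one-dimensional root spaces, scalars extracted by pairing against the supersymmetric invariant form, signs tracked via $|z|=|\alpha|+|\beta|$), so your write-up just supplies the details the paper omits. One caveat that your own weight computation already exposes: since $[\tilde{f}_{\alpha},z]\in\mathfrak{g}_{\beta-2\alpha}$, the right-hand side as printed lies in $\mathfrak{g}_{\beta-2\alpha}\otimes\mathfrak{g}_{\alpha}$ while the left-hand side lies in $\mathfrak{g}_{-\alpha}\otimes\mathfrak{g}_{\beta}$, so the subscripts on the right should read $\beta$ (as in Kac's statement and as the identity is actually used in Lemma \ref{Half-casimir}), and your scalar-matching step only closes once that correction is made.
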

\begin{proof}
    It can be checked as in the usual case; see \cite[Lemma 2.4]{Ka90}. 
    
\end{proof}

Introduce a Casimir element $\Omega$ of $\mathfrak{g}\otimes \mathfrak{g}$:
\begin{gather*}
    \Omega=\sum_{i\in I} h^{(i)}\otimes h_{(i)}+\sum_{\alpha\in\mathcal{R}^+}\tilde{f}_{\alpha}\otimes \tilde{e}_{\alpha}+\sum_{\alpha\in\mathcal{R}^+}(-1)^{|\alpha|}\tilde{e}_{\alpha}\otimes \tilde{f}_{\alpha}.
\end{gather*}
It is easy to see that $\Omega$ is even and satisfies the following properties:
\begin{enumerate}
    \item $[x\otimes 1+1\otimes x,\,\Omega]=0$ for all $x\in\mathfrak{g}$. Moreover, let $\mu$ be the $\mathbb{Z}_2$-graded multiplication map such that
    \begin{gather*}
        \mu(x\otimes y)=(-1)^{|x||y|}xy\quad\text{for all}~~x,y\in \mathfrak{g}. 
    \end{gather*}
    Then $\mu(\Omega)$ is contained in the center of the universal enveloping superalgebra $U(\mathfrak{g})$.  
    \item $\Omega=P(\Omega)$, where $P$ is the $\mathbb{Z}_2$-graded permutation operator such that 
    $$P(x\otimes y)=(-1)^{|x||y|}y\otimes x\quad \text{for homogeneous}~~x,y.$$
\end{enumerate}
The ``half" Casimir element of $\mathfrak{g}\otimes \mathfrak{g}$ is given by
\begin{gather*}
    \Omega^+=\sum_{i\in I} h^{(i)}\otimes h_{(i)}+\sum_{\alpha\in\mathcal{R}^+}\tilde{f}_{\alpha}\otimes \tilde{e}_{\alpha}.
\end{gather*}
Although $\Omega^+$ is not supercommutative with coproducts of the generators $e_i$ $f_i$ and $h_i$, it exhibits the following simple commutation relations. 
\begin{lemma}\label{Half-casimir}
    The following equations hold,
    \begin{align}\label{half-casimir-1}
        &[h_i\otimes 1+1\otimes h_i,\,\Omega^+]=0, \\ \label{half-casimir-2}
        &[e_i\otimes 1+1\otimes e_i,\,\Omega^+]=-e_i\otimes d_ih_i, \\ \label{half-casimir-3}
        &[f_i\otimes 1+1\otimes f_i,\,\Omega^+]=d_ih_i\otimes f_i.
    \end{align}
\end{lemma}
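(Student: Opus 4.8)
The plan is to expand each super-commutator $[x\otimes 1+1\otimes x,\Omega^+]$ by the graded Leibniz rule
\[
[x\otimes 1+1\otimes x,\,a\otimes b]=[x,a]\otimes b+(-1)^{|x||a|}a\otimes[x,b],
\]
and to evaluate the resulting sums term by term on the two pieces of $\Omega^+$, namely the Cartan part $\Omega^0=\sum_k h^{(k)}\otimes h_{(k)}$ and the root part $\sum_{\alpha\in\mathcal{R}^+}\tilde f_\alpha\otimes\tilde e_\alpha$. In each case the stated answer will come from a single ``diagonal'' contribution, while every remaining term cancels.

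For \eqref{half-casimir-1} I would argue by weights: each summand $h^{(k)}\otimes h_{(k)}$ and $\tilde f_\alpha\otimes\tilde e_\alpha$ of $\Omega^+$ has total $\mathfrak h$-weight $0$, so that (using $|h_i|=\bar 0$)
\[
[h_i,\tilde f_\alpha]\otimes\tilde e_\alpha+\tilde f_\alpha\otimes[h_i,\tilde e_\alpha]=\bigl(-\alpha(h_i)+\alpha(h_i)\bigr)\tilde f_\alpha\otimes\tilde e_\alpha=0,
\]
and $\Omega^0$ commutes with $h_i\otimes 1+1\otimes h_i$ outright; thus \eqref{half-casimir-1} holds termwise. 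For \eqref{half-casimir-2} (and \eqref{half-casimir-3} analogously) I would first record the Cartan contribution. Using $[h^{(k)},e_i]=d_i(h^{(k)},h_i)e_i$, its companion for $h_{(k)}$, and the dual-basis expansions $h_i=\sum_k(h^{(k)},h_i)h_{(k)}=\sum_k(h_{(k)},h_i)h^{(k)}$, one gets $[e_i\otimes 1+1\otimes e_i,\Omega^0]=-d_i\,e_i\otimes h_i-d_i\,h_i\otimes e_i$. In the root part, the diagonal term $\alpha=\alpha_i$ contributes $[e_i,\tilde f_{\alpha_i}]\otimes\tilde e_{\alpha_i}=[e_i,d_if_i]\otimes e_i=d_i\,h_i\otimes e_i$, where I use $\tilde f_{\alpha_i}=d_if_i$, $\tilde e_{\alpha_i}=e_i$ and $[e_i,e_i]=0$ (the simple root $\alpha_i$ is even or isotropic, so $2\alpha_i\notin\mathcal R$). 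This cancels the $-d_i\,h_i\otimes e_i$ from $\Omega^0$ and leaves exactly $-d_i\,e_i\otimes h_i=-e_i\otimes d_ih_i$, as claimed. For \eqref{half-casimir-3} the corresponding diagonal term sits in the root part and yields $-d_i\,f_i\otimes h_i$ (via $[f_i,f_i]=0$ and $[f_i,e_i]=-(-1)^{|i|}h_i$), which combines with the Cartan contribution $d_i\,f_i\otimes h_i+d_i\,h_i\otimes f_i$ to give $d_i\,h_i\otimes f_i$.

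The main work — and the step I expect to be most delicate — is the vanishing of the off-diagonal terms. I would group them into pairs indexed by positive roots $\alpha$ and $\beta=\alpha-\alpha_i$ with $\alpha,\beta\in\mathcal R^+$: the first-leg term $[e_i,\tilde f_\alpha]\otimes\tilde e_\alpha$ and the second-leg term $(-1)^{|i||\beta|}\tilde f_\beta\otimes[e_i,\tilde e_\beta]$ lie in the same weight space $\mathfrak g_{-\beta}\otimes\mathfrak g_\alpha$, because $[e_i,\tilde f_\alpha]\in\mathfrak g_{-\beta}$ and $[e_i,\tilde e_\beta]\in\mathfrak g_\alpha$ are each one-dimensional. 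Writing $[e_i,\tilde f_\alpha]=c''\tilde f_\beta$ and $[e_i,\tilde e_\beta]=c\,\tilde e_\alpha$, the invariance and supersymmetry of $(\cdot,\cdot)$ together with the normalisations $(\tilde e_\gamma,\tilde f_\gamma)=1$ (whence $(\tilde f_\gamma,\tilde e_\gamma)=(-1)^{|\gamma|}$) force the structure-constant identity $c''=-(-1)^{|i||\beta|}c$, which is exactly the content packaged by Lemma \ref{roots-tensor}; hence each pair cancels. The structural fact that isolates the diagonal term is that $\alpha_i$ is simple and so cannot be written as a sum of two positive roots, so $\alpha=\alpha_i$ has no partner and survives.

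I expect the only genuine friction to be keeping the $\mathbb Z_2$-signs consistent across the Leibniz rule, the super-antisymmetry $[\tilde f_\gamma,e_i]=-(-1)^{|\gamma||i|}[e_i,\tilde f_\gamma]$, and the invariance identities; this bookkeeping is routine in the sense of \cite{Ka90}. As an independent check I would invoke the already-established invariance $[x\otimes 1+1\otimes x,\Omega]=0$: since $\Omega=\Omega^++\sum_{\alpha}(-1)^{|\alpha|}\tilde e_\alpha\otimes\tilde f_\alpha$, this gives $[x\otimes 1+1\otimes x,\Omega^+]=-\bigl[x\otimes 1+1\otimes x,\sum_{\alpha}(-1)^{|\alpha|}\tilde e_\alpha\otimes\tilde f_\alpha\bigr]$, and recomputing the right-hand side isolates the same diagonal contribution, confirming \eqref{half-casimir-2}–\eqref{half-casimir-3}.
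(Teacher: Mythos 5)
Your proposal is correct and follows essentially the same route as the paper: expand by the graded Leibniz rule, kill \eqref{half-casimir-1} termwise by weights, isolate the Cartan and diagonal ($\alpha=\alpha_i$) contributions explicitly, and cancel the off-diagonal root terms via Lemma \ref{roots-tensor}. The paper phrases that last cancellation as rewriting $\sum_{\alpha\neq\alpha_i}[e_i,\tilde f_\alpha]\otimes\tilde e_\alpha=-[1\otimes e_i,\Omega^+]+d_ih_i\otimes e_i$ rather than pairing $\alpha$ with $\alpha-\alpha_i$, but this is only a reorganization of the same computation.
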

\begin{proof}
    For relation \eqref{half-casimir-1}, we have
    \begin{gather*}
        [h_i\otimes 1+1\otimes h_i,\,\Omega^+]=\sum_{\alpha\in\mathcal{R}^+}\left([h_i,\,\tilde{f}_{\alpha}]\otimes \tilde{e}_{\alpha}+\tilde{f}_{\alpha}\otimes [h_i,\,\tilde{e}_{\alpha}]\right)=0.
    \end{gather*}

    We now check relation \eqref{half-casimir-2} and a similar calculation can be used to \eqref{half-casimir-3}. In fact, 
    \begin{align*}
        [e_i\otimes 1,\,\Omega^+]&=\sum_{j\in I}[e_i,\,h^{(j)}]\otimes h_{(j)}+\sum_{\alpha\in\mathcal{R}^+}[e_i,\,\tilde{f}_{\alpha}]\otimes \tilde{e}_{\alpha} \\
        &=-e_i\otimes d_ih_i+d_ih_i\otimes e_i+\sum_{\alpha\in\mathcal{R}^+\setminus\{\alpha_i\}}[e_i,\,\tilde{f}_{\alpha}]\otimes \tilde{e}_{\alpha} \\
        &=-e_i\otimes d_ih_i+d_ih_i\otimes e_i-\sum_{\alpha\in\mathcal{R}^+\setminus\{\alpha_i\}}(-1)^{|\alpha||\alpha_i|}\tilde{f}_{\alpha}\otimes [e_i,\,\tilde{e}_{\alpha}] \\
        &=-e_i\otimes d_ih_i-[1\otimes e_i,\,\Omega^+], 
    \end{align*}
by Lemma \ref{roots-tensor}. 
    
\end{proof}
The element $\Omega^+$ will be used in Section \ref{se:hopfstructure}.

\subsection{Affine Lie superalgebras and current Lie superalgebras}\label{se:simpleLiesuper:AF}
Let $\widehat{\mathfrak{g}}$ be the affinization of $\mathfrak{g}$. Put $\hat{I}=I\cup \{0\}$. The affine Lie superalgebra $\widehat{\mathfrak{g}}$ possess the generators $\{e_i,f_i,h_i|i\in \hat{I}\}$ with the Serre presentation constructed in \cite[Section 5]{Ya99}. Here, $\widehat{\mathfrak{g}}$ is associated with the following affine Cartan matrix $\widehat{A}(\mathfrak{g})=(a_{ij})_{i,j\in\hat{I}}$, which can be obtained from $A(\mathfrak{g})$ by adding $0$-th row and $0$-th column. 
\begin{equation*}
    \widehat{A}(D_{\lambda})=\begin{pmatrix}
        2 & 0 & -1 & 0 \\
        0 & 2 & -1 & 0 \\
        -1-\lambda & 1 & 0 & \lambda \\
        0 & 0 & -1 & 2
    \end{pmatrix},
    \qquad \widehat{A}(D_{\lambda}^{\circ})=\begin{pmatrix}
        0 & -\lambda & 1+\lambda & -1 \\
        -\lambda & 0 & -1 & 1+\lambda \\
        1+\lambda & -1 & 0 & -\lambda \\
        -1 & 1+\lambda & -\lambda & 0
    \end{pmatrix}.
\end{equation*}

Within the framework of \cite{VdL89}, $\widehat{\mathfrak{g}}$ also can also be defined as the central $\mathfrak{c}$ extension of the loop superalgebra $\mathcal{L}\mathfrak{g}:=\mathfrak{g}\otimes \mathbb{C}[t,t^{-1}]$ (without derivation) with the supercommutator
\begin{gather}\label{supercommutator}
   [x^{(r)}+k\mathfrak{c},\,y^{(s)}+l\mathfrak{c}]=[x,\,y]^{(r+s)}+r\delta_{r,-s}(x,y)\mathfrak{c}, 
\end{gather}
where $x,y\in \mathfrak{g}$, $k,l\in\mathbb{Z}$, and we write $z^{(r)}=z\otimes t^r$ for $z\in\mathfrak{g}$ and $r\in\mathbb{Z}$. The following proposition provides the equivalence between the two definitions of affinization of $\mathfrak{g}$ proved in \cite[Theorem 6.9]{VdL89}.
\begin{proposition}\label{Iso-AFg}
    The mapping $\imath$:
    \begin{align*}
    &e_i\mapsto e_i^{(0)},\quad f_i\mapsto f_i^{(0)},\quad h_i\mapsto h_i^{(0)},\quad i\in I, \\
    &e_0\mapsto f_{\theta}^{(1)},\quad f_0\mapsto c_0e_{\theta}^{(-1)},\quad h_0\mapsto d_0^{-1}\mathfrak{c}-\sum_{i\in I}c_ih_i^{(0)},
    \end{align*}
is an isomorphism between the superalgebras $\widehat{\mathfrak{g}}$ and $\mathcal{L}\mathfrak{g}\oplus \mathbb{Z}\mathfrak{c}$, where  
$\theta$ is the highest root of $\mathfrak{g}$ and 
\begin{align*}
    &c_0=d_0^2,\ c_1=\frac{d_1}{d_0},\ c_2=\frac{2d_2}{d_0},\ c_3=\frac{d_3}{d_0},~~~~\text{for}~~\mathfrak{g}=D_{\lambda}~~\text{and}~~d_0=-1-\lambda, \\
    &c_i=d_i=1,~~~~\text{for}~~\mathfrak{g}=D_{\lambda}^{\circ}~~\text{and}~~i\in \hat{I}. 
\end{align*}
\end{proposition}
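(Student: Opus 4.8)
The plan is to establish the isomorphism in two steps: first show that $\imath$ extends to a homomorphism of Lie superalgebras, i.e.\ that the images of the affine Chevalley generators satisfy the relations \eqref{LA:1}--\eqref{LA:5} attached to $\widehat{A}(\mathfrak{g})$, and then show that this homomorphism is bijective. For the first step, all relations indexed entirely by $I$ hold automatically, since $\imath$ restricts to the canonical embedding $\mathfrak{g}\cong\mathfrak{g}^{(0)}\hookrightarrow\mathcal{L}\mathfrak{g}$, under which \eqref{supercommutator} recovers the bracket of $\mathfrak{g}$ on the degree-zero slice. The content therefore lies in the relations involving the affine node $0$, which I would verify directly from \eqref{supercommutator} together with properties (iii)--(v) of the invariant form. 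Concretely, since $f_\theta\in\mathfrak{g}_{-\theta}$ one has $[h_i^{(0)},f_\theta^{(1)}]=-\theta(h_i)f_\theta^{(1)}$, so that $[h_i,e_0]=a_{i0}e_0$ amounts to the identity $a_{i0}=-\theta(h_i)$ recorded in $\widehat{A}(\mathfrak{g})$; the relation $[h_0,e_j]=a_{0j}e_j$ becomes $-\sum_{i\in I}c_ia_{ij}=a_{0j}$, which is exactly the defining property of the constants $c_i$; and the relations $[e_0,f_j]=[f_\theta,f_j]^{(1)}=0$ together with the affine Serre relations reduce to the vanishing of suitable root vectors in $\mathfrak{g}$ (e.g.\ $\theta\pm\alpha_j\notin\mathcal{R}$), hence to finite computations inside the finite-dimensional $\mathfrak{g}$.

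The crux, and the step I expect to be the main obstacle, is the bracket $[e_0,f_0]=h_0$. Expanding via \eqref{supercommutator} gives
\begin{gather*}
[e_0,f_0]=c_0\,[f_\theta,e_\theta]^{(0)}+c_0\,(f_\theta,e_\theta)\,\mathfrak{c},
\end{gather*}
and by property (v) one has $[f_\theta,e_\theta]=\pm(e_\theta,f_\theta)h_\theta$ with $h_\theta=\sum_{i\in I}m_id_ih_i$, where $\theta=\sum_im_i\alpha_i$. Matching the Cartan part forces $c_i=m_id_i/d_0$, while matching the central coefficient forces $c_0(f_\theta,e_\theta)=d_0^{-1}$; combined with $c_0=d_0^2$ this pins down the normalization $(f_\theta,e_\theta)=d_0^{-3}$ of the (unnormalized) extremal root vectors. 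The genuine work here is twofold: computing $(f_\theta,e_\theta)$ explicitly from the iterated-bracket definition of $e_\theta$ and $f_\theta=\omega(e_\theta)$ in each Dynkin realization, and tracking the $\mathbb{Z}_2$-signs. Note in particular that $\theta$ is even for $D_\lambda$ but odd for $D_\lambda^\circ$, so that in the latter case the supersymmetry $(f_\theta,e_\theta)=-(e_\theta,f_\theta)$ is precisely what reconciles the signs of the central and Cartan contributions. I would therefore carry out this verification separately for $D_\lambda$ (using $m=(1,2,1)$, $d_0=-1-\lambda$) and for $D_\lambda^\circ$ (using $m=(1,1,1)$, $d_0=1$).

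For the second step, surjectivity of $\imath$ is clear: its image contains $\mathfrak{g}^{(0)}$ together with $f_\theta^{(1)}=\imath(e_0)$ and $e_\theta^{(-1)}=\imath(c_0^{-1}f_0)$. Since $\mathfrak{g}$ is simple, a $\mathfrak{g}$-submodule of its adjoint module is an ideal, so the adjoint module is irreducible; hence the $\mathfrak{g}^{(0)}$-submodule generated by $f_\theta^{(1)}$ (resp.\ $e_\theta^{(-1)}$) is all of $\mathfrak{g}^{(1)}$ (resp.\ $\mathfrak{g}^{(-1)}$). As $\mathfrak{g}$ is perfect, induction on the loop degree via $\mathfrak{g}^{(r+1)}=[\mathfrak{g}^{(1)},\mathfrak{g}^{(r)}]$ then yields every $\mathfrak{g}^{(r)}$, while the central $\mathfrak{c}$ appears in $[e_0,f_0]$.

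For injectivity I would use the $\mathbb{Z}$-grading by loop degree on $\mathcal{L}\mathfrak{g}\oplus\mathbb{C}\mathfrak{c}$ and the corresponding homogeneous grading on $\widehat{\mathfrak{g}}$, with respect to which $\imath$ is grading-preserving. Each graded component of the target is finite-dimensional, of dimension $\dim\mathfrak{g}$ (plus the one-dimensional centre in degree $0$), so a surjective grading-preserving map is automatically an isomorphism once one knows that $\widehat{\mathfrak{g}}$ has no larger graded components, i.e.\ that the Serre presentation carries no extra Gabber--Kac-type kernel. This last structural input is exactly what is established in \cite{VdL89}, which I would invoke to complete the argument.
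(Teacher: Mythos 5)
Your proposal is sound in outline, but it takes a genuinely different route from the paper, which offers no proof at all: it simply attributes the proposition to \cite[Theorem 6.9]{VdL89}. You instead sketch the standard direct verification, and your individual checks are correct as far as they go --- the identities $a_{i0}=-\theta(h_i)$ and $a_{0j}=-\sum_i c_ia_{ij}$ do hold for both $\widehat{A}(D_\lambda)$ (with $\theta=\alpha_1+2\alpha_2+\alpha_3$) and $\widehat{A}(D_\lambda^\circ)$ (with $\theta=\alpha_1+\alpha_2+\alpha_3$), and your analysis of $[e_0,f_0]=h_0$ correctly isolates the normalization condition $c_0(f_\theta,e_\theta)=d_0^{-1}$, i.e.\ $(f_\theta,e_\theta)=d_0^{-3}$, as the crux. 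Two caveats temper the comparison. First, you defer rather than perform the one computation on which the whole homomorphism check hinges, namely evaluating $(f_\theta,e_\theta)$ for the specific iterated-bracket root vectors of Section \ref{se:simpleLiesuper:DG} with $f_\theta=\omega(e_\theta)$; you also gloss over the fact that the affine Serre presentation of $\widehat{\mathfrak{g}}$ from \cite{Ya99} contains higher-order relations (analogues of \eqref{DJ:6}--\eqref{DJ:7}) that the paper never writes down, so ``finite computations inside $\mathfrak{g}$'' is true in principle but is where the real work lives. Second, your injectivity argument ultimately invokes the Gabber--Kac-type statement from \cite{VdL89} to bound the graded components of $\widehat{\mathfrak{g}}$, so the proposal does not actually replace the paper's citation --- it reorganizes the proof so that only the ``no extra kernel'' input is outsourced, with the homomorphism and surjectivity parts made explicit. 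What your approach buys is transparency about exactly which constants $c_i$, $c_0$, $d_0$ are forced and why (namely $c_i=m_id_i/d_0$ from the Cartan part and $c_0=d_0^2$ from the central part); what the paper's approach buys is brevity, at the cost of leaving the reader to trust that Van de Leur's normalizations match the ones chosen here.
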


The universal enveloping algebra of the polynomial current Lie superalgebra $\mathfrak{g}[u]$ can be viewed as a limit form of the universal enveloping algebra of the loop superalgebra $\mathcal{L}\mathfrak{g}$; one may refer to the usual case in reference \cite[Section 2.1]{LWZ25}. Define recursively for $z\in\mathfrak{g}$, $r\in\mathbb{Z}$ and $m\in\mathbb{Z}_+$,
\begin{gather*}
    z^{(r,0)}:=z^{(r)},\quad z^{(r,m+1)}:=z^{(r+1,m)}-z^{(r,m)}.
\end{gather*}
It follows that
\begin{gather}\label{loop-recurse-eq1}
    z^{(r,m)}=\sum_{k=0}^m(-1)^{m-k}\binom{m}{k}z^{(r)}=z\otimes t^r(t-1)^m\in\mathcal{L}\mathfrak{g}. 
\end{gather}
Clearly, the set of all elements $z^{(r,m)}$ spans $\mathcal{L}\mathfrak{g}$. Introduce a filtration
\begin{gather*}
    \mathsf{w}_0\supset \mathsf{w}_1\supset\mathsf{w}_2\supset \cdots\supset \mathsf{w}_m\supset \cdots
\end{gather*}
on the enveloping algebra $U(\mathcal{L}\mathfrak{g})$ by setting $\deg z^{(r,m)}=m$ and $\deg (xy)=\deg x+\deg y$ for homogeneous $x,y\in U(\mathcal{L}\mathfrak{g})$, where each $\mathsf{w}_m$ ($m\in\mathbb{Z}_+$) denotes the span of the monomials
\begin{gather*}
    z_1^{(r_1,m_1)}z_2^{(r_2,m_2)}\cdots z_t^{(r_p,m_p)}
\end{gather*}
for $z_i\in\mathfrak{g}$, $r_i\in\mathbb{Z}$, $m_i\in\mathbb{Z}_+$, $1\leqslant i\leqslant p$, $p\geqslant 0$ such that $m_1+m_2+\cdots +m_p\geqslant m$. The graded algebra associated with this filtration is given by
\begin{gather*}
    \operatorname{gr}_{\mathsf{w}}U(\mathcal{L}\mathfrak{g}):=\bigoplus_{m=0}^{\infty} \mathsf{w}_m/\mathsf{w}_{m+1}.
\end{gather*}
By relation \eqref{loop-recurse-eq1}, we have for $z\in\mathfrak{g}$, $m\in\mathbb{Z}_+$, 
\begin{gather}\label{loop-recurse-eq2}
    z^{(r,m)}\equiv z^{(0,m)}\quad \text{mod}~~\mathsf{w}_{m+1},\quad \forall r\in\mathbb{Z}.
\end{gather}

\begin{proposition}
    There exists an isomorphism $\varsigma$ of superalgebras between $U(\mathfrak{g}[u])$ and $\operatorname{gr}_{\mathsf{w}}U(\mathcal{L}\mathfrak{g})$ such that
    \begin{gather*}
        z\otimes u^m\mapsto \bar{z}^{(0,m)},\quad z\in\mathfrak{g},\ m\in\mathbb{Z}_+.
    \end{gather*}
    Here, $\bar{z}^{(0,m)}$ denotes the image of $z^{(0,m)}$ on the component $\mathsf{s}_m/\mathsf{s}_{m+1}$.
\end{proposition}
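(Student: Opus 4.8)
The plan is to realise $\varsigma$ as a homomorphism of graded superalgebras, deduce surjectivity from the congruence \eqref{loop-recurse-eq2}, and reduce injectivity to a Poincar\'e--Birkhoff--Witt (PBW) argument for $U(\mathcal{L}\mathfrak{g})$. First I would build $\varsigma$ from the universal property of the enveloping superalgebra: it suffices to exhibit a homomorphism of Lie superalgebras $\mathfrak{g}[u]\to\operatorname{gr}_{\mathsf{w}}U(\mathcal{L}\mathfrak{g})$, $z\otimes u^m\mapsto\bar z^{(0,m)}$, and check that it respects the bracket. Using \eqref{loop-recurse-eq1} to write $z^{(0,m)}=z\otimes(t-1)^m$, a one-line computation in $\mathcal{L}\mathfrak{g}$ gives $[z^{(0,a)},w^{(0,b)}]=[z,w]\otimes(t-1)^{a+b}=[z,w]^{(0,a+b)}$, an element lying in $\mathsf{w}_{a+b}$ whose class in $\mathsf{w}_{a+b}/\mathsf{w}_{a+b+1}$ equals $\overline{[z,w]}^{(0,a+b)}$. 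Hence $[\bar z^{(0,a)},\bar w^{(0,b)}]=\overline{[z,w]}^{(0,a+b)}$ in the associated graded, which is exactly the image of the current-algebra relation $[z\otimes u^a,w\otimes u^b]=[z,w]\otimes u^{a+b}$. Declaring $\deg(z\otimes u^m)=m$ turns $U(\mathfrak{g}[u])$ into a $\mathbb{Z}_+$-graded superalgebra and $\varsigma$ into a graded homomorphism.

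For surjectivity, the component $\mathsf{w}_m/\mathsf{w}_{m+1}$ is spanned by classes of monomials $z_1^{(r_1,m_1)}\cdots z_p^{(r_p,m_p)}$ with $m_1+\cdots+m_p=m$. By \eqref{loop-recurse-eq2}, replacing each factor $z_i^{(r_i,m_i)}$ by $z_i^{(0,m_i)}$ alters such a monomial only modulo $\mathsf{w}_{m+1}$, so its class equals $\bar z_1^{(0,m_1)}\cdots\bar z_p^{(0,m_p)}=\varsigma\!\big((z_1\otimes u^{m_1})\cdots(z_p\otimes u^{m_p})\big)$. Thus every graded component, and hence all of $\operatorname{gr}_{\mathsf{w}}U(\mathcal{L}\mathfrak{g})$, lies in the image.

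The heart of the argument is injectivity, which I would obtain from PBW. The key observation is that the filtration degree of $z\otimes f$ is the order of vanishing of $f$ at $t=1$, so $\deg z^{(r,m)}=m$ for every $r$; consequently $\mathsf{w}_\bullet$ is precisely the algebra filtration on $U(\mathcal{L}\mathfrak{g})$ induced by the Lie-superalgebra filtration of $\mathcal{L}\mathfrak{g}$ by order at $t=1$. Passing to the associated graded collapses the loop variable: one finds $\operatorname{gr}\mathcal{L}\mathfrak{g}\cong\mathfrak{g}[u]$ with $u=\overline{t-1}$ and homogeneous basis $\{\bar x_a^{(0,m)}\}$, the negative powers $x_a^{(r)}$ ($r<0$) all having degree $0$ and collapsing onto $\bar x_a^{(0,0)}$. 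By the PBW theorem for filtered Lie superalgebras, $\operatorname{gr}_{\mathsf{w}}U(\mathcal{L}\mathfrak{g})\cong U(\operatorname{gr}\mathcal{L}\mathfrak{g})\cong U(\mathfrak{g}[u])$, and a PBW basis of the middle algebra is the set of ordered monomials in $\{\bar x_a^{(0,m)}\}$ (even factors to arbitrary powers, odd factors square-free, in the fixed order of the Chevalley basis of Section \ref{se:simpleLiesuper:DG}). This is exactly the image under $\varsigma$ of the PBW basis of $U(\mathfrak{g}[u])$; since $\varsigma$ carries a basis bijectively to a basis, it is an isomorphism.

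I expect the main obstacle to be this injectivity step, specifically justifying $\operatorname{gr}_{\mathsf{w}}U(\mathcal{L}\mathfrak{g})\cong U(\operatorname{gr}\mathcal{L}\mathfrak{g})$ in the $\mathbb{Z}_2$-graded setting: one must verify that the order filtration on $\mathcal{L}\mathfrak{g}$ is exhaustive and separated and that its bracket is strictly degree-additive, and---most delicately---that the distinguished ordered monomials in the $\bar x_a^{(0,m)}$ suffer no unexpected linear collapse in the associated graded. The homomorphism property and surjectivity are comparatively routine.
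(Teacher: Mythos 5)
Your proposal is correct and follows essentially the same route as the paper: surjectivity via the bracket identity $[z^{(0,a)},w^{(0,b)}]=[z,w]^{(0,a+b)}$ together with the congruence \eqref{loop-recurse-eq2}, and injectivity by matching PBW bases of $U(\mathfrak{g}[u])$ and $U(\mathcal{L}\mathfrak{g})$. The one delicate point you flag---that the ordered monomials in the $\bar z^{(0,m)}$ suffer no collapse in $\operatorname{gr}_{\mathsf{w}}U(\mathcal{L}\mathfrak{g})$, i.e.\ that the decreasing, non-split filtration still gives $\operatorname{gr}_{\mathsf{w}}U(\mathcal{L}\mathfrak{g})\cong U(\operatorname{gr}\mathcal{L}\mathfrak{g})$---is precisely the detail the paper also leaves implicit, deferring to the PBW theorems of the two enveloping superalgebras.
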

\begin{proof}
    The mapping $\varsigma$ is an epimorphism as a direct consequence of \eqref{supercommutator} and \eqref{loop-recurse-eq2}. Furthermore, the injectivity of $\varsigma$ can be deduced from the PBW theorem of $U(\mathfrak{g}[u])$ and $U(\mathcal{L}\mathfrak{g})$. 
    
\end{proof}

\medskip

\section{Quantum affine(loop) superalgebras of $D(2,1;\lambda)$}\label{se:quantumAFsuper}
As one of the main results of Heckenberger-Spill-Torrielli-Yamane's work \cite{HSTY08}, quantum affine superalgebras of $D(2,1;\lambda)$ have two equivalent presentations---Drinfeld-Jimbo presentation and Drinfeld (second) presentation. In this section, we will review the isomorphism theorem between these two presentations and construct a PBW basis in terms of Drinfeld current generators. 

Given $\hbar\in\mathbb{C}\setminus\mathbb{Z}\pi\sqrt{-1}$. denote for $n\in\mathbb{C}$,
\begin{gather*}
    q^n:=\exp(n\hbar),\quad [n]_q:=\frac{q^n-q^{-n}}{q-q^{-1}}. 
\end{gather*}
In particular, set $q=q^1$. From here to the end of this paper, assume that 
\begin{gather*}
    q^{kn}\neq 1,\quad  n\in\{1,\lambda,\lambda+1\},\ k\in \mathbb{N}.
\end{gather*}
Introduce the super $q$-bracket $[\,\cdot\,,\,\cdot\,]_a$ for $a\in\mathbb{C}(q)\setminus\{0\}$ by
\begin{gather*}
    [x,\,y]_a=xy-(-1)^{|x||y|}ayx
\end{gather*}
for homogeneous $x,y$. 
Clearly, $[x,\,y]_1=[x,\,y]$.

\subsection{Drinfeld-Jimbo presentation $U_q(\widehat{\mathfrak{g}})$}\label{se:quantumAFsuper:DJ}
We begin with the definition of the quantum affine superalgebra associated with Chevalley generators, i.e. Drinfeld-Jimbo presentation. The complete Serre-like relations follow from \cite{Ya99}. 
\begin{definition}
    The \textit{quantum affine superalgebra} $U_q(\widehat{\mathfrak{g}})$ is an associative superalgebra over $\mathbb{C}(q)$ generated by the elements
    \begin{gather*}
        E_i,\ F_i,\ K_i^{\pm 1}:=\exp(H_i\hbar)\quad\text{for}~~i\in \hat{I},
    \end{gather*}
    with $\mathbb{Z}_2$-grading $|E_i|=|F_i|=|i|$ and $|K_i^{\pm 1}|=\bar{0}$. The defining relations are given as follows.
    \begin{align}\label{DJ:1}
        &K_i^{\pm 1}K_i^{\mp 1}=1,\quad K_iK_j=K_jK_i, \\ \label{DJ:2}
        &K_iE_jK_i^{-1}=q^{(\alpha_i,\alpha_j)}E_j,\quad K_iF_jK_i^{-1}=q^{-(\alpha_i,\alpha_j)}F_j, \\ \label{DJ:3}
        &[E_i,\,F_j]=\delta_{ij}\frac{K_i-K_i^{-1}}{q-q^{-1}}, \\ \label{DJ:4}
        &[E_i,\,E_j]=[F_i,\,F_j]=0,\quad \text{if}~~(\alpha_i,\alpha_j)=0, \\ \label{DJ:5}
        &[\![E_i,\,[\![E_i,\,E_j]\!] ]\!]=[\![F_i,\,[\![F_i,\,F_j]\!] ]\!]=0,\quad \text{if}~~\mathfrak{g}=D_{\lambda},\ (\alpha_i,\alpha_j)\neq 0,\ \text{and}~~ i\neq j, \\ \label{DJ:6}
        &[\![  [\![ [\![E_2,\,E_0]\!],\,[\![E_2,\,E_1]\!]  ]\!]  ,\,[\![E_2,\,E_3]\!] ]\!] =[\lambda]_q[\![  [\![ [\![E_2,\,E_0]\!],\,[\![E_2,\,E_3]\!]  ]\!]  ,\,[\![E_2,\,E_1]\!] ]\!],\quad \text{if}~~\mathfrak{g}=D_{\lambda}, \\  \label{DJ:7}
        &[\![  [\![ [\![F_2,\,F_0]\!],\,[\![F_2,\,F_1]\!]  ]\!]  ,\,[\![F_2,\,F_3]\!] ]\!] =[\lambda]_q[\![  [\![ [\![F_2,\,F_0]\!],\,[\![F_2,\,F_3]\!]  ]\!]  ,\,[\![F_2,\,F_1]\!] ]\!],\quad \text{if}~~\mathfrak{g}=D_{\lambda}, \\ \label{DJ:8}
        &[(\alpha_i,\alpha_k)]_q[\![ [\![E_i,\,E_j]\!],\,E_k]\!]=[(\alpha_i,\alpha_j)]_q[\![ [\![E_i,\,E_k]\!],\,E_j]\!],\quad \text{if}~~\mathfrak{g}=D_{\lambda}^{\circ},\ i<j<k, \\ \label{DJ:9}
        &[(\alpha_i,\alpha_k)]_q[\![ [\![F_i,\,F_j]\!],\,F_k]\!]=[(\alpha_i,\alpha_j)]_q[\![ [\![F_i,\,F_k]\!],\,F_j]\!],\quad \text{if}~~\mathfrak{g}=D_{\lambda}^{\circ},\ i<j<k, 
    \end{align}
    where $[\![x,\,y]\!]:=[x,\,y]_{q^{-(\alpha,\beta)}}$ if $K_ixK_i^{-1}=q^{(\alpha_i,\alpha)}x$ and $K_iyK_i^{-1}=q^{(\alpha_i,\beta)}y$. 
\end{definition}

Let $\widehat{\mathfrak{h}}$ be the Cartan subalgebra and $\delta$ the null root of $\widehat{\mathfrak{g}}$. Put $\alpha_0=\delta-\theta$ and $\widehat{\mathcal{Q}}=\sum_{i\in \hat{I}}\mathbb{Z}\alpha_i$. We can extend the notation $K_{\alpha}$ for all $\alpha\in\widehat{\mathcal{Q}}$ by
\begin{gather*}
    K_{\alpha}=\prod_{i\in\hat{I}}K_i^{m_i},\quad \text{if}~~\alpha=\sum_{i\in\hat{I}}m_i\alpha_i\in\widehat{\mathcal{Q}}.
\end{gather*}
Note that $K_{\delta}^{\pm 1}$ are central elements of $U_q(\widehat{\mathfrak{g}})$. 

The superalgebra $U_q(\widehat{\mathfrak{g}})$ is a Hopf superalgebra endowed with coproduct $\triangle_q:\ U_q(\widehat{\mathfrak{g}})\rightarrow U_q(\widehat{\mathfrak{g}})\otimes U_q(\widehat{\mathfrak{g}})$, counit $\epsilon_q:\ U_q(\widehat{\mathfrak{g}})\rightarrow \mathbb{C}$ and antipode $\mathcal{S}_q:\ U_q(\widehat{\mathfrak{g}})\rightarrow U_q(\widehat{\mathfrak{g}})$ given by
\begin{align*}
    &\triangle_q(K_i^{\pm 1})=K_i^{\pm 1}\otimes K_i^{\pm 1},\quad \triangle_q(K_{\delta}^{\pm 1})=K_{\delta}^{\pm 1}\otimes K_{\delta}^{\pm 1}, \\
    &\triangle_q(E_i)=E_i\otimes 1+K_i\otimes E_i,\quad \triangle_q(F_i)=F_i\otimes K_i^{-1}+1\otimes F_i, \\
    &\epsilon_q(K_i^{\pm 1})=\epsilon_q(K_{\delta}^{\pm 1})=1,\quad \epsilon_q(E_i)=\epsilon_q(F_i)=0, \\
    &\mathcal{S}_q(E_i)=-K_i^{-1}E_i,\quad \mathcal{S}_q(F_i)=-F_iK_i,\quad \mathcal{S}_q(K_i)=K_i^{-1},\quad \mathcal{S}_q(K_{\delta})=K_{\delta}^{-1}.
\end{align*}

\begin{remark}
    Here, we do not need to use Yamane's extension of $U_q(\widehat{\mathfrak{g}})$ as presented in \cite[Section 6.1]{Ya99} since we employ the $\mathbb{Z}_2$-graded tensor product multiplication rule.
\end{remark}

$U_q(\widehat{\mathfrak{g}})$ can be regarded as a $\mathbb{C}[[\hbar]]$-associative superalgebra with generators $H_i$, $E_i$, $F_i$ for $i\in \hat{I}$, which allows us to specialize $\hbar$. We denote it by $U_{\hbar}(\widehat{\mathfrak{g}})$. From the equivalent statement of \cite[Lemma 6.6.1]{Ya99}, it follows that
\begin{proposition}\label{special:DJ}
    There is an isomorphism of superalgebras between the quotient $U_{\hbar}(\widehat{\mathfrak{g}})/\hbar U_{\hbar}(\widehat{\mathfrak{g}})$ and $U(\widehat{\mathfrak{g}})$ such that
    \begin{gather*}
        \bar{E}_i\mapsto e_i,\quad \bar{F}_i\mapsto d_if_i,\quad \bar{H_i}\mapsto d_ih_i,\quad i\in\hat{I},
    \end{gather*}
    where $\bar{x}$ denotes the image of $x$ on $U_{\hbar}(\widehat{\mathfrak{g}})/\hbar U_{\hbar}(\widehat{\mathfrak{g}})$. 
    Moreover, $U_{\hbar}(\widehat{\mathfrak{g}})$ is isomorphic to $U(\widehat{\mathfrak{g}})[[\hbar]]$ as $\mathbb{C}[[\hbar]]$-module. 
\end{proposition}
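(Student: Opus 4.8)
The plan is to realise $U_{\hbar}(\widehat{\mathfrak g})$ as a topologically free $\mathbb C[[\hbar]]$-algebra presented by the generators $H_i,E_i,F_i$ ($i\in\hat I$) with the relations \eqref{DJ:1}--\eqref{DJ:9} rewritten through the substitution $K_i=\exp(\hbar H_i)$, and then to analyse these relations modulo $\hbar$. Concretely, I would first define a $\mathbb C[[\hbar]]$-algebra homomorphism $\Psi\colon U_{\hbar}(\widehat{\mathfrak g})\to U(\widehat{\mathfrak g})$, where $U(\widehat{\mathfrak g})$ is viewed as a $\mathbb C[[\hbar]]$-algebra on which $\hbar$ acts as $0$, determined on generators by $E_i\mapsto e_i$, $F_i\mapsto d_if_i$, $H_i\mapsto d_ih_i$. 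Since $\Psi(\hbar)=0$, the map factors through the quotient $U_{\hbar}(\widehat{\mathfrak g})/\hbar U_{\hbar}(\widehat{\mathfrak g})$, and it is visibly surjective because $e_i,f_i,h_i$ lie in its image. The two remaining tasks are to show that $\Psi$ is well defined (the quantum relations degenerate to the defining relations of $U(\widehat{\mathfrak g})$) and that the induced map on the quotient is injective.

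For well-definedness I would process each family \eqref{DJ:1}--\eqref{DJ:9} separately and record its $\hbar^0$-term. Relation \eqref{DJ:1} yields $[H_i,H_j]=0$, while expanding $K_i=\exp(\hbar H_i)$ turns \eqref{DJ:2} into the exact identity $[H_i,E_j]=(\alpha_i,\alpha_j)E_j$ in $U_{\hbar}(\widehat{\mathfrak g})$, which under $\Psi$ becomes $[d_ih_i,e_j]=d_ia_{ij}e_j$, i.e. relation \eqref{LA:2} (and symmetrically on the $F$-side). The Cartan term in \eqref{DJ:3} satisfies
\[
\frac{K_i-K_i^{-1}}{q-q^{-1}}=\frac{\sinh(\hbar H_i)}{\sinh \hbar}=H_i+O(\hbar^2),
\]
so modulo $\hbar$ relation \eqref{DJ:3} becomes $[E_i,F_j]=\delta_{ij}H_i$, mapped by $\Psi$ to $[e_i,d_if_i]=d_ih_i$, matching \eqref{LA:1}. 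Because $q\to 1$ forces $q^{-(\alpha,\beta)}\to 1$, every $q$-bracket degenerates to the ordinary super-bracket $[\![x,y]\!]\to[x,y]$, while the scalar factors satisfy $[\lambda]_q\to\lambda$ and $[(\alpha_i,\alpha_j)]_q\to(\alpha_i,\alpha_j)$. Hence the quantum Serre relations \eqref{DJ:4}--\eqref{DJ:9} reduce exactly to their classical counterparts \eqref{LA:3}--\eqref{LA:5} and their affine analogues furnished by the Serre presentation of $\widehat{\mathfrak g}$, with the rescaling $E_i\mapsto e_i$, $F_i\mapsto d_if_i$, $H_i\mapsto d_ih_i$ absorbing all the coefficients $d_i$ consistently. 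This produces a surjective homomorphism $\bar\Psi\colon U_{\hbar}(\widehat{\mathfrak g})/\hbar U_{\hbar}(\widehat{\mathfrak g})\twoheadrightarrow U(\widehat{\mathfrak g})$.

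For injectivity, and simultaneously for the ``moreover'' assertion, I would invoke the PBW theorem for $U_{\hbar}(\widehat{\mathfrak g})$ over $\mathbb C[[\hbar]]$, which is the reformulated content of \cite[Lemma 6.6.1]{Ya99}. This supplies an ordered family of monomials in the quantum root vectors and Cartan generators that is a topological $\mathbb C[[\hbar]]$-basis of $U_{\hbar}(\widehat{\mathfrak g})$, yielding the module isomorphism $U_{\hbar}(\widehat{\mathfrak g})\cong U(\widehat{\mathfrak g})[[\hbar]]$. Reducing this basis modulo $\hbar$ gives a $\mathbb C$-basis of the quotient whose image under $\bar\Psi$ is precisely a PBW basis of $U(\widehat{\mathfrak g})$; thus $\bar\Psi$ carries a basis to a basis and is an isomorphism.

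I expect the genuine obstacle to be the PBW/topological-freeness step rather than the relation bookkeeping: one must guarantee that $U_{\hbar}(\widehat{\mathfrak g})$ has no $\hbar$-torsion and that reduction modulo $\hbar$ neither collapses nor enlarges the spanning set, equivalently that elements such as $\sinh(\hbar H_i)/\sinh\hbar$ are honest power series with the claimed $\hbar^0$-terms. Since the weight spaces of $U(\widehat{\mathfrak g})$ under the $\widehat{\mathcal Q}$-grading are infinite dimensional because of the loop (imaginary-root) directions, a naive dimension count on weight spaces is unavailable, so the flatness of $U_{\hbar}(\widehat{\mathfrak g})$ must be established through an explicit PBW basis rather than by counting. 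This is exactly the technical heart imported from Yamane's lemma, and the remainder of the argument is its translation into the present conventions.
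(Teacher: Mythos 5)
Your proposal is correct and follows essentially the same route as the paper, which simply records this proposition as an equivalent restatement of \cite[Lemma 6.6.1]{Ya99}: the relation bookkeeping modulo $\hbar$ you carry out is the routine part, and the injectivity/topological-freeness step you correctly isolate as the technical heart is exactly the content the paper imports from Yamane's PBW-type lemma rather than reproving.
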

\begin{remark}
    The process under the isomorphism in Proposition \ref{special:DJ} is termed taking the \textit{classical limit}. 
\end{remark}

\subsection{Drinfeld presentation $\mathcal{U}_q(\widehat{\mathfrak{g}})$}\label{se:quantumAFsuper:DS}
In \cite{HSTY08}, the authors derived the Drinfeld presentation of quantum affine superlagebra of $D(2,1;\lambda)$ using Beck's approach \cite{Be94}. We restate this definition below. 
\begin{definition}
    The \textit{quantum affine superalgebra} $\mathcal{U}_q(\widehat{\mathfrak{g}})$ is an associative superalgebra over $\mathbb{C}(q)$ generated by the elements
    \begin{gather*}
        \mathcal{X}_{i,r}^{\pm},\ \mathcal{H}_{i,\ell},\ \mathcal{K}_i^{\pm 1},\ \mathcal{C}^{\pm\frac{1}{2}},\quad i\in I,\ r\in\mathbb{Z},\ \ell\in\mathbb{Z}\setminus\{0\},
    \end{gather*}
    with $\mathbb{Z}_2$-grading $|\mathcal{X}_{i,r}^{\pm}|=|i|$ and $|\mathcal{H}_{i,\ell}|=|\mathcal{K}_i^{\pm 1}|=|\mathcal{C}^{\pm\frac{1}{2}}|=\bar{0}$. The defining relations are given as follows.
    \begin{align}\label{DS:1}
        &\mathcal{C}^{\pm\frac{1}{2}}~~\text{are central elements},\quad \mathcal{C}^{\pm\frac{1}{2}}\mathcal{C}^{\mp\frac{1}{2}}=\mathcal{K}_i^{\pm 1}K_i^{\mp 1}=1, \\ \label{DS:2}
        &\mathcal{K}_i\mathcal{K}_j=\mathcal{K}_j\mathcal{K}_i,\quad [K_i,\,\mathcal{H}_{j,\ell}]=0, \\ \label{DS:3}
        &[\mathcal{H}_{i,k},\,\mathcal{H}_{j,\ell}]=\delta_{k,-\ell}\frac{[k(\alpha_i,\alpha_j)]_q}{k}\frac{\mathcal{C}^k-\mathcal{C}^{-k}}{q-q^{-1}}, \\ \label{DS:4}
        &\mathcal{K}_i\mathcal{X}_{j,r}^{\pm}\mathcal{K}_i^{-1}
        =q^{\pm(\alpha_i,\alpha_j)}\mathcal{X}_{j,r}^{\pm}, \\ \label{DS:5}
        &[\mathcal{H}_{i,\ell},\,\mathcal{X}_{j,r}^{\pm}]=\pm \frac{[k(\alpha_i,\alpha_j)]_q}{k} \mathcal{C}^{\mp\frac{\operatorname{abs}(\ell)}{2}}\mathcal{X}_{j,\ell+r}^{\pm}, \\ \label{DS:6}
        &[\mathcal{X}_{i,r}^+,\,\mathcal{X}_{j,s}^-]=\delta_{ij}\frac{\mathcal{C}^{\frac{r-s}{2}}\Phi_{i,r+s}^+-\mathcal{C}^{\frac{s-r}{2}}\Phi_{i,r+s}^-}{q-q^{-1}}, \\ \label{DS:7}
        &[\![\mathcal{X}_{i,r\mp 1}^{\pm},\,\mathcal{X}_{j,s}^{\pm}]\!]+(-1)^{|i||j|}[\![\mathcal{X}_{j,s\mp 1}^{\pm},\,\mathcal{X}_{i,r}^{\pm}]\!]=0,\quad \text{if}~~(\alpha_i,\alpha_j)\neq 0, \\ \label{DS:8}
        &[\mathcal{X}_{i,r}^{\pm},\,\mathcal{X}_{j,s}^{\pm}]=0,\quad \text{if}~~(\alpha_i,\alpha_j)=0,\\ \label{DS:9}
        &[\![\mathcal{X}_{i,k}^{\pm},\,[\![\mathcal{X}_{i,r}^{\pm},\,\mathcal{X}_{j,s}^{\pm} ]\!] ]\!]+[\![\mathcal{X}_{i,r}^{\pm},\,[\![\mathcal{X}_{i,k}^{\pm},\,\mathcal{X}_{j,s}^{\pm} ]\!] ]\!]=0,\quad \text{if}~~\mathfrak{g}=D_{\lambda},\ (\alpha_i,\alpha_j)\neq 0,\ \text{and}~~i\neq j, \\ \label{DS:10}
        &[(\alpha_1,\alpha_3)]_q[\![ [\![\mathcal{X}_{1,k}^{\pm},\,\mathcal{X}_{2,r}^{\pm}]\!],\,\mathcal{X}_{3,s}^{\pm}]\!]=[(\alpha_1,\alpha_2)]_q[\![ [\![\mathcal{X}_{1,k}^{\pm},\,\mathcal{X}_{3,s}^{\pm}]\!],\,\mathcal{X}_{2,r}^{\pm}]\!],\quad \text{if}~~\mathfrak{g}=D_{\lambda}^{\circ},
    \end{align}
    where the function $\operatorname{abs}(\ell)$ denotes the absolute value of $\ell$, and $\Phi_{i,\pm k}^{\pm}$ is determined by the formal power series
    \begin{gather}\label{DS:11}
        \sum_{k\in\mathbb{Z}_+}\Phi_{i,\pm k}^{\pm}z^{-k}=\mathcal{K}_i^{\pm 1}\exp\left(\pm(q-q^{-1})\sum_{\ell\in\mathbb{N}}\mathcal{H}_{i,\pm\ell}z^{-\ell}\right).
    \end{gather}
\end{definition}

The next proposition establishes the existence of the isomorphism between the two presentations of quantum affine superalgebra associated to $D(2,1;\lambda)$. We adopt the inverse map of the isomorphism given in \cite[Theorem 6.6]{HSTY08}, with slight refinements. 
\begin{proposition}\label{Isom-QASA}
    There is an isomorphism $\Pi$ of superalgebras from the Drinfeld-Jimbo presentation $U_q(\widehat{\mathfrak{g}})$ to the Drinfeld presentation $\mathcal{U}_q(\widehat{\mathfrak{g}})$ such that
    \begin{equation*}
    \begin{split}
        &E_i\mapsto \mathcal{X}_{i,0}^+,\quad F_i\mapsto\mathcal{X}_{i,0}^-,\quad K_i^{\pm 1}\mapsto \mathcal{K}_i^{\pm 1},\quad  i\in I, \\
        &\mathfrak{g}=D_{\lambda}\begin{cases}
            E_0\mapsto \Pi(K_0)[\![ [\![ [\![\mathcal{X}_{1,1}^-,\,\mathcal{X}_{2,0}^-]\!] ,\,\mathcal{X}_{3,0}^-]\!] ,\,\mathcal{X}_{2,0}^-]\!],\\ 
            F_0\mapsto q^{2(\lambda+1)}[\lambda]_q[\lambda+1]_q\Pi(K_0)^{-1}[\![ [\![ [\![\mathcal{X}_{1,1}^-,\,\mathcal{X}_{2,0}^-]\!] ,\,\mathcal{X}_{3,0}^-]\!] ,\,\mathcal{X}_{2,0}^-]\!],\\
            K_0\mapsto \mathcal{C}\mathcal{K}_1^{-1}\mathcal{K}_2^{-2}\mathcal{K}_3^{-1},
        \end{cases} \\
        &\mathfrak{g}=D_{\lambda}^{\circ}\begin{cases}
            E_0\mapsto \Pi(K_0)[\![ [\![\mathcal{X}_{1,1}^-,\,\mathcal{X}_{2,0}^-]\!] ,\,\mathcal{X}_{3,0}^-]\!] ,\\ 
            F_0\mapsto -\Pi(K_0)^{-1}[\![ [\![\mathcal{X}_{1,1}^-,\,\mathcal{X}_{2,0}^-]\!] ,\,\mathcal{X}_{3,0}^-]\!],\\
            K_0\mapsto \mathcal{C}\mathcal{K}_1^{-1}\mathcal{K}_2^{-1}\mathcal{K}_3^{-1}.
        \end{cases}
    \end{split}
    \end{equation*}
\end{proposition}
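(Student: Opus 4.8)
The plan is to prove that the prescribed assignment $\Pi$ is a well-defined homomorphism of superalgebras, and then to upgrade it to an isomorphism by comparing it with the isomorphism of \cite[Theorem 6.6]{HSTY08}. Since that theorem already furnishes an isomorphism $\mathcal{U}_q(\widehat{\mathfrak{g}}) \xrightarrow{\sim} U_q(\widehat{\mathfrak{g}})$ in the opposite direction, once $\Pi$ is shown to be a homomorphism it suffices to verify that $\Pi$ is inverse to their map up to the stated refinements (the rescalings and signs in the images of $F_0$), which immediately yields bijectivity. Thus the entire substance of the argument is the relation-checking for $\Pi$: I must confirm that the images of $E_i,F_i,K_i^{\pm1}$ ($i\in\hat I$) satisfy the Drinfeld--Jimbo relations \eqref{DJ:1}--\eqref{DJ:9} inside $\mathcal{U}_q(\widehat{\mathfrak{g}})$.

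First I would dispose of the relations among the images indexed by $i\in I$. These generators map to the level-zero Drinfeld generators $\mathcal{X}_{i,0}^{\pm}$ and $\mathcal{K}_i^{\pm1}$, so the restrictions of \eqref{DJ:1}--\eqref{DJ:5} and \eqref{DJ:8}--\eqref{DJ:9} to $I$ follow by specializing the Drinfeld relations \eqref{DS:4}, \eqref{DS:6}, and \eqref{DS:8}--\eqref{DS:10} at $r=s=0$, where the central elements $\mathcal{C}^{\pm 1/2}$ act trivially and \eqref{DS:6} collapses to the Cartan element $\tfrac{\mathcal{K}_i-\mathcal{K}_i^{-1}}{q-q^{-1}}$. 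This reproduces exactly the finite-type Drinfeld--Jimbo relations for $\mathfrak{g}$, so essentially no computation is required here.

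The real content lies in the relations involving the affine node $0$, where $\alpha_0=\delta-\theta$ and $\Pi(E_0),\Pi(F_0)$ are realized as nested super $q$-brackets of Drinfeld generators with a single unit shift in the spectral parameter of $\mathcal{X}_{1,\bullet}^{-}$. I would proceed in three steps. (i) Compute the $\mathcal{K}_i$-weight of the nested bracket using \eqref{DS:4}: the lowering generators $\mathcal{X}_{j,\bullet}^{-}$ contribute weight $-\theta$ while the shift in $\mathcal{X}_{1,1}^{-}$ supplies the imaginary part, so the product acquires weight $\delta-\theta=\alpha_0$, giving \eqref{DJ:2} for $j=0$. (ii) Verify the Serre relations in which $E_0$ participates---\eqref{DJ:4}--\eqref{DJ:7} for $D_{\lambda}$ and \eqref{DJ:8}--\eqref{DJ:9} for $D_{\lambda}^{\circ}$---which reduce, via the higher Drinfeld--Serre identities \eqref{DS:9}--\eqref{DS:10} and the exchange rule \eqref{DS:7}, to relations among shifted brackets, with $\lambda$ entering through the coefficients $[\lambda]_q$ and $[\lambda+1]_q$. (iii) Compute the crucial diagonal commutator $[\Pi(E_0),\Pi(F_0)]$ together with the cross-commutators $[\Pi(E_0),\mathcal{X}_{i,0}^{-}]$ for $i\in I$; the latter must vanish, while the former must equal $\tfrac{\Pi(K_0)-\Pi(K_0)^{-1}}{q-q^{-1}}$, for which the defining relation \eqref{DS:6} and the explicit $\Phi$-series \eqref{DS:11} are the essential inputs.

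I expect step (iii)---in particular pinning down the scalar normalization of $\Pi(F_0)$ so that $[\Pi(E_0),\Pi(F_0)]$ produces precisely $\tfrac{\Pi(K_0)-\Pi(K_0)^{-1}}{q-q^{-1}}$---to be the main obstacle, since it requires iterating \eqref{DS:6}--\eqref{DS:7} through each layer of the nested bracket while tracking the $\lambda$-dependent structure constants arising from the $\lambda$-parametrized, non-symmetric Cartan data of $D(2,1;\lambda)$; this is exactly what forces the prefactor $q^{2(\lambda+1)}[\lambda]_q[\lambda+1]_q$ in the $D_{\lambda}$ case and the sign $-1$ in the $D_{\lambda}^{\circ}$ case. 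Once all Drinfeld--Jimbo relations are confirmed, $\Pi$ is a homomorphism, and identifying it (after the refinements) with the inverse of the \cite[Theorem 6.6]{HSTY08} isomorphism shows that $\Pi$ is an isomorphism, completing the proof.
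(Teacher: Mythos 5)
The paper does not prove this proposition by relation--checking at all: its entire argument is to take the isomorphism of \cite[Theorem 6.6]{HSTY08}, invert it, and observe that the stated map differs from that inverse only by a rescaling of the images of $E_0$ and $F_0$; the sole new computation, recorded in the Remark that follows, is $[\mathfrak{X}_{\theta}^-,\,\mathfrak{X}_{\theta}^+]=p_{\mathfrak{g}}\,\frac{\mathcal{C}\mathcal{K}_{\theta}^{-1}-\mathcal{C}^{-1}\mathcal{K}_{\theta}}{q-q^{-1}}$, which pins down the scalars $q^{2(\lambda+1)}[\lambda]_q[\lambda+1]_q$ and $-1$ and shows the discrepancy is absorbed by an automorphism of the Drinfeld--Jimbo side. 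Your plan is genuinely different: you propose to verify all of \eqref{DJ:1}--\eqref{DJ:9} for the images directly and only then invoke \cite{HSTY08} for bijectivity. That route is viable in principle and more self-contained, but it is also structurally redundant: your own final step (checking that $\Pi$ agrees with the inverse of the cited isomorphism up to the rescaling) already forces $\Pi$ to be a homomorphism, so the entire direct relation-check it is built on becomes unnecessary. What the paper's route buys is precisely the avoidance of that check, at the cost of opacity about where the normalization constants come from; what yours would buy is an independent verification, at the cost of reproducing a large part of the computations of \cite{HSTY08}.

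The substantive gap is that the hard parts of your plan are named but not executed. Step (ii) requires verifying the quartic Serre-type relations \eqref{DJ:6}--\eqref{DJ:7} for $\Pi(E_0)$, a four-fold nested $q$-bracket in the current generators, and step (iii) requires the full iterated evaluation of $[\Pi(E_0),\,\Pi(F_0)]$ through relations \eqref{DS:6}--\eqref{DS:7}; you correctly identify these as the main obstacles but supply no computation, and these verifications are exactly the content that makes \cite{HSTY08} nontrivial. A smaller point: in step (i) you attribute the weight $\alpha_0=\delta-\theta$ partly to the spectral shift in $\mathcal{X}_{1,1}^{-}$, but relation \eqref{DS:4} is independent of the spectral parameter, so the $\mathcal{K}_i$-conjugation weight of the nested bracket is simply $-\theta$; the relation \eqref{DJ:2} for $j=0$ holds because $(\alpha_i,\delta)=0$ for $i\in I$, not because the shift ``supplies'' $\delta$. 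The shift matters instead for identifying the element as lying in the affine root space $\mathfrak{g}_{\delta-\theta}$, which is not what \eqref{DJ:2} tests.
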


\begin{remark}
Denote $\mathcal{K}_{\theta}=\Pi(K_{\theta})$ and 
\begin{equation*}
\mathfrak{X}_{\theta}^{\mp}=\begin{cases}
    [\![ [\![ [\![\mathcal{X}_{1,\pm 1}^{\mp},\,\mathcal{X}_{2,0}^{\mp}]\!] ,\,\mathcal{X}_{3,0}^{\mp}]\!] ,\,\mathcal{X}_{2,0}^{\mp}]\!],  & \text{if }~~\mathfrak{g}=D_{\lambda}, \\
    [\![ [\![\mathcal{X}_{1,\pm 1}^{\mp},\,\mathcal{X}_{2,0}^{\mp}]\!] , \,\mathcal{X}_{3,0}^{\mp}]\!]&\text{if }~~\mathfrak{g}=D_{\lambda}^{\circ}. 
\end{cases}
\end{equation*}
A straightforward calculation yields $$[\mathfrak{X}_{\theta}^-,\,\mathfrak{X}_{\theta}^+]=p_{\mathfrak{g}}\frac{\mathcal{C}\mathcal{K}_{\theta}^{-1}-\mathcal{C}^{-1}\mathcal{K}_{\theta}}{q-q^{1}},$$ 
where the constant $p_{\mathfrak{g}}\in\mathbb{C}(q)$ is given by
\begin{equation*}
    p_{\mathfrak{g}}=\begin{cases}
        q^{2(\lambda+1)}[\lambda]_q[\lambda+1]_q, &\text{if }~~\mathfrak{g}=D_{\lambda}, \\ 
        -1, & \text{if }~~\mathfrak{g}=D_{\lambda}^{\circ}. 
    \end{cases}
\end{equation*}
By  definition, the morphism fixes $E_i,F_i$ for $i\in I$, $K_j$ for $j\in\hat{I}$, while scaling
\begin{gather*}
    E_0\mapsto g^+E_0,\quad F_0\mapsto g^-F_0\qquad(g^+,g^-\in\mathbb{C}(q),\ g^+g^-=p_{\mathfrak{g}}),
\end{gather*}
automatically preserves the Drinfeld-Jimbo relations and thus defines an 
automorphism of $U(\widehat{\mathfrak{g}})$. 
\end{remark}

\vspace{1em}

Proposition \ref{Isom-QASA} allows us to define $\mathcal{K}_i=\exp(\mathcal{H}_{i,0}\hbar)$. In this way, we regard  $\mathcal{U}_q(\widehat{\mathfrak{g}})$ as a $\mathbb{C}[[\hbar]]$-associative superalgebra with generators $\mathcal{X}_{i,r}^{\pm}$ and $\mathcal{H}_{i,r}$ for $i\in I$, $r\in\mathbb{Z}$, denoted by $\mathcal{U}_{\hbar}(\widehat{\mathfrak{g}})$. The quantum loop superalgebra $\mathcal{U}_{\hbar}(\mathcal{L}\mathfrak{g})$ is then defined as
the quotient of $\mathcal{U}_{\hbar}(\widehat{\mathfrak{g}})$ by the relations $\mathcal{C}^{\pm\frac{1}{2}}=1$. The generators of $\mathcal{U}_{\hbar}(\mathcal{L}\mathfrak{g})$ satisfy relations \eqref{DS:7}--\eqref{DS:10} and the subsequent lemma. 
\begin{lemma}
    The following equalities hold in $\mathcal{U}_{\hbar}(\mathcal{L}\mathfrak{g})$, 
    \begin{align}\label{Dh:1}
        &[\mathcal{H}_{i,r},\,\mathcal{H}_{j,s}]=0, \\ \label{Dh:2}
        &[\mathcal{H}_{i,0},\,\mathcal{X}_{j,s}^{\pm}]=\pm (\alpha_i,\alpha_j)\mathcal{X}_{j,s}^{\pm}, \\ \label{Dh:3}
        &[\mathcal{H}_{i,r},\,\mathcal{X}_{j,s}^{\pm}]=\pm \frac{[r(\alpha_i,\alpha_j)]_q}{r} \mathcal{X}_{j,r+s}^{\pm}, \\ \label{Dh:4}
        &[\mathcal{X}_{i,r}^+,\,\mathcal{X}_{j,s}^-]=\delta_{ij}\frac{\Phi_{i,r+s}^+-\Phi_{i,r+s}^-}{q-q^{-1}},
    \end{align}
    where we still use the notations $\mathcal{X}_{i,r}^{\pm}$, $\mathcal{H}_{i,r}$, $\Phi_{i,\pm k}^{\pm}$ {\rm(}$i\in I$, $r\in\mathbb{Z}$, $k\in\mathbb{Z}_+${\rm)} to denote the image of the generators of $\mathcal{U}_{\hbar}(\hat{\mathfrak{g}})$. 
\end{lemma}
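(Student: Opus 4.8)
The plan is to obtain each of the four identities by passing to the quotient $\mathcal{U}_{\hbar}(\mathcal{L}\mathfrak{g})=\mathcal{U}_{\hbar}(\widehat{\mathfrak{g}})/(\mathcal{C}^{\pm\frac{1}{2}}-1)$, that is, by specializing $\mathcal{C}=1$ in the defining relations of $\mathcal{U}_q(\widehat{\mathfrak{g}})$. Three of the four are then immediate. For \eqref{Dh:1}, specialize \eqref{DS:3}: the central factor $\mathcal{C}^k-\mathcal{C}^{-k}$ collapses to $0$, so $[\mathcal{H}_{i,r},\,\mathcal{H}_{j,s}]$ vanishes for all $r,s$, including the a priori nonzero case $r=-s$. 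For \eqref{Dh:3}, specialize \eqref{DS:5}: the prefactor $\mathcal{C}^{\mp\operatorname{abs}(\ell)/2}$ becomes $1$, leaving the stated identity for $\ell\neq 0$. For \eqref{Dh:4}, specialize \eqref{DS:6}: both $\mathcal{C}^{(r-s)/2}$ and $\mathcal{C}^{(s-r)/2}$ become $1$, recovering the symmetric form of the bracket.

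The genuine point is \eqref{Dh:2}, because $\mathcal{H}_{i,0}$ is not among the original Drinfeld generators; it enters only through the definition $\mathcal{K}_i=\exp(\mathcal{H}_{i,0}\hbar)$ made available by Proposition \ref{Isom-QASA} in the $\mathbb{C}[[\hbar]]$-setting. I would start from relation \eqref{DS:4}, namely $\mathcal{K}_i\mathcal{X}_{j,s}^{\pm}\mathcal{K}_i^{-1}=q^{\pm(\alpha_i,\alpha_j)}\mathcal{X}_{j,s}^{\pm}$, and rewrite both sides as formal power series in $\hbar$. On the left, the standard conjugation formula gives
\begin{gather*}
    \mathcal{K}_i\mathcal{X}_{j,s}^{\pm}\mathcal{K}_i^{-1}=\exp(\hbar\,\operatorname{ad}\mathcal{H}_{i,0})(\mathcal{X}_{j,s}^{\pm})=\sum_{n\geqslant 0}\frac{\hbar^n}{n!}(\operatorname{ad}\mathcal{H}_{i,0})^n(\mathcal{X}_{j,s}^{\pm}),
\end{gather*}
whose coefficient of $\hbar^1$ is $[\mathcal{H}_{i,0},\,\mathcal{X}_{j,s}^{\pm}]$. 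On the right, using $q^n=\exp(n\hbar)$, the coefficient of $\hbar^1$ in $\exp(\pm(\alpha_i,\alpha_j)\hbar)\mathcal{X}_{j,s}^{\pm}$ is $\pm(\alpha_i,\alpha_j)\mathcal{X}_{j,s}^{\pm}$. Comparing the two yields \eqref{Dh:2} at once, with no need to know the higher-order terms in advance.

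The only place requiring care is this last comparison: one must work genuinely in the $\hbar$-adic completion $\mathcal{U}_{\hbar}(\widehat{\mathfrak{g}})$, where $\mathcal{H}_{i,0}$ is a well-defined logarithm of $\mathcal{K}_i$, so that both the exponential conjugation formula and the coefficient extraction are legitimate. Granting this, the matching of the linear $\hbar$-terms is forced; moreover, since it already exhibits $\operatorname{ad}\mathcal{H}_{i,0}$ acting on $\mathcal{X}_{j,s}^{\pm}$ as the scalar $\pm(\alpha_i,\alpha_j)$, the higher-order terms on both sides agree automatically and the full exponentiated relation \eqref{DS:4} is recovered consistently, confirming \eqref{Dh:2}.
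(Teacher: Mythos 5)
Your proposal is correct and coincides with what the paper intends: the paper states this lemma without proof, treating \eqref{Dh:1}, \eqref{Dh:3}, \eqref{Dh:4} as immediate specializations of \eqref{DS:3}, \eqref{DS:5}, \eqref{DS:6} at $\mathcal{C}^{\pm\frac{1}{2}}=1$, and \eqref{Dh:2} as the logarithmic form of \eqref{DS:4} under $\mathcal{K}_i=\exp(\mathcal{H}_{i,0}\hbar)$. The one step you should tighten is the ``coefficient of $\hbar^1$'' comparison: in an $\hbar$-adically complete algebra this only yields $[\mathcal{H}_{i,0},\,\mathcal{X}_{j,s}^{\pm}]\equiv\pm(\alpha_i,\alpha_j)\mathcal{X}_{j,s}^{\pm}$ modulo $\hbar$, and your closing claim that the higher-order terms then ``agree automatically'' presupposes the exact identity you are trying to prove. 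The clean fix is to rewrite \eqref{DS:4} as
\begin{gather*}
\exp\bigl(\hbar\,(\operatorname{ad}\mathcal{H}_{i,0}\mp(\alpha_i,\alpha_j))\bigr)(\mathcal{X}_{j,s}^{\pm})=\mathcal{X}_{j,s}^{\pm},
\end{gather*}
set $A=\operatorname{ad}\mathcal{H}_{i,0}\mp(\alpha_i,\alpha_j)$, and note that $\exp(\hbar A)-1=\hbar\,A\circ(1+\tfrac{\hbar}{2}A+\cdots)$ with the second factor invertible in the $\hbar$-adic topology; since $\mathcal{U}_{\hbar}(\mathcal{L}\mathfrak{g})\cong U(\mathcal{L}\mathfrak{g})[[\hbar]]$ is $\hbar$-torsion-free (Proposition \ref{special:Drinfeld}), one may cancel $\hbar$ and the invertible factor to conclude $A(\mathcal{X}_{j,s}^{\pm})=0$ exactly, which is \eqref{Dh:2}.
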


As a consequence of Proposition \ref{Iso-AFg}, \ref{special:DJ} and \ref{Isom-QASA},  the following diagram commutes: 

\vspace{0.4em}
\centerline{
   $\xymatrix@=1cm{ 
  U_{\hbar}'(\widehat{\mathfrak{g}}) \ar@{->>}[r] \ar[d]_{\simeq} 
    & U_{\hbar}'(\widehat{\mathfrak{g}})/\hbar U_{\hbar}'(\widehat{\mathfrak{g}}) \ar[r]_{~~~~\simeq} \ar[d]^{\simeq} & U'(\widehat{\mathfrak{g}}) \ar[d]^{\simeq} \\
  \mathcal{U}_{\hbar}(\mathcal{L}\mathfrak{g}) \ar@{->>}[r] 
    & \mathcal{U}_{\hbar}(\mathcal{L}\mathfrak{g})/\hbar \mathcal{U}_{\hbar}(\mathcal{L}\mathfrak{g}) \ar[r]^{~~~~\simeq} & U(\mathcal{L}\mathfrak{g}),
}$
}

\vspace{0.4em}
\noindent
where $U_{\hbar}'(\widehat{\mathfrak{g}})$ (resp. $U'(\widehat{\mathfrak{g}})$) denotes the quotient of $U_{\hbar}(\widehat{\mathfrak{g}})$ (resp. $U(\widehat{\mathfrak{g}})$) by the ideal generated by $K_{\theta}-1$ (resp. $\mathfrak{c}$). 
In other words, we have 
\begin{proposition}\label{special:Drinfeld}
    There is a $\mathbb{C}$-superalgebraic isomorphism $\mathcal{U}_{\hbar}(\mathcal{L}\mathfrak{g})/\hbar\mathcal{U}_{\hbar}(\mathcal{L}\mathfrak{g})\rightarrow U(\mathcal{L}\mathfrak{g})$ such that
    \begin{gather*}
        \bar{\mathcal{X}}_{i,r}^+\mapsto e_i^{(r)},\quad  \bar{\mathcal{X}}_{i,r}^-\mapsto d_if_i^{(r)},\quad \bar{\mathcal{H}}_{i,r}\mapsto d_ih_i^{(r)},
    \end{gather*}
    where $\bar{\mathcal{X}}$ denotes the image of $\mathcal{X}$ on the quotient $\mathcal{U}_{\hbar}(\mathcal{L}\mathfrak{g})/\hbar\mathcal{U}_{\hbar}(\mathcal{L}\mathfrak{g})$. 
    Moreover, $\mathcal{U}_{\hbar}(\mathcal{L}\mathfrak{g})$ is isomorphic to $U(\mathcal{L}\mathfrak{g})[[\hbar]]$ as $\mathbb{C}[[\hbar]]$-module. 
\end{proposition}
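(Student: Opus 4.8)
The plan is to identify the desired map with the bottom edge of the commutative square drawn just above the statement, so that Proposition~\ref{special:Drinfeld} follows formally once the square is known to commute. First I would assemble its four sides: the left vertical isomorphism $U_\hbar'(\widehat{\mathfrak g})\xrightarrow{\sim}\mathcal U_\hbar(\mathcal L\mathfrak g)$ induced by $\Pi$, the top classical-limit isomorphism $U_\hbar'(\widehat{\mathfrak g})/\hbar U_\hbar'(\widehat{\mathfrak g})\xrightarrow{\sim}U'(\widehat{\mathfrak g})$ furnished by Proposition~\ref{special:DJ}, and the right vertical isomorphism $U'(\widehat{\mathfrak g})\xrightarrow{\sim}U(\mathcal L\mathfrak g)$ obtained from the map $\imath$ of Proposition~\ref{Iso-AFg} by specializing $\mathfrak c=0$. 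The induced map on $\mathcal U_\hbar(\mathcal L\mathfrak g)/\hbar\mathcal U_\hbar(\mathcal L\mathfrak g)$ is then the asserted isomorphism, and its effect on the Drinfeld generators is computed by chasing $\mathcal X_{i,r}^\pm$ and $\mathcal H_{i,r}$ around the diagram.

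First I would make the left vertical arrow precise. As explained just before the statement, $\mathcal U_\hbar(\widehat{\mathfrak g})$ is defined via $\mathcal K_i=\exp(\mathcal H_{i,0}\hbar)$ precisely so that $\Pi$ of Proposition~\ref{Isom-QASA} becomes an isomorphism of $\mathbb C[[\hbar]]$-forms $U_\hbar(\widehat{\mathfrak g})\xrightarrow{\sim}\mathcal U_\hbar(\widehat{\mathfrak g})$ with $K_i\mapsto\mathcal K_i$, $E_i\mapsto\mathcal X_{i,0}^+$, $F_i\mapsto\mathcal X_{i,0}^-$ for $i\in I$. Using the explicit image of $K_0$ one computes $\Pi(K_\delta)=\Pi(K_0K_\theta)=\mathcal C$ in both cases $\mathfrak g=D_\lambda,D_\lambda^\circ$, so $\Pi$ carries the ideal generated by the central element $K_\delta-1$ onto the ideal $(\mathcal C-1)$ whose quotient defines $\mathcal U_\hbar(\mathcal L\mathfrak g)$. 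Passing to quotients yields the left vertical isomorphism. The top and right sides are already isomorphisms by Propositions~\ref{special:DJ} and \ref{Iso-AFg}, each quotient by the central element being compatible with the corresponding quotient on the classical side, so only the commutativity of the square remains.

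The hard part is to verify commutativity and then to pin down the images for all modes. Commutativity I would check on the generators $E_i,F_i,K_i$ ($i\in\hat I$), where all four arrows are explicit: for $i\in I$, $E_i$ traverses to $\mathcal X_{i,0}^+$ and then to $e_i=e_i^{(0)}$, matching $\bar E_i\mapsto e_i$ pushed through $\imath$, and similarly for $F_i,H_i$ and for the central element; the remaining affine generators $E_0,F_0$ are handled by their explicit images under $\Pi$ together with $\bar E_0\mapsto e_0$, which equals $f_\theta^{(1)}$ under $\imath$. This same computation for $E_0$ (resp. $F_0$) already identifies the first higher mode $\bar{\mathcal X}_{1,1}^-\mapsto d_1f_1^{(1)}$ entering the bracket $\mathfrak X_\theta^\pm$, breaking the apparent circularity. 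For the remaining modes I would induct on $|r|$ using relation \eqref{Dh:3}: its classical shadow is governed by the loop bracket \eqref{supercommutator}, and since $\bar{\mathcal H}_{i,\pm1}\mapsto d_ih_i^{(\pm1)}$ one has $[d_ih_i^{(\pm1)},e_j^{(r)}]=(\alpha_i,\alpha_j)e_j^{(r\pm1)}$, which matches the $q\to1$ limit $(\alpha_i,\alpha_j)$ of $[(\alpha_i,\alpha_j)]_q$; hence $\bar{\mathcal X}_{j,r\pm1}^+\mapsto e_j^{(r\pm1)}$ whenever $(\alpha_i,\alpha_j)\neq0$, and the companion statements for $\mathcal X^-$ and $\mathcal H$ propagate in the same way from $r=0$.

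Finally, the ``Moreover'' assertion I would read off from the right-hand column: Proposition~\ref{special:DJ} gives $U_\hbar(\widehat{\mathfrak g})\cong U(\widehat{\mathfrak g})[[\hbar]]$ as $\mathbb C[[\hbar]]$-modules, and quotienting by the central relation produces $U_\hbar'(\widehat{\mathfrak g})\cong U'(\widehat{\mathfrak g})[[\hbar]]\cong U(\mathcal L\mathfrak g)[[\hbar]]$, which transports to $\mathcal U_\hbar(\mathcal L\mathfrak g)$ along the left vertical isomorphism. I expect the genuine obstacles to be the two integrality points hidden above: confirming that $\Pi$ preserves the $\mathbb C[[\hbar]]$-forms and is compatible with the central quotient, and that the quotient of the topologically free module $U(\widehat{\mathfrak g})[[\hbar]]$ by the central relation stays topologically free so that the module isomorphism survives — together with the scalar bookkeeping in the inductive identification of the higher modes, for which $\Pi$ supplies explicit formulas only in the zero modes and the affine simple root.
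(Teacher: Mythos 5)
Your proposal follows exactly the paper's route: the paper derives this proposition precisely as the bottom row of the commutative square built from Propositions \ref{Iso-AFg}, \ref{special:DJ} and \ref{Isom-QASA}, which is the diagram you assemble and chase. Your elaboration (identifying $\Pi(K_\delta)=\mathcal{C}$ so that the central quotients match, checking commutativity on Chevalley generators, and bootstrapping the higher modes from the image of $E_0$ via relation \eqref{Dh:3}) correctly fills in the details the paper leaves implicit, so the argument is sound and essentially identical in structure.
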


For an $\alpha\in\mathcal{R}^+$ with a composition $\alpha=\alpha_{i_1}+\cdots+\alpha_{i_l}$ and a sequence $\overline{\bf r}=(r_{i_1},\cdots,r_{i_l})\in \mathbb{Z}^{l}$ such that $r=r_1+\ldots+r_l$, define
\begin{gather*}
    \mathcal{X}_{\alpha,\overline{\bf r}}^{\pm}:=[\![\ldots[\![\mathcal{X}_{i_1,r_1}^{\pm},\,\mathcal{X}_{i_2,r_2}^{\pm}]\!],\ldots,\mathcal{X}_{i_l,r_l}^{\pm}]\!],
\end{gather*}
if the root vector $e_{\alpha}=[\ldots[e_{i_1},\,e_{i_2}],\ldots,e_{i_l}]\in\mathfrak{g}$ as described in Section 2.1. Set $\overline{\bf r}^{0}=(r,0,\ldots,0)$.

\begin{corollary}\label{PBW:Uh}
    Fix some ordering on the countable set
    $\{\mathcal{X}_{\alpha,\overline{\bf r}}^{\pm},\ \mathcal{H}_{i,r}\}_{\alpha\in\mathcal{R^+},i\in I, r\in\mathbb{Z}}$. 
    The set of all ordered monomials in all $\mathcal{X}_{\alpha,\overline{\bf r}^{0}}^{\pm}$ and $\mathcal{H}_{i,r}$ {\rm (}with the powers of odd elements not greater than 1{\rm )} forms a basis for $\mathcal{U}_{\hbar}(\mathcal{L}\mathfrak{g})$ over $\mathbb{C}[[\hbar]]$.
\end{corollary}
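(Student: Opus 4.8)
The plan is to reduce the statement to its classical counterpart via the deformation established in Proposition~\ref{special:Drinfeld}. That proposition identifies $\mathcal{U}_{\hbar}(\mathcal{L}\mathfrak{g})$ with $U(\mathcal{L}\mathfrak{g})[[\hbar]]$ as a $\mathbb{C}[[\hbar]]$-module, so $\mathcal{U}_{\hbar}(\mathcal{L}\mathfrak{g})$ is topologically free and its reduction modulo $\hbar$ is $U(\mathcal{L}\mathfrak{g})$. For such a module a standard lifting argument shows that a countable family of elements is a topological $\mathbb{C}[[\hbar]]$-basis precisely when its image in the classical limit $U(\mathcal{L}\mathfrak{g})$ is a $\mathbb{C}$-basis: linear independence over $\mathbb{C}[[\hbar]]$ follows by dividing any vanishing finite combination by the minimal power of $\hbar$ and reducing modulo $\hbar$ to contradict independence of the images, while spanning follows by $\hbar$-adic successive approximation using that the images span. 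Thus it suffices to compute the images of the ordered monomials in $\{\mathcal{X}_{\alpha,\overline{\bf r}^{0}}^{\pm},\ \mathcal{H}_{i,r}\}$ under the isomorphism $\mathcal{U}_{\hbar}(\mathcal{L}\mathfrak{g})/\hbar\mathcal{U}_{\hbar}(\mathcal{L}\mathfrak{g})\cong U(\mathcal{L}\mathfrak{g})$.

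Next I would compute these images. Since $q^{-(\alpha,\beta)}=\exp(-(\alpha,\beta)\hbar)=1+O(\hbar)$, every bracket $[\![x,\,y]\!]=xy-(-1)^{|x||y|}q^{-(\alpha,\beta)}yx$ reduces modulo $\hbar$ to the ordinary super-commutator $[\bar{x},\,\bar{y}]$. Because the reduction map is a superalgebra homomorphism, the image of $\mathcal{X}_{\alpha,\overline{\bf r}^{0}}^{\pm}$ is the corresponding iterated super-commutator of the images of its generators, namely $[\ldots[e_{i_1}^{(r)},\,e_{i_2}^{(0)}],\ldots,e_{i_l}^{(0)}]$ in the $+$ case. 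Using the loop commutator \eqref{supercommutator}, all loop degrees add up in the first slot and the central term is absent in $\mathcal{L}\mathfrak{g}$, so this collapses to $e_{\alpha}^{(r)}$, where $e_{\alpha}=[\ldots[e_{i_1},\,e_{i_2}],\ldots,e_{i_l}]$ is exactly the root vector fixed in Section~\ref{se:simpleLiesuper:DG}; in the $-$ case one obtains a nonzero scalar multiple of $f_{\alpha}^{(r)}$ arising from the factors $d_i\neq0$ in $\bar{\mathcal{X}}_{i,r}^{-}\mapsto d_if_i^{(r)}$. Likewise $\mathcal{H}_{i,r}\mapsto d_ih_i^{(r)}$.

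Finally I would invoke the PBW theorem for $U(\mathcal{L}\mathfrak{g})$. By Section~\ref{se:simpleLiesuper:DG}, $\{e_{\alpha},f_{\alpha},h_i\mid\alpha\in\mathcal{R}^+,i\in I\}$ is an ordered homogeneous basis of $\mathfrak{g}$, hence $\{e_{\alpha}^{(r)},f_{\alpha}^{(r)},h_i^{(r)}\mid r\in\mathbb{Z}\}$ is a homogeneous basis of the loop superalgebra $\mathcal{L}\mathfrak{g}$. The super-PBW theorem then asserts that the ordered monomials in these elements, with each odd element occurring to a power at most $1$, form a $\mathbb{C}$-basis of $U(\mathcal{L}\mathfrak{g})$. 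By the previous paragraph these are, up to nonzero scalars that do not affect the basis property, exactly the images of the proposed monomials, and the odd-power constraint matches verbatim. The lifting lemma of the first paragraph then promotes this to the desired $\mathbb{C}[[\hbar]]$-basis of $\mathcal{U}_{\hbar}(\mathcal{L}\mathfrak{g})$.

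The main obstacle I anticipate is the image computation of the second paragraph rather than the formal deformation argument: one must be certain that the iterated $q$-bracket $\mathcal{X}_{\alpha,\overline{\bf r}^{0}}^{\pm}$ does not degenerate in the limit and that its leading term is genuinely the nonzero root vector $e_{\alpha}^{(r)}$ (resp. a nonzero multiple of $f_{\alpha}^{(r)}$), which relies on the brackets organizing the loop degrees correctly through \eqref{supercommutator} and on the one-dimensionality of each root space $\mathfrak{g}_{\alpha}$. A secondary care point is phrasing the lifting lemma for an infinite (countable) index set, where over $\mathbb{C}[[\hbar]]$ the word ``basis'' must be understood in the topological ($\hbar$-adically complete) sense.
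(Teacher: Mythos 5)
Your proposal is correct and follows essentially the route the paper intends: the corollary is deduced from Proposition~\ref{special:Drinfeld} (topological freeness over $\mathbb{C}[[\hbar]]$ plus the explicit classical limit of the current generators) together with the super-PBW theorem for $U(\mathcal{L}\mathfrak{g})$, the images of the monomials $\mathcal{X}_{\alpha,\overline{\bf r}^{0}}^{\pm}$ being nonzero multiples of $e_{\alpha}^{(r)}$ and $f_{\alpha}^{(r)}$ exactly as you compute. Your two care points (nondegeneration of the iterated brackets in the limit, and reading ``basis'' $\hbar$-adically) are the right ones and are handled correctly.
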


Corollary \ref{PBW:Uh} provides a PBW basis for the quantum loop superalgebra $\mathcal{U}_{\hbar}(\mathcal{L}\mathfrak{g})$, which will be used in Section \ref{se:superYangian:fromto}.

\medskip
\section{Super Yangians of $D(2,1;\lambda)$}\label{se:superYangian}
In this section, we will introduce the super Yangians of $D(2,1;\lambda)$ in terms of Drinfeld presentation, which has not appeared in prior works. We use Guay-Ma-Conner's \cite{CG15,GM12} degeneration to construct a PBW basis for our definition, originating from \cite{LWZ25} for twisted Yangians of split type. Finally, we give some isomorphisms related to these new super Yangians. 
\subsection{Definition of the super Yangians}\label{se:superYangian:def}
Let us now define the Drinfeld presentations of the super Yangians of $D(2,1;\lambda)$ subject to a parameter $\hbar\neq 0$. 
\begin{definition}\label{Def-Y}
    Drinfeld presentation of the super Yangian $Y_{\hbar}(\mathfrak{g})$ is an associative superalgebra over $\mathbb{C}[\hbar]$ generated by the elements
    \begin{gather*}
        x_{i,m}^{\pm},\ h_{i,m},\quad i\in I,\ m\in\mathbb{Z}_+,
    \end{gather*}
    with $\mathbb{Z}_2$-grading $|x_{i,m}^{\pm}|=|i|$ and $|h_{i,m}|=\bar{0}$. The defining relations are given as follows. 
    \begin{align}\label{SY:1}
        &[h_{i,m},\,h_{i,n}]=0, \\ \label{SY:2}
        &[h_{i,0},\,x_{j,n}^{\pm}]=\pm(\alpha_i,\alpha_j)x_{j,n}^{\pm}, \\ \label{SY:3}
        &[h_{i,m+1},\,x_{j,n}^{\pm}]=[h_{i,m},\,x_{j,n+1}^{\pm}]\pm\frac{\hbar(\alpha_i,\alpha_j)}{2}\{h_{i,m},\,x_{j,n}^{\pm}\}, \\ \label{SY:4}
        &[x_{i,m}^+,\,x_{j,n}^-]=\delta_{ij}h_{i,m+n}, \\ \label{SY:5}
        &[x_{i,m+1}^{\pm},\,x_{j,n}^{\pm}]=[x_{i,m}^{\pm},\,x_{j,n+1}^{\pm}]\pm\frac{\hbar(\alpha_i,\alpha_j)}{2}\{x_{i,m}^{\pm},\,x_{j,n}^{\pm}\}, \\ \label{SY:6}
        &[x_{i,m}^{\pm},\,x_{j,n}^{\pm}]=0,\quad \text{if}~~(\alpha_i,\alpha_j)=0, \\ \label{SY:7}
        &[x_{i,l}^{\pm},\,[x_{i,m}^{\pm},\,x_{j,n}^{\pm}]]+[x_{i,m}^{\pm},\,[x_{i,l}^{\pm},\,x_{j,n}^{\pm}]]=0,\quad \text{if}~~\mathfrak{g}=D_x,\ (\alpha_i,\alpha_j)\neq 0,\ \text{and}~~i\neq j, \\ \label{SY:8}
        &(\alpha_1,\alpha_3)[[x_{1,l}^{\pm},\,x_{2,m}^{\pm}],\,x_{3,n}^{\pm}]=(\alpha_1,\alpha_2)[[x_{1,l}^{\pm},\,x_{3,n}^{\pm}],\,x_{2,m}^{\pm}],\quad \text{if}~~\mathfrak{g}=D_x^{\circ}. 
    \end{align}
\end{definition}

\begin{remark}\label{hbar}
    In the definition above, the parameter $\hbar$ can assume any non-zero complex number. Nevertheless, for the sake of convenience in this paper, we require that $\hbar$ remain consistent with that employed in Section \ref{se:quantumAFsuper}. The superalgebras $Y_{\hbar}(\mathfrak{g})$ for all possible $\hbar$ are pairwise isomorphic. More specifically, an isomorphism $Y_{\hbar'}(\mathfrak{g})\rightarrow Y_{\hbar}(\mathfrak{g})$ can be defined by
    \begin{gather*}
        x_{i,m}^{\pm}\mapsto (\hbar'/\hbar)^m x_{i,m}^{\pm},\quad h_{i,m}\mapsto (\hbar'/\hbar)^m h_{i,m}.
    \end{gather*} 
\end{remark}

Introduce a filtration on the superalgebra $Y_{\hbar}(\mathfrak{g})$ by setting
$$\deg x_{i,m}^{\pm}=\deg h_{i,m}=m,\quad \text{and}\quad \deg (xy)=\deg x+\deg y~~\text{for homogeneous}~~x,y\in Y_{\hbar}(\mathfrak{g}).$$
Denote
\begin{align*}
    &Y_{\hbar}^{[p]}(\mathfrak{g}):=\operatorname{span}_{\mathbb{C}[\hbar]}\{\,x\in Y_{\hbar}(\mathfrak{g})\,|\,\deg x\leqslant p\,\},\quad p\geqslant0, \\
    &\operatorname{gr}^{[p]}Y_{\hbar}(\mathfrak{g}):=Y_{\hbar}^{[p]}(\mathfrak{g})/Y_{\hbar}^{[p-1]}(\mathfrak{g}),\quad p>0, \quad \operatorname{gr}^{[0]}Y_{\hbar}(\mathfrak{g})=Y_{\hbar}^{[0]}(\mathfrak{g}).
\end{align*}
The graded algebra associated with this filtration is defined as
\begin{gather*}
    \operatorname{gr}Y_{\hbar}(\mathfrak{g})=\oplus_{p\geqslant 0}\operatorname{gr}^{[p]}Y_{\hbar}(\mathfrak{g}).
\end{gather*}

For an $\alpha\in\mathcal{R}^+$ with a composition $\alpha=\alpha_{i_1}+\cdots+\alpha_{i_l}$ and a sequence $\overline{\bf m}=(m_{i_1},\cdots,m_{i_l})\in \mathbb{Z}_+^{l}$ such that $m=m_1+\ldots+m_l$, define
\begin{gather*}
    x_{\alpha,\overline{\bf m}}^{\pm}:=[\ldots[x_{i_1,m_1}^{\pm},\,x_{i_2,m_2}^{\pm}],\ldots,x_{i_l,m_l}^{\pm}],
\end{gather*}
if the root vector $e_{\alpha}=[\ldots[e_{i_1},\,e_{i_2}],\ldots,e_{i_l}]\in\mathfrak{g}$ as described in Section 2.1. Observe that $x_{\alpha,\overline{\bf m}}^{\pm}\in Y_{\hbar}^{[m]}(\mathfrak{g})$. We need to show that for any $\overline{\bf m}\in\mathbb{Z}_+^l$,
\begin{gather*}
    x_{\alpha,\overline{\bf m}}^{\pm}-x_{\alpha,\overline{\bf m}^0}^{\pm}\in Y_{\hbar}^{[m-1]}(\mathfrak{g}),
\end{gather*}
where $\overline{\bf m}^0=(m,0,\ldots,0)$. We prove it case by case. Denote $s(\mathcal{P})$ to be the degree of a polynomial $\mathcal{P}\in\mathfrak{g}$ in elements $e_i$. 
\begin{enumerate}
    \item[(1)] $s(e_{\alpha})=1$. This is obvious.

    \item[(2)] $s(e_{\alpha})=2$. By relation \eqref{SY:5}, for $m'+n'=m+n$ and $\alpha=\alpha_{i_1}+\alpha_{i_2}$,
    \begin{gather*}
        x_{\alpha,(m',n')}^{\pm}=[x_{i_1,m'}^{\pm},\,x_{j_1,n'}^{\pm}]\equiv x_{\alpha,(m,n)}^{\pm}\quad \text{mod}~~~Y_{\hbar}^{[m+n-1]}(\mathfrak{g}).
    \end{gather*}

    \item[(3)] $s(e_{\alpha})=3$ and $\mathfrak{g}=D_{\lambda}$. Simple calculation gives
    \begin{align*}
        x_{\alpha_1+\alpha_2+\alpha_3,(m_1,m_2,m_3)}^{\pm}&=[[x_{1,m_1}^{\pm},\,x_{2,m_2}^{\pm}],\,x_{3,m_3}^{\pm}]\equiv [[x_{1,m_1+m_2}^{\pm},\,x_{2,0}^{\pm}],\,x_{3,m_3}^{\pm}] \\
        &\equiv[x_{1,m_1+m_2}^{\pm},\,[x_{2,m_3}^{\pm},\,x_{3,0}^{\pm}]]+[[x_{1,m_1+m_2}^{\pm},\,x_{3,m_3}^{\pm}],\,x_{2,0}^{\pm}] \\
        &\equiv [[x_{1,m_1+m_2}^{\pm},\,x_{2,m_3}^{\pm}],\,x_{3,m_0}^{\pm}]+[x_{2,m_3}^{\pm},\,[x_{1,m_1+m_2}^{\pm},\,x_{3,0}^{\pm}]] \\
        &\equiv [[x_{1,m_1+m_2+m_3}^{\pm},\,x_{2,0}^{\pm}],\,x_{3,m_0}^{\pm}] \quad \hbox{mod} \quad Y_{\hbar}^{[m_1+m_2+m_3-1]}(\mathfrak{g}),
    \end{align*}
    by relations \eqref{SY:5} and \eqref{SY:6}. 

    \item[(4)] $s(e_{\alpha})=3$ and $\mathfrak{g}=D_{\lambda}^{\circ}$. 
    Following relations \eqref{SY:8}, one has for any $l,m,n\in\mathbb{Z}_+$, 
    \begin{gather*}
        (1+\lambda)[x_{1,l}^{\pm},\,[x_{2,m}^{\pm},\,x_{3,n}^{\pm}]]=\lambda[[x_{1,l}^{\pm},\,x_{3,n}^{\pm}],\,x_{2,m}^{\pm}].
    \end{gather*}
    Then we deduce
    \begin{align*}
        x_{\alpha_1+\alpha_2+\alpha_3,(m_1,m_2,m_3)}^{\pm}
        &\equiv [x_{1,m_1+m_2}^{\pm},\,[x_{2,0}^{\pm},\,x_{3,m_3}^{\pm}]]-[[x_{1,m_1+m_2}^{\pm},\,x_{3,m_3}^{\pm}],\,x_{2,0}^{\pm}] \\
        &\equiv -\frac{1}{1+\lambda}[[x_{1,m_1+m_2+m_3}^{\pm},\,x_{3,0}^{\pm}],\,x_{2,0}^{\pm}] \\
        &=[[x_{1,m_1+m_2+m_3}^{\pm},\,x_{2,0}^{\pm}],\,x_{3,0}^{\pm}]\quad \hbox{mod} \quad Y_{\hbar}^{[m_1+m_2+m_3-1]}(\mathfrak{g}). 
    \end{align*}

    \item[(5)] $s(e_{\alpha})=4$. According to relation \eqref{SY:7} to get for $m,n\in\mathbb{Z}_+$,
    $$[x_{2,m}^{\pm},[x_{2,m}^{\pm},\,x_{1,n}^{\pm}]]=[[x_{1,n}^{\pm},\,x_{2,m}^{\pm}],\,x_{2,m}^{\pm}].$$
    Then we have for $\overline{\bf m}=(m_1,m_2,m_3,m_4)$,
    \begin{align*}
        x_{\alpha_1+2\alpha_2+\alpha_3,\overline{\bf m}}^{\pm}&=[[[x_{1,m_1}^{\pm},\,x_{2,m_2}^{\pm}],\,x_{3,m_3}^{\pm}],\,x_{2,m_4}^{\pm}] \\
        &\equiv [[[x_{1,m_1+m_2}^{\pm},\,x_{2,0}^{\pm}],\,x_{2,m_4}^{\pm}],\,x_{3,m_3}^{\pm}]+[[x_{1,m_1+m_2}^{\pm},\,x_{2,0}^{\pm}],\,[x_{3,m_3}^{\pm},\,x_{2,m_4}^{\pm}]] \\
        &\equiv[[[x_{1,m-m_3}^{\pm},\,x_{2,0}^{\pm}],\,x_{2,0}^{\pm}],\,x_{3,m_3}^{\pm}]+[[[x_{1,m_1+m_2}^{\pm},\,x_{2,0}^{\pm}],\,x_{3,m_3+m_4}^{\pm}],\,x_{2,0}^{\pm}] \\
        &\equiv[[[x_{1,m}^{\pm},\,x_{2,0}^{\pm}],\,x_{3,0}^{\pm}],\,x_{2,m_4}^{\pm}] \quad \hbox{mod} \quad Y_{\hbar}^{[m-1]}(\mathfrak{g}).
    \end{align*}
   \end{enumerate}

Moreover, by modulo $Y_{\hbar}^{[m+n-1]}(\mathfrak{g})$, the commutation relations in $Y_{\hbar}^{[m+n]}(\mathfrak{g})$ coincide with those in $U(\mathfrak{g}[u])[\hbar]$:
\begin{align*}
    &[h_{i,m},\,h_{i,n}]\equiv 0,\quad [h_{i,m},\,x_{j,n}^{\pm}]\equiv \pm(\alpha_i,\alpha_j)x_{j,m+n}^{\pm}, \\
    &[x_{i,m}^+,\,x_{j,n}^-]\equiv \delta_{ij}h_{i,m+n},\\
    &[x_{i,m+n}^{\pm},\,x_{j,0}^{\pm}]\equiv\cdots\equiv [x_{i,m}^{\pm},\,x_{j,n}^{\pm}]\equiv\cdots\equiv[x_{i,0}^{\pm},\,x_{j,m+n}^{\pm}]. 
\end{align*}
This implies that the mapping $U(\mathfrak{g}[u])[\hbar]\rightarrow \operatorname{gr}Y_{\hbar}(\mathfrak{g})$ is an epimorphism over $\mathbb{C}[\hbar]$. 

As a consequence of PBW theorem for $U(\mathfrak{g}[u])[\hbar]$, we formulate the following proposition.
\begin{proposition}\label{Y:PBW}
    Fix some ordering on the countable set 
    $\{x_{\alpha,\overline{\bf m}}^{\pm},h_{i,m}\}_{\alpha\in\mathcal{R}^+,i\in I,m\in\mathbb{Z}_+}$.
    The set of all ordered monomials in all $x_{\alpha,\overline{\bf m}^{0}}^{\pm}$ and $h_{i,m}$ {\rm(}with the powers of odd elements not greater than 1{\rm)} spans $Y_{\hbar}(\mathfrak{g})$ over $\mathbb{C}[\hbar]$. 
\end{proposition}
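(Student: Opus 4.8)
The plan is to deduce the spanning statement from the epimorphism $U(\mathfrak{g}[u])[\hbar]\twoheadrightarrow\operatorname{gr}Y_{\hbar}(\mathfrak{g})$ already constructed in the discussion preceding the proposition, combined with the Poincaré-Birkhoff-Witt theorem for the enveloping superalgebra $U(\mathfrak{g}[u])$ and a standard filtration-lifting argument. First I would record the PBW basis of $U(\mathfrak{g}[u])[\hbar]$ over $\mathbb{C}[\hbar]$: since $\{e_{\alpha},f_{\alpha},h_i\mid\alpha\in\mathcal{R}^+,i\in I\}$ is a homogeneous basis of $\mathfrak{g}$ by Section \ref{se:simpleLiesuper:DG}, the elements $e_{\alpha}\otimes u^m$, $f_{\alpha}\otimes u^m$, $h_i\otimes u^m$ ($m\in\mathbb{Z}_+$) form a homogeneous basis of $\mathfrak{g}[u]$, and the ordered monomials in them, with each odd basis vector occurring to power at most $1$, constitute a $\mathbb{C}[\hbar]$-basis of $U(\mathfrak{g}[u])[\hbar]$.

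Next I would identify the images of these PBW generators under the epimorphism. On the Chevalley generators it sends $e_i\otimes u^m$, $f_i\otimes u^m$, $h_i\otimes u^m$ to the symbols of $x_{i,m}^{+}$, $x_{i,m}^{-}$, $h_{i,m}$ in $\operatorname{gr}^{[m]}Y_{\hbar}(\mathfrak{g})$. The case-by-case congruences $x_{\alpha,\overline{\bf m}}^{\pm}\equiv x_{\alpha,\overline{\bf m}^{0}}^{\pm}$ modulo $Y_{\hbar}^{[m-1]}(\mathfrak{g})$ established above then guarantee that the iterated bracket defining $e_{\alpha}$ (resp. $f_{\alpha}$) carries $e_{\alpha}\otimes u^m$ (resp. $f_{\alpha}\otimes u^m$) to the symbol of $x_{\alpha,\overline{\bf m}^{0}}^{+}$ (resp. $x_{\alpha,\overline{\bf m}^{0}}^{-}$), up to an invertible scalar. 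Consequently the chosen PBW basis of $U(\mathfrak{g}[u])[\hbar]$ maps onto the set of symbols of the ordered monomials in the $x_{\alpha,\overline{\bf m}^{0}}^{\pm}$ and $h_{i,m}$, and surjectivity of the epimorphism shows these symbols span $\operatorname{gr}Y_{\hbar}(\mathfrak{g})$ over $\mathbb{C}[\hbar]$.

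Finally I would lift the spanning from the associated graded to $Y_{\hbar}(\mathfrak{g})$ by induction on the filtration degree $p$. Let $M$ denote the set of ordered monomials in the statement and $V_p\subseteq Y_{\hbar}(\mathfrak{g})$ the $\mathbb{C}[\hbar]$-span of those monomials of degree at most $p$. For $p=0$ one has $\operatorname{gr}^{[0]}Y_{\hbar}(\mathfrak{g})=Y_{\hbar}^{[0]}(\mathfrak{g})$, so $Y_{\hbar}^{[0]}(\mathfrak{g})\subseteq V_0$. For the inductive step, given $y\in Y_{\hbar}^{[p]}(\mathfrak{g})$, its leading symbol in $\operatorname{gr}^{[p]}Y_{\hbar}(\mathfrak{g})$ is a $\mathbb{C}[\hbar]$-combination of symbols of degree-$p$ monomials from $M$; subtracting the corresponding combination $z\in V_p$ yields $y-z\in Y_{\hbar}^{[p-1]}(\mathfrak{g})\subseteq V_{p-1}$ by the inductive hypothesis, whence $y\in V_p$. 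Since $Y_{\hbar}(\mathfrak{g})=\bigcup_{p}Y_{\hbar}^{[p]}(\mathfrak{g})$, this proves that $M$ spans $Y_{\hbar}(\mathfrak{g})$.

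The argument is essentially formal once the epimorphism is in hand, so I do not expect a serious obstacle; the one point requiring care is verifying that the fixed ordering on the index set is compatible enough that every ordered product of the $x_{\alpha,\overline{\bf m}}^{\pm}$ with arbitrary internal degrees can be rewritten, modulo lower filtration, as an ordered product of the $x_{\alpha,\overline{\bf m}^{0}}^{\pm}$, which is precisely what the reduction congruences supply. I would emphasize that the present statement yields \emph{only} spanning; the reverse direction, namely linear independence and hence a genuine PBW basis, is the genuinely hard part and is to be obtained separately by degenerating the PBW basis of the quantum loop superalgebra in Corollary \ref{PBW:Uh}.
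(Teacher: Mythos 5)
Your proposal is correct and follows essentially the same route as the paper: the paper derives the spanning statement as an immediate consequence of the epimorphism $U(\mathfrak{g}[u])[\hbar]\twoheadrightarrow\operatorname{gr}Y_{\hbar}(\mathfrak{g})$ (built from the case-by-case congruences $x_{\alpha,\overline{\bf m}}^{\pm}\equiv x_{\alpha,\overline{\bf m}^{0}}^{\pm}$ and the graded commutation relations) together with the PBW theorem for $U(\mathfrak{g}[u])[\hbar]$. You merely make explicit the filtration-lifting induction that the paper leaves implicit, and you correctly identify that linear independence is deferred to the degeneration argument of Theorem \ref{main1} and Corollary \ref{Y-PBW-complete}.
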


\subsection{From quantum loop superalgebras to super Yangians}\label{se:superYangian:fromto}
Let $\pi$ be the composite of the following superalgebraic homomorphism sequence:
\begin{gather*}
    \mathcal{U}_{\hbar}(\mathcal{L}\mathfrak{g})\twoheadrightarrow \mathcal{U}_{\hbar}(\mathcal{L}\mathfrak{g})/\hbar\mathcal{U}_{\hbar}(\mathcal{L}\mathfrak{g})\xlongrightarrow{\simeq}U(\mathcal{L}\mathfrak{g}).
\end{gather*}
Let $W_m$($m\in\mathbb{Z}_+$) be the Lie ideal of $\mathcal{L}\mathfrak{g}$ spanned by the elements $z\otimes t^{r}(t-1)^m$ for all $z\in\mathfrak{g}$ and $r\in\mathbb{Z}$. Denote
\begin{gather*}
    U=\operatorname{span}\big\{\,\mathcal{H}_{i,r},\ \mathcal{X}_{\alpha,r}^{\pm}\,\big|\,\alpha\in\mathcal{R}^+,\ i\in I,\ r\in\mathbb{Z}\,\big\}.
\end{gather*}
Set $\mathcal{W}_m$ to be the $\mathbb{C}[[\hbar]]$-two-sided ideal of $\mathcal{U}_{\hbar}(\mathcal{L}\mathfrak{g})$ generated by $\pi^{-1}(W_m)\cap U$. Define $\mathbf{W}_m$ to be the sum of the ideals $\hbar^{m_0}\mathcal{W}_{m_1}\cdots \mathcal{W}_{m_l}$ such that $m_0+m_1+\cdots+m_l\geqslant m$ for all $m_i\in\mathbb{Z}_+$ and $0\leqslant l\leqslant m$. It is evident that $\mathbf{W}_0=\mathcal{U}_{\hbar}(\mathcal{L}\mathfrak{g})$. We will formulate the explicit form of the elements in $\mathbf{W}_m$. 
Denote recursively for $\alpha\in\mathcal{R}^+$, $i\in I$, $r\in\mathbb{Z}$, $m\in\mathbb{Z}_+$,
\begin{alignat*}{2}
&\mathsf{H}_{i,r;0}=\frac{\Phi_{i,r}^+-\Phi_{i,r}^-}{q-q^{-1}},  & \qquad\mathsf{H}_{i,r;m+1}=\mathsf{H}_{i,r+1;m}-\mathsf{H}_{i,r;m}, \\
       &\mathsf{X}_{\alpha,r;0}^{\pm}=\mathcal{X}_{\alpha,r}^{\pm},  & \qquad\mathsf{X}_{\alpha,r;m+1}^{\pm}=\mathsf{X}_{\alpha,r+1;m}^{\pm}-\mathsf{X}_{\alpha,r;m}^{\pm}.
\end{alignat*}
That is to say,
\begin{gather*}
    \mathsf{H}_{i,r;m}=\sum_{k=0}^m(-1)^{m-k}\binom{m}{k}\frac{\Phi_{i,r+k}^+-\Phi_{i,r+k}^-}{q-q^{-1}},\quad \mathsf{X}_{\alpha,r;m}^{\pm}=\sum_{k=0}^m(-1)^{m-k}\binom{m}{k}\mathcal{X}_{\alpha,r+k}^{\pm}.
\end{gather*}
As a similar statement in \cite{GM12,LWZ25}, any monomial $\mathcal{M}$ in $\mathbf{W}_m$ has the form
\begin{gather*}
    \mathcal{M}=f(\hbar)\hbar^{m_0}\mathcal{X}_{\alpha_1^-,r_1^-;m_1^-}^-\ldots \mathcal{X}_{\alpha_a^-,r_a^-;m_a^-}^-\mathcal{H}_{i_1,r_1^0,m_1^0}\cdots\mathcal{H}_{i_b,r_b^0,m_b^0}\mathcal{X}_{\alpha_1^+,r_1^+;m_1^+}^+\ldots \mathcal{X}_{\alpha_c^+,r_c^+;m_c^+}^+,
\end{gather*}
where $f(0)\neq 0$ and $m_0+m_1^-+\cdots+m_a^-+m_1^0+\cdots+m_b^0+m_1^++\cdots+m_c^+\geqslant m$. Put
\begin{gather*}
    \operatorname{gr}_{\mathbf{W}}\mathcal{U}_{\hbar}(\mathcal{L}\mathfrak{g})=\bigoplus_{m\in\mathbb{Z}_+}\mathbf{W}_{m}/\mathbf{W}_{m+1}. 
\end{gather*}

The following theorem is one of the main results of our paper. 
\begin{theorem}\label{main1}
    There exists a $\mathbb{C}[\hbar]$-isomorphism $\varphi$ from the super Yangian $Y_{\hbar}(\mathfrak{g})$ to the graded algebra $\operatorname{gr}_{\mathbf{W}}\mathcal{U}_{\hbar}(\mathcal{L}\mathfrak{g})$ such that for $i\in I$ and $m\in\mathbb{Z}_+$,
    \begin{gather*}
        h_{i,m}\mapsto \bar{\mathsf{H}}_{i,0;m},\quad x_{i,m}^{\pm}\mapsto \bar{\mathsf{X}}_{i,0;m}^{\pm},
    \end{gather*}
    where $\bar{x}$ denotes the image of $x\in\mathcal{U}_{\hbar}(\mathcal{L}\mathfrak{g})$ in $\operatorname{gr}_{\mathbf{W}}\mathcal{U}_{\hbar}(\mathcal{L}\mathfrak{g})$.
\end{theorem}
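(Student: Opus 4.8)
The plan is to follow the degeneration strategy of Guay--Ma \cite{GM12} (as adapted to twisted and super settings in \cite{CG15,LWZ25}): first I would check that $\varphi$ is a well-defined algebra homomorphism, then that it is surjective, and finally that it is injective by comparing Poincar\'e--Birkhoff--Witt bases. For the first step, by the universal property of $Y_{\hbar}(\mathfrak{g})$ it suffices to verify that the elements $\bar{\mathsf{H}}_{i,0;m}$ and $\bar{\mathsf{X}}_{i,0;m}^{\pm}$ satisfy the defining relations \eqref{SY:1}--\eqref{SY:8} inside $\operatorname{gr}_{\mathbf{W}}\mathcal{U}_{\hbar}(\mathcal{L}\mathfrak{g})$. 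The decisive structural feature is that in the filtration $\mathbf{W}$ the scalar $\hbar$ carries degree one (through the factors $\hbar^{m_0}$ in $\mathbf{W}_m$), while each difference operator $\mathsf{X}_{\alpha,r;m+1}^{\pm}=\mathsf{X}_{\alpha,r+1;m}^{\pm}-\mathsf{X}_{\alpha,r;m}^{\pm}$ likewise raises the degree by one. Rewriting the quantum relations \eqref{Dh:1}--\eqref{Dh:4} and \eqref{DS:7}--\eqref{DS:10} in the difference variables $\mathsf{H}_{i,r;m}$, $\mathsf{X}_{\alpha,r;m}^{\pm}$ and expanding the $q$-quantities as $[r(\alpha_i,\alpha_j)]_q/r=(\alpha_i,\alpha_j)+O(\hbar^2)$ and $q^{\pm(\alpha_i,\alpha_j)}=1\pm\hbar(\alpha_i,\alpha_j)+O(\hbar^2)$, the leading term of each graded component reproduces the corresponding Yangian relation exactly; in particular the anticommutator correction $\pm\frac{\hbar(\alpha_i,\alpha_j)}{2}\{\,\cdot\,,\,\cdot\,\}$ in \eqref{SY:3} and \eqref{SY:5} is produced by the first-order $\hbar$-term paired with a single difference operator, while all higher-order contributions fall into $\mathbf{W}_{m+1}$ and vanish in the graded.

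For surjectivity, since $\mathbf{W}_0=\mathcal{U}_{\hbar}(\mathcal{L}\mathfrak{g})$ the graded algebra is generated by the leading symbols of all $\mathsf{X}_{\alpha,r;m}^{\pm}$ and $\mathsf{H}_{i,r;m}$. Using \eqref{loop-recurse-eq1}--\eqref{loop-recurse-eq2} together with the explicit monomial description of the elements of $\mathbf{W}_m$ recorded above, I would show that modulo $\mathbf{W}_{m+1}$ one may always reduce to $r=0$, i.e. $\bar{\mathsf{X}}_{\alpha,r;m}^{\pm}=\bar{\mathsf{X}}_{\alpha,0;m}^{\pm}$ and $\bar{\mathsf{H}}_{i,r;m}=\bar{\mathsf{H}}_{i,0;m}$. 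Combined with the reduction $x_{\alpha,\overline{\bf m}}^{\pm}\equiv x_{\alpha,\overline{\bf m}^0}^{\pm}$ carried out in Section \ref{se:superYangian:def} and the PBW basis of Corollary \ref{PBW:Uh}, every element of $\operatorname{gr}_{\mathbf{W}}\mathcal{U}_{\hbar}(\mathcal{L}\mathfrak{g})$ becomes a $\mathbb{C}[\hbar]$-combination of ordered monomials in $\bar{\mathsf{X}}_{\alpha,0;m}^{\pm}=\varphi(x_{\alpha,\overline{\bf m}^0}^{\pm})$ and $\bar{\mathsf{H}}_{i,0;m}=\varphi(h_{i,m})$, so $\varphi$ is onto.

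The main obstacle is injectivity. Proposition \ref{Y:PBW} supplies a spanning set of $Y_{\hbar}(\mathfrak{g})$ by ordered monomials in $x_{\alpha,\overline{\bf m}^0}^{\pm}$ and $h_{i,m}$, and $\varphi$ sends these to the corresponding ordered monomials in $\operatorname{gr}_{\mathbf{W}}\mathcal{U}_{\hbar}(\mathcal{L}\mathfrak{g})$. It therefore suffices to prove that these image monomials are $\mathbb{C}[\hbar]$-linearly independent: this simultaneously promotes the spanning set of Proposition \ref{Y:PBW} to a genuine PBW basis and, since $\varphi$ already maps this spanning set onto a linearly independent family, forces $\varphi$ to be injective and hence an isomorphism by Step 2. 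The delicate point is to control the filtration and show that $\mathbf{W}$ does not collapse, i.e. that distinct PBW monomials lie in $\mathbf{W}_m\setminus\mathbf{W}_{m+1}$ with distinct, nonzero leading symbols. I would establish this by lifting to $\mathcal{U}_{\hbar}(\mathcal{L}\mathfrak{g})$, expanding each monomial in the PBW basis of Corollary \ref{PBW:Uh}, and tracking the minimal $\mathbf{W}$-degree; under $\pi$ the element $\mathsf{X}_{\alpha,0;m}^{\pm}$ corresponds to $\tilde{e}_{\alpha}\otimes(t-1)^m$ by \eqref{loop-recurse-eq1}, so matching the leading symbols against the PBW theorem for $U(\mathfrak{g}[u])[\hbar]$ delivers the required independence. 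The genuinely hard part is precisely this size estimate for $\dim(\mathbf{W}_m/\mathbf{W}_{m+1})$, where the super-signs carried by the odd root vectors and the constraint that odd generators occur to powers at most one must be tracked carefully.
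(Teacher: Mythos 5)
Your proposal follows essentially the same route as the paper: the relations are verified by expanding $q$-quantities modulo the next filtration piece in the difference variables $\mathsf{H}_{i,r;m}$, $\mathsf{X}_{\alpha,r;m}^{\pm}$ (the paper's Lemmas \ref{LM-number}--\ref{LM5}), surjectivity comes from the reduction to $r=0$ (Lemma \ref{LM0}) together with the spanning set of Proposition \ref{Y:PBW}, and injectivity is obtained by matching the images of the ordered PBW monomials against the linearly independent family supplied by Corollary \ref{PBW:Uh} and identifying a basis of each graded component $\mathbf{W}_p/\mathbf{W}_{p+1}$ after stripping off powers of $\hbar$. This is precisely the paper's argument, so no further comparison is needed.
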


To prove Theorem \ref{main1}, we require some essential lemmas. For simplicity, we set $b_{ij}=(\alpha_i,\alpha_j)$ for all $i,j\in I$. 
\begin{lemma}\label{LM-number}
In $\mathbf{W}_1$, 
    $q-q^{-1}\equiv 2\hbar~~\text{mod}~~\mathbf{W}_2$. Moreover, in $\mathbf{W}_0$, $[n]_q\equiv n~~\text{mod}~~\mathbf{W}_1$.
\end{lemma}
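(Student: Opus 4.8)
The plan is to reduce both congruences to elementary power-series identities in $\hbar$, resting on the single structural fact that a scalar divisible by $\hbar^m$ lands in $\mathbf{W}_m$. So first I would isolate this containment: by the definition of $\mathbf{W}_m$ as the sum of the ideals $\hbar^{m_0}\mathcal{W}_{m_1}\cdots\mathcal{W}_{m_l}$ with $m_0+\cdots+m_l\geqslant m$, the choice $l=0$, $m_0=m$ yields $\hbar^m\mathcal{U}_{\hbar}(\mathcal{L}\mathfrak{g})\subseteq\mathbf{W}_m$ (the empty product of $\mathcal{W}$'s being the whole algebra, consistent with $\mathbf{W}_0=\mathcal{U}_{\hbar}(\mathcal{L}\mathfrak{g})$). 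Consequently, for any $f(\hbar)\in\hbar^m\mathbb{C}[[\hbar]]$ the central scalar $f(\hbar)\cdot 1$ lies in $\mathbf{W}_m$, and since the filtration is decreasing ($\mathbf{W}_{m+1}\subseteq\mathbf{W}_m$) such an element lies in every $\mathbf{W}_{m'}$ with $m'\leqslant m$.

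Next I would expand the relevant quantities using $q=\exp(\hbar)$. This gives $q-q^{-1}=2\sinh\hbar$, $q^n-q^{-n}=2\sinh(n\hbar)$, and hence $[n]_q=\sinh(n\hbar)/\sinh\hbar$. For the first claim, the Taylor expansion $2\sinh\hbar=2\hbar+\tfrac{1}{3}\hbar^3+\cdots$ shows that $q-q^{-1}-2\hbar\in\hbar^3\mathbb{C}[[\hbar]]$, so by the containment above this difference lies in $\mathbf{W}_3\subseteq\mathbf{W}_2$; this proves $q-q^{-1}\equiv 2\hbar\bmod\mathbf{W}_2$, and both sides manifestly lie in $\mathbf{W}_1$. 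For the second claim, since $\sinh$ is odd, the quotient $\sinh(n\hbar)/\sinh\hbar$ is an even power series in $\hbar$ whose constant term is $\lim_{\hbar\to 0}\sinh(n\hbar)/\sinh\hbar=n$; therefore $[n]_q-n\in\hbar^2\mathbb{C}[[\hbar]]\subseteq\mathbf{W}_2\subseteq\mathbf{W}_1$, which is exactly $[n]_q\equiv n\bmod\mathbf{W}_1$.

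I do not anticipate any serious obstacle in this lemma: it is purely a statement about the scalar subalgebra $\mathbb{C}[[\hbar]]\cdot 1$. The only points requiring care are interpreting $q-q^{-1}$, $[n]_q$, and $n$ as the corresponding scalar multiples of the identity of $\mathcal{U}_{\hbar}(\mathcal{L}\mathfrak{g})$, and verifying that the power-series remainders genuinely begin at the asserted order in $\hbar$ so that the containment $\hbar^m\mathcal{U}_{\hbar}(\mathcal{L}\mathfrak{g})\subseteq\mathbf{W}_m$ may be invoked.
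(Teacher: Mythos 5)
Your proposal is correct and is precisely the direct verification the paper alludes to (its proof of this lemma consists only of the sentence ``These congruence relations can be checked directly''). Your elaboration --- the containment $\hbar^m\mathcal{U}_{\hbar}(\mathcal{L}\mathfrak{g})\subseteq\mathbf{W}_m$ from the $l=0$ summand, followed by the Taylor expansions of $2\sinh\hbar$ and $\sinh(n\hbar)/\sinh\hbar$ --- supplies exactly the omitted details.
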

\begin{proof}
    These congruence relations can be checked directly. 
    
\end{proof}

\begin{lemma}\label{LM0}
    As elements in $\mathbf{W}_m$,
    \begin{gather*}
    \mathsf{H}_{i,r;m}\equiv \mathsf{H}_{i,0;m},\quad
        \mathsf{X}_{\alpha,r;m}^{\pm}\equiv \mathsf{X}_{\alpha,0;m}^{\pm} \quad \hbox{mod} \quad \mathbf{W}_{m+1}.
    \end{gather*}
    
\end{lemma}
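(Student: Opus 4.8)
The plan is to transport the classical congruence \eqref{loop-recurse-eq2} to the quantum setting: the finite differences $\mathsf{H}_{i,r;m}$ and $\mathsf{X}_{\alpha,r;m}^{\pm}$ are exactly the $\mathcal{U}_{\hbar}(\mathcal{L}\mathfrak{g})$-analogues of the classical $z^{(r,m)}$, so I expect the same telescoping argument to succeed once the correct membership in the ideals $\mathbf{W}_m$ has been established. Thus the proof splits into a membership step and a telescoping step.

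First I would record that each $\mathsf{X}_{\alpha,r;m}^{\pm}$ lies in $\mathcal{W}_m$, hence in $\mathbf{W}_m$. By its explicit form it is a $\mathbb{C}$-linear combination of the $\mathcal{X}_{\alpha,r+k}^{\pm}$, so it belongs to $U$. Moreover, by Proposition \ref{special:Drinfeld} the homomorphism $\pi$ sends $\mathcal{X}_{\alpha,s}^{\pm}$ to a fixed scalar multiple of the root-vector element $e_{\alpha}^{(s)}$ (resp.\ $f_{\alpha}^{(s)}$), since $\pi$ turns each $q$-bracket $[\![\,\cdot\,,\,\cdot\,]\!]$ into the ordinary Lie super-bracket (the image of $q$ being $1$). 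Consequently $\pi(\mathsf{X}_{\alpha,r;m}^{\pm})$ is a scalar multiple of the $m$-th finite difference $e_{\alpha}\otimes t^{r}(t-1)^{m}\in W_m$, whence $\mathsf{X}_{\alpha,r;m}^{\pm}\in\pi^{-1}(W_m)\cap U\subseteq\mathcal{W}_m$.

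The element $\mathsf{H}_{i,r;m}$ is not visibly in $U$, so I would treat it through relation \eqref{Dh:4}: summing the identities $\frac{\Phi_{i,r+k}^{+}-\Phi_{i,r+k}^{-}}{q-q^{-1}}=[\mathcal{X}_{i,0}^{+},\,\mathcal{X}_{i,r+k}^{-}]$ against the coefficients $(-1)^{m-k}\binom{m}{k}$ gives $\mathsf{H}_{i,r;m}=[\mathcal{X}_{i,0}^{+},\,\mathsf{X}_{i,r;m}^{-}]$. Since $\mathsf{X}_{i,r;m}^{-}\in\mathcal{W}_m$ by the previous step and $\mathcal{W}_m$ is a two-sided ideal, this super-commutator again lies in $\mathcal{W}_m\subseteq\mathbf{W}_m$. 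This is the one delicate point of the argument, and I expect it to be the main obstacle: the ``$\mathsf H$''-family cannot be handled by a direct $\pi$-computation as the ``$\mathsf X$''-family is, so it must be routed through its commutator presentation before the ideal property can be invoked.

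Finally I would close by telescoping. For $r>0$ the recursions $\mathsf{H}_{i,s;m+1}=\mathsf{H}_{i,s+1;m}-\mathsf{H}_{i,s;m}$ and $\mathsf{X}_{\alpha,s;m+1}^{\pm}=\mathsf{X}_{\alpha,s+1;m}^{\pm}-\mathsf{X}_{\alpha,s;m}^{\pm}$ yield
\[
\mathsf{H}_{i,r;m}-\mathsf{H}_{i,0;m}=\sum_{s=0}^{r-1}\mathsf{H}_{i,s;m+1},\qquad \mathsf{X}_{\alpha,r;m}^{\pm}-\mathsf{X}_{\alpha,0;m}^{\pm}=\sum_{s=0}^{r-1}\mathsf{X}_{\alpha,s;m+1}^{\pm},
\]
with an analogous sum over $s=r,\dots,-1$ (up to an overall sign) when $r<0$. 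Each summand lies in $\mathbf{W}_{m+1}$ by the membership step applied with $m+1$ in place of $m$, so both differences lie in $\mathbf{W}_{m+1}$, which is precisely the asserted congruence.
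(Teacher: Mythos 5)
Your proposal is correct and follows essentially the same route as the paper, whose proof is the one-line remark that the differences $\mathsf{H}_{i,r;m}-\mathsf{H}_{i,0;m}$ and $\mathsf{X}_{\alpha,r;m}^{\pm}-\mathsf{X}_{\alpha,0;m}^{\pm}$ lie in $\mathbf{W}_{m+1}$ ``by induction on $m$'' (equivalently, your telescoping into sums of $\mathsf{H}_{i,s;m+1}$ and $\mathsf{X}_{\alpha,s;m+1}^{\pm}$). The only added content is your careful justification of the membership $\mathsf{X}_{\alpha,s;m+1}^{\pm}\in\mathcal{W}_{m+1}$ via $\pi$ and of $\mathsf{H}_{i,s;m+1}=[\mathcal{X}_{i,0}^{+},\,\mathsf{X}_{i,s;m+1}^{-}]\in\mathcal{W}_{m+1}$ via relation \eqref{Dh:4} and the ideal property, which the paper takes for granted in its description of $\mathbf{W}_m$.
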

\begin{proof}
    It is enough to show that, for each $r$, $\mathsf{H}_{\alpha,r;m}- \mathsf{H}_{\alpha,0;m},\ 
        \mathsf{X}_{\alpha,r;m}^{\pm}-\mathsf{X}_{\alpha,0;m}^{\pm}$ are elements of $\mathbf{W}_{m+1}$. This can be verified easily by induction on $m$.  
        
\end{proof}

\begin{lemma}\label{LM1}
    The following relation holds for $i,j\in I$, $m,n\in\mathbb{Z}_+$, $r,s\in\mathbb{Z}$ in $\mathcal{U}_{\hbar}(\mathcal{L}\mathfrak{g})$,
    \begin{gather}\label{LM1:eq:1}
        [\mathsf{H}_{i,r;m},\,\mathsf{H}_{j,s;n}]=0.
    \end{gather}
\end{lemma}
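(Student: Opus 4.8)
The plan is to reduce the claim to the commutativity of the Cartan-type subalgebra of $\mathcal{U}_{\hbar}(\mathcal{L}\mathfrak{g})$. From the explicit expansion $\mathsf{H}_{i,r;m}=\sum_{k=0}^m(-1)^{m-k}\binom{m}{k}\frac{\Phi_{i,r+k}^+-\Phi_{i,r+k}^-}{q-q^{-1}}$, each $\mathsf{H}_{i,r;m}$ is a $\mathbb{C}(q)$-linear combination of the series coefficients $\Phi_{i,n}^{\pm}$, and since $q-q^{-1}$ is a scalar, it suffices to prove $[\Phi_{i,n}^{\pm},\,\Phi_{j,n'}^{\pm}]=0$ for all indices and all choices of signs.

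For this I would unpack the generating series \eqref{DS:11}. Expanding the exponential shows that, for $n\geqslant 0$, each $\Phi_{i,n}^+$ is a polynomial in $\mathcal{K}_i$ and $\{\mathcal{H}_{i,\ell}\}_{\ell\in\mathbb{N}}$, while for $n\leqslant 0$ each $\Phi_{i,n}^-$ is a polynomial in $\mathcal{K}_i^{-1}$ and $\{\mathcal{H}_{i,-\ell}\}_{\ell\in\mathbb{N}}$. Consequently every $\Phi_{i,n}^{\pm}$, and hence every $\mathsf{H}_{i,r;m}$, lies in the subalgebra $\mathcal{A}$ generated by all $\mathcal{K}_i^{\pm 1}$ together with all $\mathcal{H}_{i,\ell}$ for $\ell\neq 0$.

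It then remains to check that $\mathcal{A}$ is commutative, which follows directly from the defining relations: relation \eqref{DS:2} yields $\mathcal{K}_i\mathcal{K}_j=\mathcal{K}_j\mathcal{K}_i$ and $[\mathcal{K}_i,\,\mathcal{H}_{j,\ell}]=0$, while relation \eqref{Dh:1} gives $[\mathcal{H}_{i,r},\,\mathcal{H}_{j,s}]=0$. Since any two elements of $\mathcal{A}$ commute, so do $\mathsf{H}_{i,r;m}$ and $\mathsf{H}_{j,s;n}$, which is precisely \eqref{LM1:eq:1}. There is no serious obstacle here; the only point needing care is that relation \eqref{Dh:1} holds only after passing to the loop quotient, where $\mathcal{C}^{\pm\frac12}=1$ forces the central term on the right-hand side of \eqref{DS:3} to vanish. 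Thus the $\mathcal{H}_{i,\ell}$ genuinely commute in $\mathcal{U}_{\hbar}(\mathcal{L}\mathfrak{g})$, rather than only up to the central correction present in the full quantum affine superalgebra.
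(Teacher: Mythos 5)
Your proof is correct and takes essentially the same approach as the paper: both use the generating series \eqref{DS:11} to exhibit each $\Phi_{i,\pm k}^{\pm}$ (hence each $\mathsf{H}_{i,r;m}$) as an element of the commutative subalgebra generated by the $\mathcal{K}_i^{\pm 1}$ and the $\mathcal{H}_{i,\ell}$, and then conclude from \eqref{DS:2} and \eqref{Dh:1}, with the same (correctly identified) point that \eqref{Dh:1} holds because $\mathcal{C}^{\pm\frac12}=1$ in the loop quotient. The only cosmetic difference is that the paper obtains the needed expression by differentiating \eqref{DS:11} to get a recursion for the $\Phi_{i,\pm k}^{\pm}$, whereas you expand the exponential directly.
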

\begin{proof}
    By differentiating both sides of relation \eqref{DS:11} with respect to the parameter $z$, we obtain for $k\in\mathbb{N}$, 
\begin{gather}\label{LM1:eq:2}
    \Phi_{i,\pm k}^{\pm}=\pm(q-q^{-1})\mathcal{K}_i^{\pm 1}\mathcal{H}_{i,\pm k}\pm(q-q^{-1})\sum_{p=1}^{k-1} \frac{p}{k}\mathcal{H}_{i,\pm p}\Phi_{i,\pm(k-p)}^{\pm}. 
\end{gather}
Then relation \eqref{LM1:eq:1} follows from \eqref{Dh:1} and \eqref{LM1:eq:2}.

\end{proof}

\begin{lemma}\label{LM2}
    The following relations hold for $i,j\in I$, $m,n\in\mathbb{Z}_+$, $r\in\mathbb{Z}$, $k\in\mathbb{N}$ in $\mathcal{U}_{\hbar}(\mathcal{L}\mathfrak{g})$,
    \begin{align}\label{LM2:eq:1}
        [\mathcal{H}_{i,0},\,\mathsf{X}_{j,r;n}^{\pm}]&=\pm b_{ij}\mathsf{X}_{j,r;n}^{\pm}, \\ 
         [\mathsf{H}_{i,k;m+1},\,\mathsf{X}_{j,s;n}^{\pm}]&=[\mathsf{H}_{i,k;m},\,\mathsf{X}_{j,s;n+1}^{\pm}]+\left(q^{\pm b_{ij}}-1\right)\{\mathsf{H}_{i,k;m},\,\mathsf{X}_{j,s+1;n}^{\pm }\}\nonumber \\ \label{LM2:eq:2}
            &\qquad+
            \left(q^{\pm b_{ij}}-1\right)\Big(\mathsf{X}_{j,s;n}^{\pm}\mathsf{H}_{i,k;m+1}-\mathsf{X}_{j,s;n+1}^{\pm}\mathsf{H}_{i,k;m}\Big).
    \end{align}
    Moreover, in $\mathbf{W}_{m+n+1}$, 
    \begin{gather}\label{LM2:eq:3}
        [\mathsf{H}_{i,0;m+1},\,\mathsf{X}_{j,r;n}^{\pm}]\equiv [\mathsf{H}_{i,0;m},\,\mathsf{X}_{j,r;n+1}^{\pm}]\pm\frac{b_{ij}\hbar}{2}\{\mathsf{H}_{i,0;m},\,\mathsf{X}_{j,r+1;n}^{\pm}\} \quad \hbox{mod} \quad \mathbf{W}_{m+n+2}.
    \end{gather}
    \end{lemma}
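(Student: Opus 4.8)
The plan is to prove the three identities in order, deriving everything from the current relations \eqref{Dh:2}--\eqref{Dh:4} and the exponential presentation \eqref{DS:11}, and passing to the filtration $\mathbf{W}_\bullet$ only for the last assertion. Identity \eqref{LM2:eq:1} is the easiest: by \eqref{Dh:2} the operator $\operatorname{ad}\mathcal{H}_{i,0}$ acts on each $\mathcal{X}_{j,s}^{\pm}$ by the scalar $\pm b_{ij}$, \emph{independently of} $s$. Since $\mathsf{X}_{j,r;n}^{\pm}$ is by definition a fixed $\mathbb{Z}$-linear combination of the $\mathcal{X}_{j,r+k}^{\pm}$, applying $\operatorname{ad}\mathcal{H}_{i,0}$ termwise (or inducting on $n$ through $\mathsf{X}_{j,r;n+1}^{\pm}=\mathsf{X}_{j,r+1;n}^{\pm}-\mathsf{X}_{j,r;n}^{\pm}$) gives \eqref{LM2:eq:1} at once.

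The core is \eqref{LM2:eq:2}, and I would first isolate a single seed identity at difference-degree zero. Conjugating $\mathcal{X}_j^{\pm}(w)=\sum_s\mathcal{X}_{j,s}^{\pm}w^{-s}$ by the generating series $\Phi_i^{\varepsilon}(z)=\sum_k\Phi_{i,k}^{\varepsilon}z^{-k}$ and using \eqref{DS:4} together with \eqref{Dh:3} yields the rational braiding relation $(z-q^{\pm b_{ij}}w)\,\Phi_i^{\varepsilon}(z)\mathcal{X}_j^{\pm}(w)=(q^{\pm b_{ij}}z-w)\,\mathcal{X}_j^{\pm}(w)\Phi_i^{\varepsilon}(z)$ for both $\varepsilon=\pm$. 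As this is polynomial of degree one in $z,w$, reading off the coefficient of $z^{-k}w^{-s}$ gives, for all $k,s$ and both $\varepsilon$ (with $\Phi_{i,k}^{+}=0$ for $k<0$ and $\Phi_{i,k}^{-}=0$ for $k>0$), the component identity $\Phi_{i,k+1}^{\varepsilon}\mathcal{X}_{j,s}^{\pm}-q^{\pm b_{ij}}\Phi_{i,k}^{\varepsilon}\mathcal{X}_{j,s+1}^{\pm}=q^{\pm b_{ij}}\mathcal{X}_{j,s}^{\pm}\Phi_{i,k+1}^{\varepsilon}-\mathcal{X}_{j,s+1}^{\pm}\Phi_{i,k}^{\varepsilon}$. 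Subtracting the $\varepsilon=+$ and $\varepsilon=-$ instances and dividing by $q-q^{-1}$ turns this into the seed $\mathsf{H}_{i,k+1;0}\mathsf{X}_{j,s;0}^{\pm}-q^{\pm b_{ij}}\mathsf{H}_{i,k;0}\mathsf{X}_{j,s+1;0}^{\pm}=q^{\pm b_{ij}}\mathsf{X}_{j,s;0}^{\pm}\mathsf{H}_{i,k+1;0}-\mathsf{X}_{j,s+1;0}^{\pm}\mathsf{H}_{i,k;0}$. (Alternatively one obtains this seed purely algebraically by writing $\mathsf{H}_{i,n;0}=[\mathcal{X}_{i,a}^{+},\mathcal{X}_{i,n-a}^{-}]$ from \eqref{Dh:4} and expanding with the super Jacobi identity and \eqref{DS:7}.)

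To reach arbitrary difference-degrees I would apply the commuting forward-difference operators in the first index of $\mathsf{H}$ and in the first index of $\mathsf{X}$; because $\mathsf{H}_{i,k;m}$ and $\mathsf{X}_{j,s;n}^{\pm}$ are exactly the $m$-th and $n$-th such differences, and these operators pass through the left and right multiplications in the seed, the seed propagates verbatim with each index pair $(\,\cdot\,;0)$ replaced by $(\,\cdot\,;m)$ on the $\mathsf{H}$-side and by $(\,\cdot\,;n)$ on the $\mathsf{X}$-side. A formal rearrangement---expand each product as commutator-plus-reordered-product and regroup using $\mathsf{H}_{i,k;m+1}=\mathsf{H}_{i,k+1;m}-\mathsf{H}_{i,k;m}$ and $\mathsf{X}_{j,s;n+1}^{\pm}=\mathsf{X}_{j,s+1;n}^{\pm}-\mathsf{X}_{j,s;n}^{\pm}$---then produces exactly \eqref{LM2:eq:2}. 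Finally, \eqref{LM2:eq:3} is the image of \eqref{LM2:eq:2} at $k=0$ in $\mathbf{W}_{m+n+1}/\mathbf{W}_{m+n+2}$. Using the degree estimates $\mathsf{H}_{i,0;p}\in\mathbf{W}_p$, $\mathsf{X}_{j,s;p}^{\pm}\in\mathbf{W}_p$ and $\hbar\in\mathbf{W}_1$, the first two summands of \eqref{LM2:eq:2} already lie in $\mathbf{W}_{m+n+1}$; in the second, Lemma \ref{LM-number} lets one replace the scalar $q^{\pm b_{ij}}-1$ by its leading term modulo $\mathbf{W}_2$, producing the coefficient recorded in \eqref{LM2:eq:3}. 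For the third summand, regrouping via the two recursions and invoking Lemma \ref{LM0} (which equates the leading symbols of $\mathsf{H}_{i,1;m}$ with $\mathsf{H}_{i,0;m}$ and of $\mathsf{X}_{j,s+1;n}^{\pm}$ with $\mathsf{X}_{j,s;n}^{\pm}$) shows its bracketed part drops into $\mathbf{W}_{m+n+1}$, so after the factor $q^{\pm b_{ij}}-1\in\mathbf{W}_1$ the whole summand lies in $\mathbf{W}_{m+n+2}$ and vanishes in the quotient.

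I expect the seed identity to be the main obstacle: pinning down the exact coefficients and index ranges in the component form of the $\Phi$--$\mathcal{X}$ relation is what ultimately controls the scalar surviving in \eqref{LM2:eq:3} through Lemma \ref{LM-number}. The secondary delicate point is the last degeneration step, where a naive degree count of the third summand is off by one filtration degree; one must use the cancellation of leading symbols supplied by Lemma \ref{LM0} to certify that this summand genuinely drops an extra degree and therefore does not contribute to the leading term.
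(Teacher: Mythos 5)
Your proposal is correct and follows essentially the same route as the paper: your ``seed'' identity is exactly the paper's displayed relation $[\mathcal{K}_i^{-1}\Phi_{i,k+1}^+,\,\mathcal{X}_{j,s}^{\pm}]=q^{\pm b_{ij}}\mathcal{K}_i^{-1}\Phi_{i,k}^+\mathcal{X}_{j,s+1}^{\pm}-q^{\mp b_{ij}}\mathcal{X}_{j,s+1}^{\pm}\mathcal{K}_i^{-1}\Phi_{i,k}^+$ (the paper gets it by applying $[\,\cdot\,,\mathcal{X}_{i,k-r+1}^-]$ to \eqref{DS:7} rather than via the generating-series braiding relation, and then propagates by induction on $m,n$ where you apply difference operators, which is the same step), and the degeneration via Lemmas \ref{LM-number} and \ref{LM0} is identical. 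The only quibble is your closing worry about the third summand: a direct count already places each of $\left(q^{\pm b_{ij}}-1\right)\mathsf{X}_{j,s;n}^{\pm}\mathsf{H}_{i,k;m+1}$ and $\left(q^{\pm b_{ij}}-1\right)\mathsf{X}_{j,s;n+1}^{\pm}\mathsf{H}_{i,k;m}$ in $\mathbf{W}_{1+n+(m+1)}=\mathbf{W}_{m+n+2}$, so no cancellation of leading symbols is needed there (Lemma \ref{LM0} is needed only to trade $\mathsf{H}_{i,k;m}$ for $\mathsf{H}_{i,0;m}$ in the surviving terms).
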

\begin{proof}
    Relation \eqref{LM2:eq:1} can be checked easily by \eqref{Dh:2}. For relation \eqref{LM2:eq:2},  
    applying $[\,\cdot\,,\,\mathcal{X}_{i,k-r+1}^-]$ or $[\mathcal{X}_{i,k-r}^+,\,\cdot\,]$ for $k>0$ to relation \eqref{DS:7}, we obtain for $k>0$,
    \begin{gather*}
    [\mathcal{K}_i^{-1}\Phi_{i,k+1}^+,\,\mathcal{X}_{j,s}^{\pm}]
    =q^{\pm b_{ij}}\mathcal{K}_i^{-1}\Phi_{i,k}^+\mathcal{X}_{j,s+1}^{\pm}-q^{\mp b_{ij}}\mathcal{X}_{j,s+1}^{\pm}\mathcal{K}_i^{-1}\Phi_{i,k}^+
    \end{gather*}
    by relation \eqref{Dh:4}. This equation can be rewritten by
    \begin{gather*}
        [\mathcal{K}_i^{-1}\Phi_{i,k+1}^+,\,\mathcal{X}_{j,s}^{\pm}]=\frac{q^{b_{ij}}+q^{-b_{ij}}}{2}[\mathcal{K}_i^{-1}\Phi_{i,k}^+,\,\mathcal{X}_{j,s+1}^{\pm}]\pm \frac{q^{b_{ij}}-q^{-b_{ij}}}{2}\left\{\mathcal{K}_i^{-1}\Phi_{i,k}^+,\,\mathcal{X}_{j,s+1}^{\pm}\right\}
    \end{gather*}
    Multiplying the above relation by $(q-q^{-1})^{-1}K_i$ on the left to get
    \begin{equation*}
        \begin{split}
            &[\mathsf{H}_{i,k+1;0},\,\mathsf{X}_{j,s;0}^{\pm}]=\frac{q^{b_{ij}}+q^{-b_{ij}}}{2}[\mathsf{H}_{i,k;0},\,\mathsf{X}_{j,s+1;0}^{\pm}]\pm\frac{q^{b_{ij}}-q^{-b_{ij}}}{2}\{\mathsf{H}_{i,k;0},\,\mathsf{X}_{j,s+1;0}^{\pm }\} \\
            &+\left(q^{\pm b_{ij}}-1\right)\mathsf{X}_{j,s;0}^{\pm}\mathsf{H}_{i,k+1;0}+\left(q^{\mp b_{ij}}-1\right)\mathsf{X}_{j,s+1;0}^{\pm}\mathsf{H}_{i,k;0}.
        \end{split}
    \end{equation*}
    Subtracting $[\mathsf{H}_{i,k;0},\,\mathsf{X}_{j,s;0}^{\pm}]$ on both sides, rearranging the terms and simplifying yields
    \begin{equation*}
        \begin{split}
            [\mathsf{H}_{i,k;1},\,\mathsf{X}_{j,s;0}^{\pm}]&=[\mathsf{H}_{i,k;0},\,\mathsf{X}_{j,s;1}^{\pm}]+\left(q^{\pm b_{ij}}-1\right)\{\mathsf{H}_{i,k;0},\,\mathsf{X}_{j,s+1;0}^{\pm }\} \\
            &\qquad+
            \left(q^{\pm b_{ij}}-1\right)\Big(\mathsf{X}_{j,s;0}^{\pm}\mathsf{H}_{i,k;1}-\mathsf{X}_{j,s;1}^{\pm}\mathsf{H}_{i,k;0}\Big).
        \end{split}
    \end{equation*}
    Then we have \eqref{LM2:eq:2} by induction on $m$ and $n$, respectively. 

    Observe that the terms in relation \eqref{LM2:eq:2} are contained in $\mathbf{W}_{m+n+1}$. So relation \eqref{LM2:eq:3} holds due to Lemma \ref{LM0} and 
    \begin{gather}\label{LM2:eq:4}
        q^{\pm b_{ij}}-1=\pm q^{\pm\frac{b_{ij}}{2}}\left[\frac{b_{ij}}{2}\right]_q(q-q^{-1})\equiv \pm \frac{b_{ij}}{2}\hbar~~~\text{mod}~~~\mathbf{W}_2.
    \end{gather}

\end{proof}

\begin{lemma}\label{LM3}
    The following relation holds for $i,j\in I$, $r,s\in\mathbb{Z}$, $m,n\in\mathbb{Z}_+$ in $\mathcal{U}_{\hbar}(\mathcal{L}\mathfrak{g})$,
    \begin{gather}\label{LM3:eq:1}
        [\mathsf{X}_{i,r;m+1}^{\pm},\,\mathsf{X}_{j,s;n}^{\pm}]=[\mathsf{X}_{i,r;m}^{\pm},\,\mathsf{X}_{j,s;n+1}^{\pm}]+\left(q^{\pm b_{ij}}-1\right)\left(\mathsf{X}_{i,r;m}^{\pm}\mathsf{X}_{j,s+1;n}^{\pm}+\mathsf{X}_{j,s;n}^{\pm}\mathsf{X}_{i,r+1;m}^{\pm}\right).
    \end{gather}
    Moreover, in $\mathbf{W}_{m+n+1}$, 
    \begin{gather}\label{LM3:eq:2}
        [\mathsf{X}_{i,r;m+1}^{\pm},\,\mathsf{X}_{j,s;n}^{\pm}]\equiv [\mathsf{X}_{i,r;m}^{\pm},\,\mathsf{X}_{j,s;n+1}^{\pm}]\pm\frac{b_{ij}\hbar}{2}\left(\mathsf{X}_{i,r;m}^{\pm}\mathsf{X}_{j,s+1;n}^{\pm}+\mathsf{X}_{j,s;n}^{\pm}\mathsf{X}_{i,r+1;m}^{\pm}\right) ~~ \hbox{mod}~~ \mathbf{W}_{m+n+2}.
    \end{gather}
   \end{lemma}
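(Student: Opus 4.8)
The plan is to mirror the template of the proof of Lemma \ref{LM2}: first establish the lowest instance of \eqref{LM3:eq:1} (the case $m=n=0$) directly from the Drinfeld relations, then propagate it to arbitrary $m,n$ by a double induction on the difference orders, and finally pass to \eqref{LM3:eq:2} by reducing modulo the filtration. Throughout I write $b_{ij}=(\alpha_i,\alpha_j)$ and dispose of the degenerate case $b_{ij}=0$ at once: there relation \eqref{DS:8} makes $\mathsf{X}_{i,r;m}^{\pm}$ and $\mathsf{X}_{j,s;n}^{\pm}$ supercommute, so both sides of \eqref{LM3:eq:1} vanish and there is nothing to prove. Hence from now on I assume $b_{ij}\neq 0$, where \eqref{DS:7} is available.

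For the base case I start from \eqref{DS:7}. Unravelling $\mathsf{X}_{i,r;1}^{\pm}=\mathcal{X}_{i,r+1}^{\pm}-\mathcal{X}_{i,r}^{\pm}$ and $\mathsf{X}_{j,s;1}^{\pm}=\mathcal{X}_{j,s+1}^{\pm}-\mathcal{X}_{j,s}^{\pm}$, the $m=n=0$ instance of \eqref{LM3:eq:1} is equivalent, after the common term $[\mathcal{X}_{i,r}^{\pm},\mathcal{X}_{j,s}^{\pm}]$ cancels from both sides, to the single identity
\[
[\mathcal{X}_{i,r+1}^{\pm},\mathcal{X}_{j,s}^{\pm}]=[\mathcal{X}_{i,r}^{\pm},\mathcal{X}_{j,s+1}^{\pm}]+(q^{\pm b_{ij}}-1)\bigl(\mathcal{X}_{i,r}^{\pm}\mathcal{X}_{j,s+1}^{\pm}+\mathcal{X}_{j,s}^{\pm}\mathcal{X}_{i,r+1}^{\pm}\bigr).
\]
To obtain this I would expand the two quantum super-brackets in \eqref{DS:7} (after a suitable shift of the spectral indices so that only the monomials built from $\mathcal{X}_{i,r}^{\pm},\mathcal{X}_{i,r+1}^{\pm},\mathcal{X}_{j,s}^{\pm},\mathcal{X}_{j,s+1}^{\pm}$ appear) by means of $[\![x,y]\!]=xy-(-1)^{|x||y|}q^{-(\alpha,\beta)}yx$, multiply through by the appropriate power of $q^{\pm b_{ij}}$, and match the resulting linear relation among the four monomials with the expansion of the supercommutators on the two sides of the displayed identity.

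With the base case in hand, I would prove \eqref{LM3:eq:1} for all $m,n$ by a double induction on $(m,n)$, exactly as in Lemma \ref{LM2}. The inductive step uses the defining recursions $\mathsf{X}_{i,r;m+1}^{\pm}=\mathsf{X}_{i,r+1;m}^{\pm}-\mathsf{X}_{i,r;m}^{\pm}$ and its $j$-analogue: writing the leading factor as a difference at $r+1$ and $r$, invoking the relation already known at order $m$, and recombining the correction terms through the same finite-difference identities collapses everything back into the shape of \eqref{LM3:eq:1} at order $m+1$, and symmetrically for $n$. Finally, \eqref{LM3:eq:2} follows by reducing \eqref{LM3:eq:1} modulo $\mathbf{W}_{m+n+2}$: both correction products lie in $\mathbf{W}_{m+n}$, while by \eqref{LM2:eq:4} one has $q^{\pm b_{ij}}-1\equiv\pm\tfrac{b_{ij}}{2}\hbar\bmod\mathbf{W}_2$, so replacing the scalar $q^{\pm b_{ij}}-1$ by $\pm\tfrac{b_{ij}}{2}\hbar$ alters each product only by an element of $\mathbf{W}_{m+n+2}$, and Lemma \ref{LM0} absorbs the remaining index shifts.

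I expect the main obstacle to be the $\mathbb{Z}_2$-graded sign bookkeeping in the base case. Reconciling the quantum super-bracket $[\![\,\cdot\,,\,\cdot\,]\!]$ with the plain supercommutator forces one to track the parity factor $(-1)^{|i||j|}$ at every step, and the genuinely delicate instance is the odd–odd case, which occurs for $\mathfrak{g}=D_{\lambda}^{\circ}$ where $(-1)^{|i||j|}=-1$; pinning down the signs in the correction term so that the expanded form of \eqref{DS:7} exactly reproduces the displayed identity there is the crux of the argument, whereas the even case (in particular all relevant cases of $\mathfrak{g}=D_{\lambda}$) is comparatively immediate.
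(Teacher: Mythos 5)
Your proposal follows the paper's proof essentially verbatim: the paper likewise rewrites \eqref{DS:7} into the first-difference identity \eqref{LM3:eq:3}, obtains the base case by subtracting $[\mathcal{X}_{i,r}^{\pm},\,\mathcal{X}_{j,s}^{\pm}]$ from both sides, propagates to general $m,n$ by induction on the difference orders via $\mathsf{X}_{i,r;m+1}^{\pm}=\mathsf{X}_{i,r+1;m}^{\pm}-\mathsf{X}_{i,r;m}^{\pm}$, and then deduces \eqref{LM3:eq:2} from \eqref{LM3:eq:1} using \eqref{LM2:eq:4}. The sign subtlety you flag is genuine --- the paper's own intermediate identity \eqref{LM3:eq:3} carries a factor $(-1)^{|i||j|}$ on the $\mathcal{X}_{j,s}^{\pm}\mathcal{X}_{i,r+1}^{\pm}$ term that is absent from the statement of \eqref{LM3:eq:1}, so in the odd--odd case (which arises only for $\mathfrak{g}=D_{\lambda}^{\circ}$, since in $D_{\lambda}$ the unique odd root is isotropic) that bookkeeping must indeed be carried out explicitly, exactly as you anticipate.
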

\begin{proof}
    Relation \eqref{DS:7} can be rewritten as
    \begin{equation}\label{LM3:eq:3}
            [\mathcal{X}_{i,r+1}^{\pm},\,\mathcal{X}_{j,s}^{\pm}]+(-1)^{|i||j|}\left(1-q^{\pm b_{ij}}\right)\mathcal{X}_{j,s}^{\pm}\mathcal{X}_{i,r+1}^{\pm} =[\mathcal{X}_{i,r}^{\pm},\,\mathcal{X}_{j,s+1}^{\pm}]+\left(q^{\pm b_{ij}}-1\right)\mathcal{X}_{i,r}^{\pm}\mathcal{X}_{j,s+1}^{\pm}.
    \end{equation}
    Subtracting $[\mathcal{X}_{i,r}^{\pm},\,\mathcal{X}_{j,s}^{\pm}]$ from both sides of \eqref{LM3:eq:3} to get
     \begin{equation*}
            [\mathsf{X}_{i,r;1}^{\pm},\,\mathsf{X}_{j,s;0}^{\pm}]+(-1)^{|i||j|}\left(1-q^{\pm b_{ij}}\right)\mathsf{X}_{j,s;0}^{\pm}\mathsf{X}_{i,r+1;0}^{\pm} =[\mathsf{X}_{i,r;0}^{\pm},\,\mathsf{X}_{j,s;1}^{\pm}]+\left(q^{\pm b_{ij}}-1\right)\mathsf{X}_{i,r;0}^{\pm}\mathsf{X}_{j,s+1;0}^{\pm}.
    \end{equation*}
    Using induction on $m,n\geqslant 0$ and rearranging the terms, we deduce \eqref{LM3:eq:1}. Furthermore, \eqref{LM3:eq:2} follows from \eqref{LM3:eq:1} in terms of \eqref{LM2:eq:4}.

\end{proof}

\begin{lemma}\label{LM4}
    The following relations hold in $\mathcal{U}_{\hbar}(\mathcal{L}\mathfrak{g})$,
    \begin{align}\label{LM4:eq:1}
        &[\mathsf{X}_{i,r;m}^{\pm},\,\mathsf{X}_{j,s;n}^{\pm}]=0,\quad b_{ij}=0, \\ \label{LM4:eq:2}
        &[\mathsf{X}_{i,k;l}^{\pm},\,[\mathsf{X}_{i,r;m}^{\pm},\,\mathsf{X}_{j,s;n}^{\pm}]]+[\mathsf{X}_{i,r;m}^{\pm},\,[\mathsf{X}_{i,k;l}^{\pm},\,\mathsf{X}_{j,s;n}^{\pm}]]=0,\quad b_{ii}=0, 
     \end{align}
     Moreover, in $\mathbf{W}_{l+m+n}$, we have for $b_{ii}\neq 0$,
     \begin{gather}\label{LM4:eq:3}
         [\mathsf{X}_{i,k;l}^{\pm},\,[\mathsf{X}_{i,r;m}^{\pm},\,\mathsf{X}_{j,s;n}^{\pm}]]+[\mathsf{X}_{i,r;m}^{\pm},\,[\mathsf{X}_{i,k;l}^{\pm},\,\mathsf{X}_{j,s;n}^{\pm}]]\equiv 0
     \end{gather}
     modulo $\mathbf{W}_{l+m+n+1}$.
\end{lemma}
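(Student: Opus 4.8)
The plan is to prove the three assertions separately, since \eqref{LM4:eq:1} and \eqref{LM4:eq:2} are exact identities in $\mathcal{U}_{\hbar}(\mathcal{L}\mathfrak{g})$ inherited directly from the defining relations, while only \eqref{LM4:eq:3} genuinely requires the degeneration estimate. For \eqref{LM4:eq:1}, I would expand $\mathsf{X}_{i,r;m}^{\pm}=\sum_{k}(-1)^{m-k}\binom{m}{k}\mathcal{X}_{i,r+k}^{\pm}$ and likewise for $\mathsf{X}_{j,s;n}^{\pm}$; by bilinearity $[\mathsf{X}_{i,r;m}^{\pm},\mathsf{X}_{j,s;n}^{\pm}]$ becomes a linear combination of the brackets $[\mathcal{X}_{i,r+k}^{\pm},\mathcal{X}_{j,s+l}^{\pm}]$, each of which vanishes by \eqref{DS:8} because $b_{ij}=0$. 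This gives \eqref{LM4:eq:1} on the nose.

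For \eqref{LM4:eq:2}, the hypothesis $b_{ii}=0$ forces $|i|=\bar{1}$, and \eqref{DS:8} applied with $j=i$ shows that all the $\mathcal{X}_{i,\bullet}^{\pm}$ super-commute; hence the two odd elements $A:=\mathsf{X}_{i,k;l}^{\pm}$ and $B:=\mathsf{X}_{i,r;m}^{\pm}$ satisfy $[A,B]=0$. Setting $C:=\mathsf{X}_{j,s;n}^{\pm}$ and applying the super Leibniz rule $[A,[B,C]]=[[A,B],C]+(-1)^{|A||B|}[B,[A,C]]$ with $[A,B]=0$ and $(-1)^{|A||B|}=-1$ gives $[A,[B,C]]=-[B,[A,C]]$, which is exactly \eqref{LM4:eq:2}.

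The substantive case is \eqref{LM4:eq:3}, where I would degenerate the quantum Serre relation \eqref{DS:9}. First note that \eqref{DS:9} holds for all labels $k,r,s\in\mathbb{Z}$ and that the finite-difference operator $\sum_{a}(-1)^{\bullet-a}\binom{\bullet}{a}(\,\cdot\,)$ in any one label commutes with the $q$-brackets $[\![\,\cdot\,,\,\cdot\,]\!]$, whose structure constants $q^{-(\alpha,\beta)}$ are label-independent. Applying the three difference operators of orders $l,m,n$ in $k,r,s$ turns \eqref{DS:9} into
\begin{gather*}
[\![\mathsf{X}_{i,k;l}^{\pm},\,[\![\mathsf{X}_{i,r;m}^{\pm},\,\mathsf{X}_{j,s;n}^{\pm}]\!]]\!]+[\![\mathsf{X}_{i,r;m}^{\pm},\,[\![\mathsf{X}_{i,k;l}^{\pm},\,\mathsf{X}_{j,s;n}^{\pm}]\!]]\!]=0.
\end{gather*}
Next I would replace each $q$-bracket by the ordinary super-bracket using $[\![x,y]\!]=[x,y]+(-1)^{|x||y|}(1-q^{-(\alpha,\beta)})yx$ at the relevant weights, expanding inner brackets before outer ones. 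Every resulting correction carries a scalar factor $1-q^{-(\alpha,\beta)}\in\mathbf{W}_1$ by \eqref{LM2:eq:4}, while the three factors $\mathsf{X}_{i,k;l}^{\pm}\in\mathbf{W}_l$, $\mathsf{X}_{i,r;m}^{\pm}\in\mathbf{W}_m$, $\mathsf{X}_{j,s;n}^{\pm}\in\mathbf{W}_n$ contribute total degree $l+m+n$; since $\mathbf{W}_a\mathbf{W}_b\subseteq\mathbf{W}_{a+b}$, all corrections lie in $\mathbf{W}_{l+m+n+1}$. The surviving leading term is precisely the left-hand side of \eqref{LM4:eq:3}, which therefore vanishes modulo $\mathbf{W}_{l+m+n+1}$.

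The hardest part will be the filtration bookkeeping in \eqref{LM4:eq:3}: one must confirm $\mathsf{X}_{\alpha,r;p}^{\pm}\in\mathbf{W}_p$ (which follows since $\mathsf{X}_{\alpha,r;p}^{\pm}\in\pi^{-1}(W_p)\cap U\subseteq\mathcal{W}_p$), and verify that no correction term—especially those produced when the inner bracket is converted first—loses $\mathsf{X}$-degree and slips back into $\mathbf{W}_{l+m+n}$. Expanding the nested brackets in a fixed order and invoking Lemma \ref{LM0} to realign the shifted spectral labels should keep every correction manifestly in $\mathbf{W}_{l+m+n+1}$.
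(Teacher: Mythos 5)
Your proposal is correct and follows essentially the same route as the paper: the substantive case \eqref{LM4:eq:3} is obtained by degenerating the $q$-Serre relation \eqref{DS:9}, replacing $q$-brackets by ordinary super-brackets up to corrections carrying a scalar factor in $\mathbf{W}_1$ (the paper performs this rewriting before applying the difference operators, producing an explicit remainder with prefactor $\left(q^{b_{ij}/2}-q^{-b_{ij}/2}\right)^2\in\mathbf{W}_2$, but the filtration bookkeeping is the same). Your explicit arguments for \eqref{LM4:eq:1} and \eqref{LM4:eq:2} --- in particular deducing \eqref{LM4:eq:2} from \eqref{DS:8} with $j=i$ together with the super-Jacobi identity --- correctly supply the details the paper dismisses as ``checked more easily.''
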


\begin{proof}
    We only prove relation \eqref{LM4:eq:3}, and relations \eqref{LM4:eq:1} and \eqref{LM4:eq:2} can be checked more easily. By transposing and simplifying relation \eqref{DS:9} for $b_{ii}\neq 0$, we have
    \begin{align*}
        &[\mathcal{X}_{i,k}^{\pm},\,[\mathcal{X}_{i,r}^{\pm},\,\mathcal{X}_{j,s}^{\pm}]]+[\mathcal{X}_{i,r},\,[\mathcal{X}_{i,k}^{\pm},\,\mathcal{X}_{j,s}^{\pm}]] \nonumber
 \\ \label{LM4:eq:3}
 &=\left(q^{\frac{b_{ij}}{2}}-q^{-\frac{b_{ij}}{2}}\right)^2\Big([\mathcal{X}_{i,r}^{\pm},\,\mathcal{X}_{j,s}^{\pm}]\mathcal{X}_{i,k}^{\pm}+[\mathcal{X}_{i,k}^{\pm},\,\mathcal{X}_{j,s}^{\pm}]\mathcal{X}_{i,r}^{\pm}+\mathcal{X}_{j,s}^{\pm}\{\mathcal{X}_{i,k}^{\pm},\,\mathcal{X}_{i,r}^{\pm}\}\Big).
    \end{align*}
    By using induction respctive on $l,m,n\geqslant 0$ and taking modulo $\mathbf{W}_{l+m+n+1}$, we have \eqref{LM4:eq:3}. 
    
\end{proof}

\begin{lemma}\label{LM5}
    The following relation holds for $k,r,s\in\mathbb{Z}$ in $\mathcal{U}_{\hbar}(\mathcal{L}\mathfrak{g})$,
    \begin{equation}\label{LM5:eq:1}
        \begin{split}
            &\quad~[1+\lambda]_q[[\mathcal{X}_{1,k}^{\pm},\,\mathcal{X}_{2,r}^{\pm}],\,\mathcal{X}_{3,s}^{\pm}]+[[\mathcal{X}_{1,k}^{\pm},\,\mathcal{X}_{3,s}^{\pm}],\,\mathcal{X}_{2,r}^{\pm}] \\
 &=([\lambda]_q+1-[1+\lambda]_q)\mathcal{X}_{3,s}^{\pm}[\mathcal{X}_{1,k}^{\pm},\,\mathcal{X}_{2,r}^{\pm}]+([1+\lambda]_q-[\lambda]_q-1)\mathcal{X}_{2,r}^{\pm}[\mathcal{X}_{1,k}^{\pm},\,\mathcal{X}_{3,s}^{\pm}] \\
 &\quad+([1+\lambda]_q+1-[2+\lambda]_q)[\mathcal{X}_{2,r}^{\pm},\,\mathcal{X}_{3,s}^{\pm}]\mathcal{X}_{1,k}^{\pm} \\
 &\quad+([2+\lambda]_q+[\lambda]_q-2[1+\lambda]_q)\mathcal{X}_{2,r}^{\pm}\mathcal{X}_{3,s}^{\pm}\mathcal{X}_{1,k}^{\pm}-(2+[\lambda]_q-[2+\lambda]_q)\mathcal{X}_{3,s}^{\pm}\mathcal{X}_{2,r}^{\pm}\mathcal{X}_{1,k}^{\pm}.
        \end{split}
    \end{equation}
    Moreover, in $\mathbf{W}_{l+m+n}$, we have for $l,m,n\geqslant 0$, 
    \begin{gather}\label{LM5:eq:2}
        (1+\lambda)[[\mathsf{X}_{1,k;l}^{\pm},\,\mathsf{X}_{2,r;m}^{\pm}],\,\mathsf{X}_{3,s;n}^{\pm}]+[[\mathsf{X}_{1,k;l}^{\pm},\,\mathsf{X}_{3,s;n}^{\pm}],\,\mathsf{X}_{2,r;m}^{\pm}]\equiv 0
    ~~ \hbox{mod}~~ \mathbf{W}_{l+m+n+1}.
    \end{gather}
   \end{lemma}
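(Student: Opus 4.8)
The plan is to first establish the exact operator identity \eqref{LM5:eq:1} by expanding the quantum Serre relation \eqref{DS:10}, and then to extract the filtered congruence \eqref{LM5:eq:2} by applying the finite-difference operators and counting degrees in the $\mathbf{W}$-filtration.

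For the first part I would specialize the Cartan data of $D_\lambda^{\circ}$: every node is odd, so $(-1)^{|i||j|}=-1$ for $i,j\in\{1,2,3\}$, and $b_{12}=-1$, $b_{13}=1+\lambda$, $b_{23}=-\lambda$. Hence $[(\alpha_1,\alpha_2)]_q=[-1]_q=-1$ and $[(\alpha_1,\alpha_3)]_q=[1+\lambda]_q$, so \eqref{DS:10} becomes $[1+\lambda]_q[\![[\![A,B]\!],C]\!]+[\![[\![A,C]\!],B]\!]=0$, where $A=\mathcal{X}_{1,k}^{\pm}$, $B=\mathcal{X}_{2,r}^{\pm}$, $C=\mathcal{X}_{3,s}^{\pm}$. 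The next step is to unfold each bracket $[\![x,y]\!]=[x,y]_{q^{-(\alpha,\beta)}}$ into the genuine super-commutator plus a correction. For the inner brackets this gives $[\![A,B]\!]=[A,B]+(q-1)BA$ and $[\![A,C]\!]=[A,C]+(q^{-(1+\lambda)}-1)CA$; for the outer brackets I use that $[A,B]$ and $[A,C]$ are even of weights $\alpha_1+\alpha_2$ and $\alpha_1+\alpha_3$, with $(\alpha_1+\alpha_2,\alpha_3)=1$ and $(\alpha_1+\alpha_3,\alpha_2)=-1-\lambda$, so that $[\![[A,B],C]\!]=[[A,B],C]+(1-q^{-1})C[A,B]$ and $[\![[A,C],B]\!]=[[A,C],B]+(1-q^{1+\lambda})B[A,C]$. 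Substituting these into the rewritten Serre relation and reducing the residual products to the six ordered monomials in $A,B,C$ should reproduce exactly the right-hand side of \eqref{LM5:eq:1} after matching coefficients. This bookkeeping — tracking all $q$-powers and odd signs through two layers of brackets — is the main obstacle; it is elementary but error-prone, and I would control it by writing both sides as a single $\mathbb{C}(q)$-linear combination of the monomials $C[A,B]$, $B[A,C]$, $[B,C]A$, $BCA$, $CBA$ and comparing coefficients.

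Granting \eqref{LM5:eq:1}, the congruence \eqref{LM5:eq:2} follows cleanly. The crucial observation is that, by Lemma \ref{LM-number} (i.e.\ $[n]_q\equiv n$), every coefficient on the right-hand side of \eqref{LM5:eq:1} vanishes at $q=1$: each of $[\lambda]_q+1-[1+\lambda]_q$, $[1+\lambda]_q-[\lambda]_q-1$, $[1+\lambda]_q+1-[2+\lambda]_q$, $[2+\lambda]_q+[\lambda]_q-2[1+\lambda]_q$ and $2+[\lambda]_q-[2+\lambda]_q$ specializes to $0$, hence lies in $\mathbf{W}_1$. Since \eqref{LM5:eq:1} holds for all $k,r,s\in\mathbb{Z}$, I would apply the finite-difference operators $l$ times in $k$, $m$ times in $r$, and $n$ times in $s$; by multilinearity of the commutators and products these replace $A,B,C$ by $\mathsf{X}_{1,k;l}^{\pm}$, $\mathsf{X}_{2,r;m}^{\pm}$, $\mathsf{X}_{3,s;n}^{\pm}$ throughout while leaving the scalar coefficients untouched. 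Each differenced monomial is a product of three factors lying in $\mathbf{W}_l$, $\mathbf{W}_m$, $\mathbf{W}_n$, hence in $\mathbf{W}_{l+m+n}$; multiplying by a coefficient in $\mathbf{W}_1$ places every right-hand term in $\mathbf{W}_{l+m+n+1}$. On the left, replacing $[1+\lambda]_q$ by $1+\lambda$ costs an element of $\mathbf{W}_1\cdot\mathbf{W}_{l+m+n}\subseteq\mathbf{W}_{l+m+n+1}$, again by Lemma \ref{LM-number}. Reducing modulo $\mathbf{W}_{l+m+n+1}$ thus yields precisely \eqref{LM5:eq:2}, with the coefficient $1+\lambda$ surviving on the first term and $1$ on the second.
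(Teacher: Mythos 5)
Your proposal is correct and follows essentially the same route as the paper, whose proof consists only of the assertions that \eqref{LM5:eq:1} is a rewriting of \eqref{DS:10} and that \eqref{LM5:eq:2} then follows from Lemma \ref{LM-number}; you simply supply the bookkeeping (unfolding the $q$-brackets with the $D_{\lambda}^{\circ}$ Cartan data, observing that every right-hand coefficient vanishes at $q=1$, and applying the finite-difference/degree-count argument). The coefficient matching you flag as the main risk does check out: after expansion both sides reduce to $([\lambda]_q+1-[1+\lambda]_q)\big(\mathcal{X}_{3,s}^{\pm}\mathcal{X}_{1,k}^{\pm}\mathcal{X}_{2,r}^{\pm}-\mathcal{X}_{2,r}^{\pm}\mathcal{X}_{1,k}^{\pm}\mathcal{X}_{3,s}^{\pm}\big)$, so no gap remains.
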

\begin{proof}
    Relation \eqref{LM5:eq:1} is equivalent to \eqref{DS:10}, and relation \eqref{LM5:eq:2} follows from relation \eqref{LM5:eq:1} and Lemma \ref{LM-number}. 
    
\end{proof}

Now, we proceed to demonstrate Theorem \ref{main1}. 

\textbf{Proof of Theorem \ref{main1}. }By Lemma \ref{LM-number}--\ref{LM5}, it is clear that $\varphi$ is an epimorphism. We are left to prove the injectivity of $\varphi$. Denote $\mathfrak{B}_Y(\mathfrak{g})$ by the second set mentioned in Proposition \ref{Y:PBW} endowed with an ordering:
\begin{equation*}
    \begin{cases}
        x_{\alpha,\overline{{\bf m}}_1^0}^-\prec h_{i,m_2}\prec x_{\beta,\overline{{\bf m}}_3^0}^+,\quad  &\forall \alpha,\beta\in\mathcal{R}^+,\ m_1,m_2,m_3\in\mathbb{Z}_+, \\
    h_{i_1,m_1}\prec h_{i_2,m_2}, &\text{if}~~i_1<i_2~~\text{or}~~i_1=i_2,\ m_1<m_2, \\
    x_{\alpha,\overline{{\bf m}}_1^0}^{\pm}\prec x_{\beta,\overline{{\bf m}}_2^0}^{\pm}, &\text{if}~~\alpha\prec \beta~~\text{or}~~\alpha=\beta,\ m_1<m_2. 
    \end{cases}
\end{equation*}
Due to Corollary \ref{PBW:Uh}, $\varphi(\mathfrak{B}_Y(\mathfrak{g}))$ is also linearly independent. 

We can now determine a basis of $\operatorname{gr}_{\mathbf{W}}\mathcal{U}_{\hbar}(\mathcal{L}\mathfrak{g})$. It suffices to show a basis of $\mathbf{W}_p/\mathbf{W}_{p+1}$ for each $p\in\mathbb{Z}_+$. Suppose $\mathcal{M}$ is a monomial in $\bar{\mathsf{H}}_{i,0;m}$, $\bar{\mathsf{X}}_{i,0;m}^{\pm}$ with at most one power of odd elements such that $\hbar^{t}\mathcal{M}\in \mathbf{W}_p$. This implies $\mathcal{M}\in \mathbf{W}_{p-t}$. Hence, the set $\bar{\mathfrak{B}}_U^{[p]}(\mathfrak{g})$ of all monomials of the form $\overline{\hbar^{t}\mathcal{M}}$ such that $\mathcal{M}$ is a monomial of degree $p-t$ in $\bar{\mathsf{H}}_{i,0;m}$, $\bar{\mathsf{X}}_{i,0;m}^{\pm}$ with at most one power of odd elements is a basis of $\mathbf{W}_p/\mathbf{W}_{p+1}$. 

Let $\widetilde{\mathfrak{B}}^{[p]}_Y(\mathfrak{g})$ be the set consisting of all monomials $\hbar^t\mathcal{N}$ such that $\mathcal{N}\in \mathfrak{B}_Y(\mathfrak{g})$ has degree $p-t$. It follows that the mapping $\varphi(\widetilde{\mathfrak{B}}^{[p]}_Y(\mathfrak{g}))=\bar{\mathfrak{B}}_U^{[p]}(\mathfrak{g})$ for each $p$. Thus, $\varphi$ is an isomorphism over $\mathbb{C}[\hbar]$. 

\qed

This theorem leads to a series of significant corollaries that precisely characterize structure of the super Yangian $Y(\mathfrak{g})$. 
The following provides a PBW basis for the super Yangian $Y_{\hbar}(\mathfrak{g})$. 
\begin{corollary}\label{Y-PBW-complete}
    The set of all ordered monomials in $x_{\alpha,\overline{\bf m}^{0}}^{\pm}$ and $h_{i,m}$ {\rm (}with the powers of odd elements not greater than 1{\rm )} for $\alpha\in\mathcal{R}^+$, $i\in I$, $m\in\mathbb{Z}_+$ forms a basis for $Y_{\hbar}(\mathfrak{g})$ over $\mathbb{C}[\hbar]$.
\end{corollary}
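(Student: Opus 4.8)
The plan is to obtain Corollary \ref{Y-PBW-complete} as a formal consequence of Theorem \ref{main1} and Proposition \ref{Y:PBW}, with essentially no new computation. Proposition \ref{Y:PBW} already supplies the spanning half of the statement: the ordered monomials in the $x_{\alpha,\overline{\bf m}^{0}}^{\pm}$ and $h_{i,m}$ span $Y_{\hbar}(\mathfrak{g})$ over $\mathbb{C}[\hbar]$. Consequently the only thing left to verify is $\mathbb{C}[\hbar]$-linear independence of this family, after which ``spanning $+$ independent $=$ basis'' finishes the argument.

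For independence I would transport the question across the isomorphism $\varphi$ of Theorem \ref{main1}. Being a $\mathbb{C}[\hbar]$-algebra isomorphism, $\varphi$ is in particular an injective $\mathbb{C}[\hbar]$-module map, so a family in $Y_{\hbar}(\mathfrak{g})$ is linearly independent whenever its image under $\varphi$ is. Thus it suffices to check that the images
\begin{gather*}
    \varphi\!\left(x_{\alpha,\overline{\bf m}^{0}}^{\pm}\right)=\bar{\mathsf{X}}_{\alpha,0;m}^{\pm},\qquad \varphi\!\left(h_{i,m}\right)=\bar{\mathsf{H}}_{i,0;m}
\end{gather*}
generate a linearly independent set of ordered monomials in $\operatorname{gr}_{\mathbf{W}}\mathcal{U}_{\hbar}(\mathcal{L}\mathfrak{g})$, where I have used the definition of $\varphi$ on generators together with the fact (Lemma \ref{LM0}) that $\bar{\mathsf{X}}_{\alpha,r;m}^{\pm}\equiv\bar{\mathsf{X}}_{\alpha,0;m}^{\pm}$ and $\bar{\mathsf{H}}_{i,r;m}\equiv\bar{\mathsf{H}}_{i,0;m}$ in each graded piece, so that the monomials $x_{\alpha,\overline{\bf m}^{0}}^{\pm}$ map precisely onto the distinguished representatives.

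This last point is exactly what was already established inside the proof of Theorem \ref{main1}: the set $\bar{\mathfrak{B}}_U^{[p]}(\mathfrak{g})$, consisting of the ordered monomials $\overline{\hbar^{t}\mathcal{M}}$ in the $\bar{\mathsf{H}}_{i,0;m}$ and $\bar{\mathsf{X}}_{\alpha,0;m}^{\pm}$ with at most one power of each odd element, was shown to be a basis of $\mathbf{W}_p/\mathbf{W}_{p+1}$ for every $p$, by comparison with the PBW basis of the quantum loop superalgebra from Corollary \ref{PBW:Uh}. Summing over $p$ yields a basis of $\operatorname{gr}_{\mathbf{W}}\mathcal{U}_{\hbar}(\mathcal{L}\mathfrak{g})$, and the identity $\varphi(\widetilde{\mathfrak{B}}_Y^{[p]}(\mathfrak{g}))=\bar{\mathfrak{B}}_U^{[p]}(\mathfrak{g})$ recorded there shows that the ordered monomials $\mathfrak{B}_Y(\mathfrak{g})$ map bijectively onto this basis. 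Pulling back along $\varphi^{-1}$ gives the required independence, completing the proof that $\mathfrak{B}_Y(\mathfrak{g})$ is a $\mathbb{C}[\hbar]$-basis of $Y_{\hbar}(\mathfrak{g})$.

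I do not anticipate a genuine obstacle, since the delicate content---controlling the degeneration filtration $\mathbf{W}_{\bullet}$ and matching its graded quotients with PBW monomials---was already handled in Theorem \ref{main1}. The only matters needing care are purely organizational: one must make sure the ordering imposed on $\{x_{\alpha,\overline{\bf m}^{0}}^{\pm},h_{i,m}\}$ in the corollary is the same ordering used to define $\mathfrak{B}_Y(\mathfrak{g})$ in that proof, and that the parity restriction (odd generators occurring to power at most one) is imposed identically on both sides, so that the bijection of monomials under $\varphi$ is exact and no spurious relations are introduced.
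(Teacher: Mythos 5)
Your proposal is correct and follows exactly the route the paper intends: the corollary is stated as an immediate consequence of Theorem \ref{main1}, with spanning supplied by Proposition \ref{Y:PBW} and linear independence obtained by transporting the monomials through the isomorphism $\varphi$ onto the basis $\bar{\mathfrak{B}}_U^{[p]}(\mathfrak{g})$ of $\operatorname{gr}_{\mathbf{W}}\mathcal{U}_{\hbar}(\mathcal{L}\mathfrak{g})$ constructed in that theorem's proof via Corollary \ref{PBW:Uh}. Your closing caveats (matching the ordering and the parity restriction on both sides) are exactly the right points to watch, and they are handled identically in the paper.
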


Based on this basis, we immediately derive an analog of the classical limit for $Y_{\hbar}(\mathfrak{g})$. 
\begin{corollary}
The graded algebra $\operatorname{gr}Y_{\hbar}(\mathfrak{g})$ is isomorphic to $U(\mathfrak{g}[u])[\hbar]$.
\end{corollary}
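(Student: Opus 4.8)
The plan is to promote the surjection $U(\mathfrak{g}[u])[\hbar]\twoheadrightarrow \operatorname{gr}Y_{\hbar}(\mathfrak{g})$---already produced just before Proposition \ref{Y:PBW} from the fact that, modulo one lower filtration degree, the current relations of $Y_{\hbar}(\mathfrak{g})$ reproduce the bracket relations of $U(\mathfrak{g}[u])[\hbar]$---to an isomorphism, by verifying that it carries a $\mathbb{C}[\hbar]$-basis to a $\mathbb{C}[\hbar]$-basis. I would equip $U(\mathfrak{g}[u])[\hbar]$ with its standard grading $\deg(z\otimes u^m)=m$, $\deg\hbar=0$; then this epimorphism, say $\phi$, is a graded $\mathbb{C}[\hbar]$-algebra homomorphism sending each generator $z\otimes u^m$ to the degree-$m$ symbol (up to a nonzero scalar) of the corresponding current generator of $Y_{\hbar}(\mathfrak{g})$, so that only injectivity remains to be shown.

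First I would fix the source basis: by the PBW theorem for $U(\mathfrak{g}[u])$, extended by $\mathbb{C}[\hbar]$, the ordered monomials in the elements $e_{\alpha}\otimes u^m$, $f_{\alpha}\otimes u^m$ $(\alpha\in\mathcal{R}^+)$ and $h_i\otimes u^m$ $(i\in I)$, with each odd generator occurring to power at most $1$, form a $\mathbb{C}[\hbar]$-basis of $U(\mathfrak{g}[u])[\hbar]$. Under $\phi$ such a monomial maps to the product of the symbols of the matching $x_{\alpha,\overline{\bf m}^0}^{\pm}$ and $h_{i,m}$, which is precisely the symbol in $\operatorname{gr}Y_{\hbar}(\mathfrak{g})$ of the corresponding ordered monomial of $Y_{\hbar}(\mathfrak{g})$. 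The essential input here is the case-by-case congruence $x_{\alpha,\overline{\bf m}}^{\pm}\equiv x_{\alpha,\overline{\bf m}^0}^{\pm}$ modulo $Y_{\hbar}^{[m-1]}(\mathfrak{g})$ established earlier in this section, which guarantees that the iterated bracket defining the root vector $e_{\alpha}\otimes u^m$ in $\mathfrak{g}[u]$ is exactly matched by the top symbol of $x_{\alpha,\overline{\bf m}^0}^{\pm}$.

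Next I would invoke Corollary \ref{Y-PBW-complete}, which (after Theorem \ref{main1}) upgrades the spanning statement of Proposition \ref{Y:PBW} to the assertion that the ordered monomials in $x_{\alpha,\overline{\bf m}^0}^{\pm}$ and $h_{i,m}$ (odd powers at most $1$) form a genuine $\mathbb{C}[\hbar]$-basis of $Y_{\hbar}(\mathfrak{g})$. Since each such monomial is homogeneous for the filtration, with filtration degree equal to the grading degree of its $\phi$-preimage, their symbols constitute a $\mathbb{C}[\hbar]$-basis of $\operatorname{gr}Y_{\hbar}(\mathfrak{g})$. Hence $\phi$ sends the PBW basis of $U(\mathfrak{g}[u])[\hbar]$ bijectively onto a basis of $\operatorname{gr}Y_{\hbar}(\mathfrak{g})$, so $\phi$ is injective, and therefore an isomorphism.

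I do not anticipate a serious obstacle, since the analytic heart of the argument---the degeneration of Theorem \ref{main1} and the resulting PBW basis of Corollary \ref{Y-PBW-complete}---is already available. The one point demanding care is the bookkeeping in the previous paragraph: one must check that the chosen ordering on $\{x_{\alpha,\overline{\bf m}^0}^{\pm},h_{i,m}\}$ is compatible with a PBW ordering on the corresponding root vectors of $\mathfrak{g}[u]$, so that $\phi$ truly identifies the two indexing sets monomial by monomial. This is routine given the root-vector dictionary of Section \ref{se:simpleLiesuper:DG} together with the congruences above.
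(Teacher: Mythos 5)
Your proposal is correct and matches the paper's (implicit) argument: the paper derives this corollary immediately from Corollary \ref{Y-PBW-complete} by exactly the route you describe, namely upgrading the epimorphism $U(\mathfrak{g}[u])[\hbar]\twoheadrightarrow \operatorname{gr}Y_{\hbar}(\mathfrak{g})$ constructed before Proposition \ref{Y:PBW} to an isomorphism by matching the PBW basis of $U(\mathfrak{g}[u])[\hbar]$ with the symbols of the ordered monomials in $x_{\alpha,\overline{\bf m}^{0}}^{\pm}$ and $h_{i,m}$. Your added care about the compatibility of the ordering and the congruences $x_{\alpha,\overline{\bf m}}^{\pm}\equiv x_{\alpha,\overline{\bf m}^{0}}^{\pm}$ modulo $Y_{\hbar}^{[m-1]}(\mathfrak{g})$ is exactly the bookkeeping the paper leaves implicit.
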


Combining with \cite[Proposition 3.8]{Na20}, we can deduce
\begin{corollary}\label{Y:center}
    The center of the super Yangian $Y_{\hbar}(\mathfrak{g})$ is trivial. 
\end{corollary}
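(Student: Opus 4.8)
The plan is to exploit the filtration on $Y_{\hbar}(\mathfrak{g})$ together with the identification $\operatorname{gr}Y_{\hbar}(\mathfrak{g})\cong U(\mathfrak{g}[u])[\hbar]$ furnished by the preceding corollary, and to transfer the problem to the associated graded algebra by passing to principal symbols. First I would invoke \cite[Proposition 3.8]{Na20}, which asserts that the center of $U(\mathfrak{g}[u])$ is trivial; this is the input that is genuinely unavailable at the level of $U(\mathfrak{g})$ (whose center contains Casimir-type elements) and only becomes true once the current variable $u$ is adjoined. It follows that the center of $U(\mathfrak{g}[u])[\hbar]$ equals $\mathbb{C}[\hbar]$. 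Transporting the grading through the isomorphism, this central copy of $\mathbb{C}[\hbar]$ sits entirely in filtration degree $0$, since $\hbar$ and the scalars carry degree $0$ while the generators $x_{i,m}^{\pm}$ and $h_{i,m}$ carry degree $m$.

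Next I would take a nonzero central element $z\in Y_{\hbar}(\mathfrak{g})$, let $p$ be the least integer with $z\in Y_{\hbar}^{[p]}(\mathfrak{g})$, and write $\operatorname{gr}(z)\in\operatorname{gr}^{[p]}Y_{\hbar}(\mathfrak{g})$ for its (nonzero) principal symbol. The key step is to show that $\operatorname{gr}(z)$ is central in $\operatorname{gr}Y_{\hbar}(\mathfrak{g})$. For any homogeneous $y\in Y_{\hbar}(\mathfrak{g})$ of degree $q$ we have $[z,\,y]=0$; on the other hand the supercommutator lies in $Y_{\hbar}^{[p+q]}(\mathfrak{g})$, and its image in $\operatorname{gr}^{[p+q]}Y_{\hbar}(\mathfrak{g})$ is by construction exactly the bracket $[\operatorname{gr}(z),\,\operatorname{gr}(y)]$ computed in $\operatorname{gr}Y_{\hbar}(\mathfrak{g})$. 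Hence $[\operatorname{gr}(z),\,\operatorname{gr}(y)]=0$. Since the symbols $\operatorname{gr}(y)$ span $\operatorname{gr}Y_{\hbar}(\mathfrak{g})$, we conclude that $\operatorname{gr}(z)$ belongs to its (super)center, which by the first paragraph is $\mathbb{C}[\hbar]$.

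Combining the two observations forces $\operatorname{gr}(z)$ into filtration degree $0$, so $p=0$ and $z\in Y_{\hbar}^{[0]}(\mathfrak{g})$, with $\operatorname{gr}(z)\in\mathbb{C}[\hbar]$. Because $\operatorname{gr}^{[0]}Y_{\hbar}(\mathfrak{g})=Y_{\hbar}^{[0]}(\mathfrak{g})$, the symbol map is the identity in degree $0$, so $z=\operatorname{gr}(z)\in\mathbb{C}[\hbar]$. This shows that the center of $Y_{\hbar}(\mathfrak{g})$ reduces to the scalars $\mathbb{C}[\hbar]$, i.e. it is trivial.

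I expect the main obstacle to be the passage from centrality of $z$ to centrality of $\operatorname{gr}(z)$ in the super-filtered setting: one must verify that the induced Lie bracket on $\operatorname{gr}Y_{\hbar}(\mathfrak{g})$ is indeed the current-superalgebra bracket identified in the corollary, so that the top-degree component of $[z,\,y]$ really is $[\operatorname{gr}(z),\,\operatorname{gr}(y)]$ and no lower-order contribution interferes; this relies on the degree-preserving commutation relations recorded just before Proposition \ref{Y:PBW}. One should also note that the sign conventions make the argument insensitive to whether ``center'' is read as the ordinary or the graded center, since the target $\mathbb{C}[\hbar]$ is even and central in either sense. The dependence on \cite[Proposition 3.8]{Na20} is essential and is precisely where the distinction between $\mathfrak{g}$ and its current superalgebra $\mathfrak{g}[u]$ is used.
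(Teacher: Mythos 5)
Your proposal is correct and follows essentially the same route as the paper, which derives the corollary by combining the isomorphism $\operatorname{gr}Y_{\hbar}(\mathfrak{g})\cong U(\mathfrak{g}[u])[\hbar]$ with the triviality of the center of $U(\mathfrak{g}[u])$ from \cite[Proposition 3.8]{Na20}; your write-up simply makes explicit the standard principal-symbol argument that the paper leaves implicit.
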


\subsection{Isomorphisms of super Yangians}
Let $\mathfrak{g}'$ be the Lie superalgebra of type $\mathfrak{osp}_{4|2}$. As described by Musson in \cite[Section 4.2]{Mu12}, when the value of $\lambda$ is $1$, $-2$ or$-\frac{1}{2}$, the Lie superalgebra $\mathfrak{g}=D(2,1;\lambda)$ is isomorphic to $\mathfrak{g}'$. The next proposition establishes a parallel to these isomorphisms in super Yangians. 
\begin{proposition}\label{to:osp}
    There exists an isomorphism of super Yangians $Y_{\hbar}(\mathfrak{g}')$ and $Y_{\hbar}(\mathfrak{g})$ such that
    \begin{enumerate}
        \item[{\rm ($i$)}] $\mathfrak{g}=D_{\lambda}$. For $\lambda=1$, 
    \begin{align*}
            &h_{3,m}\mapsto h_{3,m},\quad x_{3,m}^{\pm}\mapsto x_{3,m}^{\pm}, \\ &h_{j,m}\mapsto h_{j',m},\quad x_{j,m}^{\pm}\mapsto x_{j',m}^{\pm},\quad \{j,j'\}=\{1,2\};
        \end{align*}
        for $\lambda=-2$,
        \begin{align*}
            &h_{3,m}\mapsto \frac{1}{2}h_{3,m},\quad x_{3,m}^{\pm}\mapsto x_{3,m}^{\pm}, \\ &h_{j,m}\mapsto \frac{1}{2}h_{j',m},\quad x_{j,m}^{\pm}\mapsto x_{j',m}^{\pm},\quad \{j,j'\}=\{1,2\};
        \end{align*}
        for $\lambda=-1/2$,
        \begin{align*}
            &h_{3,m}\mapsto 2h_{3,m},\quad x_{3,m}^{\pm}\mapsto x_{3,m}^{\pm}, \\ &h_{j,m}\mapsto 2h_{j',m},\quad x_{j,m}^{\pm}\mapsto x_{j',m}^{\pm},\quad \{j,j'\}=\{1,2\}.
        \end{align*}
        
        \item[{\rm ($ii$)}] $\mathfrak{g}=D_{\lambda}^{\circ}$. For $\lambda=1$,
        \begin{align*}
            &h_{1,m}\mapsto-h_{1,m},\quad x_{1,m}^{\pm}\mapsto x_{1,m}^{\pm}, \\ &h_{j,m}\mapsto-h_{j',m},\quad x_{j,m}^{\pm}\mapsto x_{j',m}^{\pm},\quad \{j,j'\}=\{2,3\};
        \end{align*}
        for $\lambda=-2$, 
        \begin{gather*}
            h_{i,m}\mapsto-h_{i,m},\quad x_{i,m}^{\pm}\mapsto x_{i,m}^{\pm};~~~~~~~~~~~~~~~~~~~~~~~~
        \end{gather*}
        for $\lambda=-1/2$,
        \begin{align*}
            &h_{2,m}\mapsto 2h_{2,m},\quad x_{2,m}^{\pm}\mapsto x_{2,m}^{\pm}, \\ &h_{j,m}\mapsto 2h_{j',m},\quad x_{j,m}^{\pm}\mapsto x_{j',m}^{\pm},\quad \{j,j'\}=\{1,3\}.
        \end{align*}

    \end{enumerate}
\end{proposition}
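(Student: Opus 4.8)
The plan is to treat each of the six assignments in the statement as a $\mathbb{C}[\hbar]$-linear map $\psi$ on the Drinfeld generators of the source super Yangian and to prove that $\psi$ is a bijective homomorphism of superalgebras. Since both $Y_{\hbar}(\mathfrak{g}')$ and $Y_{\hbar}(\mathfrak{g})$ are presented purely by generators and the relations \eqref{SY:1}--\eqref{SY:8} (Definition \ref{Def-Y}), it suffices to show that the images of the source generators satisfy the source relations in the target algebra, so that $\psi$ extends to a well-defined homomorphism, and then that $\psi$ is invertible. First I would record, for each special value $\lambda\in\{1,-2,-1/2\}$ and each Dynkin realization $D_\lambda$ or $D_\lambda^{\circ}$, the combinatorial data underlying $\psi$: the permutation $\sigma$ of the index set $I$ together with the scalars by which the Cartan currents $h_{i,m}$ are rescaled. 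The crucial observation is that the relations \eqref{SY:1}--\eqref{SY:8} depend on $\mathfrak{g}$ only through the node parities $|i|$ and the bilinear form values $(\alpha_i,\alpha_j)$, so the entire verification reduces to numerical compatibilities of this data.

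Next I would verify these compatibilities. The permutation $\sigma$ must preserve parity, $|\sigma(i)|=|i|$, so that $\psi$ respects the $\mathbb{Z}_2$-grading. Under $\psi$, relation \eqref{SY:2} becomes the requirement that the source form $(\alpha_i,\alpha_j)'$ equal the rescaling scalar $c_i$ times the target form $(\alpha_{\sigma(i)},\alpha_{\sigma(j)})$, while relation \eqref{SY:4} ties the chosen rescaling of the Cartan currents to the normalization of the current brackets; together these pin down the admissible pairs $(\sigma, c_i)$, and I would confirm that the stated data satisfy them against the presentation of $\mathfrak{osp}_{4|2}$ in each case. Granting this, relations \eqref{SY:1} and \eqref{SY:6} hold immediately, and \eqref{SY:3}, \eqref{SY:5}, which carry the factor $\tfrac{\hbar}{2}(\alpha_i,\alpha_j)$ together with an anticommutator, follow from the same bilinear-form matching once the rescaling scalars are tracked through the quadratic terms.

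The main obstacle is the higher Serre-type relations. For part (i) this is the quartic relation \eqref{SY:7} attached to the distinguished diagram $D_\lambda$, and for part (ii) it is the three-term relation \eqref{SY:8} attached to $D_\lambda^{\circ}$, whose two sides are weighted by $(\alpha_1,\alpha_3)$ and $(\alpha_1,\alpha_2)$. Since source and target are of the same diagram type within each part, the shape of the relation is preserved, and what must be confirmed is that the coefficient ratio $(\alpha_1,\alpha_3):(\alpha_1,\alpha_2)$ is invariant under the substitution for each of the three values of $\lambda$. I expect this to be the delicate point, because this ratio is exactly where the parameter $\lambda$ enters the defining relations, and it is precisely here that the accidental isomorphisms $D(2,1;\lambda)\cong\mathfrak{osp}_{4|2}$ at $\lambda\in\{1,-2,-1/2\}$ make themselves felt. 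To organize the bookkeeping I would pass to the classical limit $\operatorname{gr}Y_{\hbar}(\mathfrak{g})\cong U(\mathfrak{g}[u])[\hbar]$: the degree-zero part of $\psi$ must reproduce Musson's Lie superalgebra isomorphism $\mathfrak{g}'\cong\mathfrak{g}$ from \cite[Section 4.2]{Mu12}, so the coefficient matching is guaranteed at the classical level, after which the graded relations lift the identities to all degrees.

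Finally, for bijectivity I would exhibit the inverse explicitly, as the map built from $\sigma^{-1}$ and the reciprocal rescaling scalars, and check by the symmetric computation that it too is a homomorphism; alternatively, one invokes the PBW basis of Corollary \ref{Y-PBW-complete}, noting that $\psi$ carries the ordered PBW monomials of $Y_{\hbar}(\mathfrak{g}')$ onto a set of ordered PBW monomials that is a basis of $Y_{\hbar}(\mathfrak{g})$, whence $\psi$ is an isomorphism of free $\mathbb{C}[\hbar]$-modules compatible with the product.
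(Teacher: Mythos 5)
Your proposal is correct and is essentially the paper's argument spelled out in detail: the paper's entire proof is the one-line remark that one compares the defining relations of $Y_{\hbar}(\mathfrak{osp}_{4|2})$ from \cite[Section 6.2]{FT23} with Definition \ref{Def-Y}, which is precisely the relation-by-relation matching of parities, bilinear-form values, and rescaling scalars that you lay out. The only small imprecision is your claim that the source $Y_{\hbar}(\mathfrak{g}')$ is also presented by \eqref{SY:1}--\eqref{SY:8}; it is presented by the (same-shaped) relations of \cite{FT23}, and the index permutations in the statement are there exactly to reconcile that labelling with the one used here.
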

\begin{proof}
    By comparing the defining relations of the Drinfeld super Yangian $Y_{\hbar}(\mathfrak{g}')$ in \cite[Section 6.2]{FT23} with Definition \ref{Def-Y}, we prove this proposition.
    
\end{proof}

\begin{remark}
    Proposition \ref{to:osp} allows us to replace $\mathfrak{g}$ in Corollary \ref{Y-PBW-complete}--\ref
    {Y:center} by $\mathfrak{g}'$.
\end{remark}

\vspace{1em}
Previous studies \cite{Ka77,Mu12,Se11} have demonstrated that, in contrast to classical Lie algebra, odd reflections can alter  Dynkin diagrams. For any fixed $\lambda$, the reflection $s_{\alpha_2}$ associated with an odd root $\alpha_2$ transforms the Dynkin diagram $G(D_{\lambda})$ into the Dynkin diagram $G(D_{\lambda}^{\circ})$. Furthermore, the corresponding quantum reflection establishes an isomorphism between the quantum affine(loop) superalgebras $U_q(\widehat{D}_{\lambda})$ and $U_q(\widehat{D}_{\lambda}^{\circ})$ \cite{HSTY08}. Under the framework of Section \ref{se:superYangian:fromto}, we propose the following conjecture concerning odd reflections in the context of $Y_{\hbar}(\mathfrak{g})$. 
\begin{conjecture}
    The super Yangians $Y_{\hbar}(\widehat{D}_{\lambda})$ and $Y_{\hbar}(\widehat{D}_{\lambda}^{\circ})$ are  isomorphic. 
\end{conjecture}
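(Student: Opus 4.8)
The plan is to transport the quantum odd reflection of \cite{HSTY08} to the Yangian level through the degeneration of Theorem \ref{main1}. A direct attack --- guessing the images of the generators $x_{i,m}^{\pm},h_{i,m}$ and verifying the defining relations \eqref{SY:1}--\eqref{SY:8} --- is obstructed by the fact that the $\hbar$-corrections to the odd reflection in the Yangian realization are not known a priori, which is exactly why the statement is only a conjecture. The degeneration route avoids this: since $\varphi$ identifies $Y_{\hbar}(D_{\lambda})\cong\operatorname{gr}_{\mathbf{W}}\mathcal{U}_{\hbar}(\mathcal{L}D_{\lambda})$ and $Y_{\hbar}(D_{\lambda}^{\circ})\cong\operatorname{gr}_{\mathbf{W}}\mathcal{U}_{\hbar}(\mathcal{L}D_{\lambda}^{\circ})$, it suffices to produce a $\mathbb{C}[[\hbar]]$-algebra isomorphism $\Theta\colon\mathcal{U}_{\hbar}(\mathcal{L}D_{\lambda})\to\mathcal{U}_{\hbar}(\mathcal{L}D_{\lambda}^{\circ})$ that preserves the filtration $\mathbf{W}_{\bullet}$, and then pass to associated graded algebras and conjugate by $\varphi$.

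First I would construct $\Theta$. The odd reflection $s_{\alpha_2}$ yields, by \cite{HSTY08}, an isomorphism of the Drinfeld--Jimbo presentations $U_q(\widehat{D}_{\lambda})\cong U_q(\widehat{D}_{\lambda}^{\circ})$; composing this with the Drinfeld isomorphisms $\Pi$ of Proposition \ref{Isom-QASA} on both sides and quotienting by $\mathcal{C}^{\pm\frac12}=1$ realizes $\Theta$ entirely in Drinfeld currents. I would then record explicit formulas for $\Theta(\mathcal{X}_{i,r}^{\pm})$ and $\Theta(\mathcal{H}_{i,r})$. Because $s_{\alpha_2}$ sends $\alpha_1\mapsto\alpha_1+\alpha_2$, $\alpha_3\mapsto\alpha_2+\alpha_3$ and $\alpha_2\mapsto-\alpha_2$, these images are iterated $q$-bracket combinations of the old currents, generally carrying spectral shifts $r\mapsto r\pm1$ inherited from the affine node.

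The decisive step is to verify $\Theta(\mathbf{W}_m^{D_{\lambda}})=\mathbf{W}_m^{D_{\lambda}^{\circ}}$ for all $m$. The conceptual reason this should hold is that both quantum loop superalgebras degenerate, via $\pi$ and the commutative diagram preceding Proposition \ref{special:Drinfeld}, to the same enveloping algebra $U(\mathcal{L}\mathfrak{g})$, and that the defining ideal $W_m$ is \emph{intrinsic} to $\mathcal{L}\mathfrak{g}$: it is spanned by $z\otimes t^r(t-1)^m$ over all $r\in\mathbb{Z}$, hence is a $\mathbb{C}[t,t^{-1}]$-submodule independent of the chosen simple system. Consequently the classical limit $\bar{\Theta}$ of $\Theta$ identifies both degenerations with the same $U(\mathcal{L}\mathfrak{g})$ and acts by an automorphism that fixes the finite part $\mathfrak{g}$ up to a change of Chevalley generators and rotates the loop parameter by a spectral shift; both operations preserve the $\mathbb{C}[t,t^{-1}]$-module $W_m$, so $\bar{\Theta}(W_m)=W_m$. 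Promoting this to the $\hbar$-adic statement $\Theta(\mathcal{W}_m)\subseteq\mathbf{W}_m$ requires tracking $\hbar$-orders of the structure constants, using Lemma \ref{LM-number} together with the degenerations $[\lambda]_q\equiv\lambda$ and $[1+\lambda]_q\equiv1+\lambda$ modulo $\mathbf{W}_1$. Since $\Theta$ is an algebra map and each $\mathbf{W}_m$ is a two-sided ideal, this would give $\Theta(\mathbf{W}_m^{D_{\lambda}})\subseteq\mathbf{W}_m^{D_{\lambda}^{\circ}}$, and the same argument applied to $\Theta^{-1}$ would force equality. Then $\Theta$ descends to an isomorphism $\operatorname{gr}_{\mathbf{W}}\mathcal{U}_{\hbar}(\mathcal{L}D_{\lambda})\cong\operatorname{gr}_{\mathbf{W}}\mathcal{U}_{\hbar}(\mathcal{L}D_{\lambda}^{\circ})$, and reading off the leading symbols of the images would produce explicit generator formulas for the induced isomorphism $Y_{\hbar}(D_{\lambda})\cong Y_{\hbar}(D_{\lambda}^{\circ})$.

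The main obstacle is precisely the filtration bookkeeping in this decisive step. The quantum odd reflection builds the new root currents as iterated $q$-brackets with spectral shifts and coefficients rational in $q$ (involving $[\lambda]_q$ and $[1+\lambda]_q$), and one must rule out any cancellation of leading terms that would drop the $\mathbf{W}$-degree --- that is, show that the $\hbar^0$-symbol of $\Theta(\mathsf{X}_{\alpha,0;m}^{\pm})$ lands in $\mathbf{W}_m^{D_{\lambda}^{\circ}}\setminus\mathbf{W}_{m+1}^{D_{\lambda}^{\circ}}$ and not deeper. Establishing this simultaneously for $\Theta$ and $\Theta^{-1}$, so that the induced graded map is an honest isomorphism rather than a one-sided morphism, is the crux; it is exactly the absence of tight control over these current-level formulas in the degeneration that leads us to record the statement as a conjecture rather than a theorem.
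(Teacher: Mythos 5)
First, note that the paper itself offers no proof of this statement: it is recorded as a conjecture, motivated by (but not derived from) the isomorphism $U_q(\widehat{D}_{\lambda})\cong U_q(\widehat{D}_{\lambda}^{\circ})$ of \cite{HSTY08}. Your proposal follows exactly the route the paper gestures at --- transport the quantum odd reflection through the degeneration of Theorem \ref{main1} --- and you are candid that the decisive step is unresolved. So the honest verdict is that this is a strategy outline, not a proof, and it does not close the gap that the paper leaves open.

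The gap you name is real, and it is worth being precise about why it is not merely ``bookkeeping.'' Two distinct issues arise. (1) \emph{$\hbar$-integrality of $\Theta$.} The odd reflection of \cite{HSTY08} is constructed via the Weyl groupoid on the Drinfeld--Jimbo side over $\mathbb{C}(q)$; to even define $\Theta$ on $\mathcal{U}_{\hbar}(\mathcal{L}\mathfrak{g})$ as a $\mathbb{C}[[\hbar]]$-algebra map you must check that no coefficient of the form $(q-q^{-1})^{-1}$ survives in the images of the currents, since such a factor contributes $\hbar^{-1}$ and destroys both the $\mathbb{C}[[\hbar]]$-structure and any filtration statement. The coefficients $[\lambda]_q$, $[1+\lambda]_q$ are harmless (they degenerate to $\lambda$, $1+\lambda$ by Lemma \ref{LM-number}), but the normalizations in the quantum reflection are not obviously of this benign form. (2) \emph{Non-degeneration of leading symbols.} Your appeal to the intrinsic nature of $W_m\subset\mathcal{L}\mathfrak{g}$ correctly handles the classical shadow, and Lemma \ref{LM0} absorbs the spectral shifts $r\mapsto r\pm1$; what it does not give is the statement that $\Theta(\mathsf{X}_{\alpha,0;m}^{\pm})$ has nonzero image in $\mathbf{W}_m/\mathbf{W}_{m+1}$. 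Since $\mathcal{W}_m$ is generated by $\pi^{-1}(W_m)\cap U$ for a \emph{specific} spanning set $U$ of currents tied to the chosen simple system, an iterated $q$-bracket whose classical limit is a prescribed root vector could still acquire an extra overall factor of $\hbar$ through cancellation among its terms, in which case the induced graded map would fail to be surjective. Ruling this out requires the explicit current-level formulas for the reflected generators together with a computation of their images under $\pi$, for both $\Theta$ and $\Theta^{-1}$; this is precisely the content that neither your proposal nor the paper supplies, and it is why the statement remains a conjecture.
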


\medskip
\section{Hopf superalgebra structure of $Y_{\hbar}(\mathfrak{g})$}\label{se:hopfstructure}
The goal of this section is to establish the Hopf superalgebra structure of the super Yangian $Y_{\hbar}(\mathfrak{g})$. To achieve this, we first construct its minimal realization, which provides a finite set of generators for defining coproduct and antipode. Such work was done by \cite{GNW18} for affine Kac-Moody algebras, \cite{MS22} for $A(m,n)$ and \cite{Ma23} for $B(m,n)$. 

Remark \ref{hbar} allows us to specialize to $\hbar=1$ without loss of generality. By convention, we write $Y_1(\mathfrak{g})$ simply  as $Y(\mathfrak{g})$. 

\subsection{Minimalistic presentation}
\begin{proposition}\label{minimalistic}
    The super Yangian $Y(\mathfrak{g})$ is isomorphic to the superalgebra $\mathring{Y}(\mathfrak{g})$ generated by the finite set
    \begin{gather*}
       \{ h_{i,0},\ h_{i,1},\ x_{i,0}^{\pm},\ x_{i,1}^{\pm}\quad \text{with}\quad i\in I\}
    \end{gather*}
    with the defining relations
    \begin{align} \label{MP:1}
        &[h_{i,m},\,h_{j,n}]=0,\quad m,n=0,1, \\ \label{MP:2}
        &[h_{i,0},\,x_{j,n}^{\pm}]=\pm(\alpha_i,\alpha_j)x_{j,n}^{\pm},\quad n=0,1, \\ \label{MP:3}
        &[\widetilde{h}_{i,1},\,x_{j,0}^{\pm}]=\pm(\alpha_i,\alpha_j)x_{j,1}^{\pm},\quad \widetilde{h}_{i,1}:=h_{i,1}-\frac{1}{2}h_{i,0}^2, \\ \label{MP:4}
        &[x_{i,m}^+,\,x_{j,n}^-]=\delta_{ij}h_{i,m+n},\quad m+n=0,1, \\ \label{MP:5}
        &[x_{i,1}^{\pm},\,x_{j,0}^{\pm}]=[x_{i,0}^{\pm},\,x_{j,1}^{\pm}]\pm\frac{(\alpha_i,\alpha_j)}{2}\{x_{i,0}^{\pm},\,x_{j,0}^{\pm}\}, \\ \label{MP:6}
        &[x_{i,0}^{\pm},\,x_{j,0}^{\pm}]=0,\quad (\alpha_i,\alpha_j)=0, \\ \label{MP:7}
        &[x_{i,0}^{\pm},\,[x_{i,0}^{\pm},\,x_{j,0}^{\pm}]]=0,\quad \mathfrak{g}=D_{\lambda},\ (\alpha_i,\alpha_j)\neq 0,\ \text{and}~~i\neq j, \\ \label{MP:8}
        &(\alpha_1,\alpha_3)[[x_{1,0}^{\pm},\,x_{2,0}^{\pm}],\,x_{3,0}^{\pm}]=(\alpha_1,\alpha_2)[[x_{1,0}^{\pm},\,x_{3,0}^{\pm}],\,x_{2,0}^{\pm}],\quad \mathfrak{g}=D_{\lambda}^{\circ}. 
    \end{align}
    Here, the generators of $\mathring{Y}(\mathfrak{g})$ have $\mathbb{Z}_2$-grading $|x_{i,0}^{\pm}|=|x_{i,1}^{\pm}|=|i|$ and $|h_{i,0}|=|h_{i,1}|=0$. Moreover, define in 
    $\mathring{Y}(\mathfrak{g})$ for $n\geqslant 0$,
    \begin{equation}\label{generating}
    \begin{split}
        &\mathfrak{g}=D_{\lambda}\begin{cases}
            x_{j,n+1}^{\pm}=\pm\frac{1}{b_{ij}}[\widetilde{h}_{i,1},\,x_{j,n}^{\pm}],\quad (i,j)\in\{(1,1),(1,2),(2,3)\}, \\
            h_{i,n+1}=[x_{i,0}^+,\,x_{i,n
            +1}^-].
        \end{cases} \\
        &\mathfrak{g}=D_{\lambda}^{\circ}\begin{cases}
            x_{j,n+1}^{\pm}=\pm\frac{1}{b_{ij}}[\widetilde{h}_{i,1},\,x_{j,n}^{\pm}],\quad (i,j)\in\{(1,2),(2,3),(3,1)\}, \\
            h_{i,n+1}=[x_{i,0}^+,\,x_{i,n+1}^-].
        \end{cases}
    \end{split}
    \end{equation}
    Then the isomorphism assigns to the generators of $Y(\mathfrak{g})$ is given by
    \begin{gather*}
        h_{i,m}\mapsto h_{i,m},\quad x_{i,m}^{\pm}\mapsto x_{i,m}^{\pm}. 
    \end{gather*}
\end{proposition}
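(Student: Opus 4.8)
The plan is to produce a pair of mutually inverse homomorphisms of superalgebras between $\mathring{Y}(\mathfrak{g})$ and $Y(\mathfrak{g})$, each acting as the identity on the common degree-$\leqslant 1$ generators $h_{i,0},h_{i,1},x_{i,0}^{\pm},x_{i,1}^{\pm}$; the map $\mathring{Y}(\mathfrak{g})\to Y(\mathfrak{g})$ will be routine, while its inverse carries essentially all of the work.

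First I would build $\Psi\colon\mathring{Y}(\mathfrak{g})\to Y(\mathfrak{g})$ by sending each generator to the element of the same name. To see that $\Psi$ is well defined I must check that relations \eqref{MP:1}--\eqref{MP:8} hold among these images in $Y(\mathfrak{g})$. All but two are immediate specializations of \eqref{SY:1}--\eqref{SY:8}; the exceptions are \eqref{MP:1} for $i\neq j$, which follows from \eqref{SY:2} and \eqref{SY:4} after writing $h_{j,n}=[x_{j,0}^{+},x_{j,n}^{-}]$ and applying the Jacobi identity, and \eqref{MP:3}, which follows from \eqref{SY:2}--\eqref{SY:3} together with $[h_{i,0}^{2},x_{j,0}^{\pm}]=\pm(\alpha_i,\alpha_j)\{h_{i,0},x_{j,0}^{\pm}\}$ --- the identity that the shift $\widetilde{h}_{i,1}=h_{i,1}-\tfrac12 h_{i,0}^{2}$ is tailored to absorb. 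Surjectivity of $\Psi$ is then clear, since the recursion \eqref{generating} holds in $Y(\mathfrak{g})$ by \eqref{SY:3}--\eqref{SY:4}, so every generator $x_{i,m}^{\pm},h_{i,m}$ of $Y(\mathfrak{g})$ is an iterated bracket of the degree-$\leqslant 1$ generators.

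The core of the argument is the construction of the inverse $\Phi\colon Y(\mathfrak{g})\to\mathring{Y}(\mathfrak{g})$, defined on generators by $h_{i,m}\mapsto h_{i,m}$ and $x_{i,m}^{\pm}\mapsto x_{i,m}^{\pm}$, where for $m\geqslant 2$ the targets are the recursively defined elements of \eqref{generating}. To prove $\Phi$ well defined I would verify that the images satisfy every relation \eqref{SY:1}--\eqref{SY:8} of Definition \ref{Def-Y}, through a sequence of inductive lemmas in the following order: (i) the raising identity $[\widetilde{h}_{i,1},x_{j,m}^{\pm}]=\pm(\alpha_i,\alpha_j)x_{j,m+1}^{\pm}$ for \emph{all} $i,j\in I$ and $m\geqslant 0$ (a definition for the designated pairs in \eqref{generating}, and for the remaining pairs a consequence of the Jacobi identity and induction on $m$ from the degree-$\leqslant 1$ relations), together with the extension of \eqref{SY:2} to all $m$; (ii) the full relation \eqref{SY:3}, recovered by re-expanding $\widetilde{h}_{i,1}$ and a double induction on $m,n$; (iii) the commutativity \eqref{SY:1} for all $i,j,m,n$; (iv) relation \eqref{SY:4} for all $m,n$, i.e.\ the off-diagonal vanishing and the fact that $[x_{i,m}^{+},x_{i,n}^{-}]$ depends only on $m+n$; (v) the current relations \eqref{SY:5}, \eqref{SY:6}, obtained by raising degrees through (i); and (vi) the Serre-type relations \eqref{SY:7}, \eqref{SY:8}, reduced to their degree-$0$ base cases \eqref{MP:7}, \eqref{MP:8}. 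Once $\Phi$ is shown to be a homomorphism it is inverse to $\Psi$ on generators, so both are isomorphisms; equivalently, injectivity of $\Psi$ can be read off by matching the PBW spanning set of $\mathring{Y}(\mathfrak{g})$ produced by (i)--(vi) against the PBW basis of Corollary \ref{Y-PBW-complete}.

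I expect the main obstacle to lie in steps (i) and (vi). In (i) the recursion raises the degree of $x_{j,\bullet}^{\pm}$ only through a single designated node $i$, so establishing the same identity through the other nodes requires commuting $\widetilde{h}_{i',1}$ through the defining bracket chains while simultaneously tracking the super signs and the $\lambda$-dependent bracket coefficients; this is most delicate for $\mathfrak{g}=D_{\lambda}^{\circ}$, where the degenerate Serre relation \eqref{MP:8} entangles all three nodes and there is no ordinary Serre relation to fall back on. In (vi) one must show that the higher-degree Serre relations are forced by their degree-$0$ base cases and that no new independent relations appear; the symmetrized triple bracket in \eqref{SY:7} and the $\lambda$-weighted coefficients in \eqref{SY:8} make this the most computation-heavy induction, and it is here that the exceptional, $\lambda$-dependent structure requires genuine adaptation of the non-exceptional arguments of \cite{GNW18,MS22,Ma23}.
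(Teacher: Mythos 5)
Your overall architecture coincides with the paper's: the map $\mathring{Y}(\mathfrak{g})\to Y(\mathfrak{g})$ is the easy direction, and the substance is to deduce all of \eqref{SY:1}--\eqref{SY:8} from \eqref{MP:1}--\eqref{MP:8} after defining the higher modes by \eqref{generating}. However, the two places you yourself flag as the main obstacles are precisely where the paper deploys specific devices that your plan does not supply, so as written there is a genuine gap rather than merely deferred computation. For $\mathfrak{g}=D_{\lambda}^{\circ}$, the propagation of the degenerate Serre relation \eqref{MP:8} to all degrees (your step (vi)) cannot be done by a straightforward "reduce to the degree-$0$ base case" induction: applying a single raising operator $[\widetilde{h}_{i,1},\,\cdot\,]$ to the degree-$(l,m,n)$ Serre expression $\mathfrak{A}^{\pm}_{l,m,n}$ produces the linear combination $\pm\bigl(b_{i1}\mathfrak{A}^{\pm}_{l+1,m,n}+b_{i2}\mathfrak{A}^{\pm}_{l,m+1,n}+b_{i3}\mathfrak{A}^{\pm}_{l,m,n+1}\bigr)$, not a single shifted term. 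The paper's key observation is that applying all three operators $i=1,2,3$ yields a $3\times 3$ linear system whose coefficient matrix is $\pm A(D_{\lambda}^{\circ})$; since $\det A(D_{\lambda}^{\circ})=2\lambda(1+\lambda)\neq 0$, all three shifted expressions vanish simultaneously and the induction closes. Without this (or an equivalent mechanism) step (vi) does not go through. Similarly, in your step (iii) the commutativity of the higher Cartan modes is not a formal consequence of the raising identities; the paper imports the Levendorskii-type auxiliary elements of \cite{Le93,MS22,Ma23}, which act as exact raising operators and differ from $h_{i,k}$ only by polynomials in lower modes, to reduce $[h_{j,m-r},\,h_{i,r}]=0$ to \eqref{MP:4}.

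A further difference, of efficiency rather than correctness: for $\mathfrak{g}=D_{\lambda}$ the paper does not run your induction at all. Since $(\alpha_1,\alpha_3)=0$, every nontrivial relation lives in the rank-two subsystems on nodes $\{1,2\}$ and $\{2,3\}$, and the paper exhibits two embeddings of $Y(\mathfrak{sl}_{2|1})$ (the second after rescaling by $\lambda$), so that this whole case follows from the known minimalistic presentation of $Y(\mathfrak{sl}_{2|1})$ in \cite{MS22}. Your direct approach is viable there in principle, but it redoes work the paper deliberately avoids, and it is exactly the part of your plan (step (i), raising degrees through non-designated nodes while tracking the $\lambda$-dependent coefficients) that you leave unexecuted.
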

\begin{remark}
    The superalgebra $\mathring{Y}(\mathfrak{g})$ is referred to as the \textit{minimalistic presentation}. Here, we employ the same notation for the generators of $\mathring{Y}(\mathfrak{g})$ by convention, as this will not lead to confusion in our paper.
\end{remark}

\begin{proof}
    It suffices to prove that relations \eqref{SY:1}--\eqref{SY:8} can be deduced from relations \eqref{MP:1}--\eqref{MP:8}. The proof proceeds by cases:
    
    (1) $\mathfrak{g}=D_{\lambda}$. By the definition of $\widetilde{h}_{i,1}$, we have for $i,j\in I$, 
    \begin{gather}\label{MP:proof:1}
        [\widetilde{h}_{i,1},\,\widetilde{h}_{j,1}]=0.
    \end{gather}
    Thus, for $j=2$ or $3$, $n\geqslant 1$, 
    \begin{gather}\label{MP:proof:2}
        [\widetilde{h}_{3,1},\,x_{j,n}^{\pm}]=\pm \frac{1}{b_{j-1,j}}[\widetilde{h}_{3,1},\,[\widetilde{h}_{j-1,1},\,x_{j,n-1}^{\pm}]] 
        =\pm \frac{1}{b_{j-1,j}}[\widetilde{h}_{j-1,1},[\widetilde{h}_{3,1},\,x_{j,n-1}^{\pm}]],
    \end{gather}
    which implies that
    \begin{gather*}
        [\widetilde{h}_{3,1},\,x_{j,n}^{\pm}]=\pm b_{3j}x_{j,n+1}^{\pm},
    \end{gather*}
    by relation \eqref{MP:3} and induction on $n$. By Corollary \ref{Y-PBW-complete}, the assignments
    \begin{align*}
        &\varepsilon_1:\ h_{i,m}\mapsto h_{i,m},\quad\quad\quad ~~ x_{i,m}^{\pm}\mapsto x_{i,m}^{\pm}, \\
        &\varepsilon_2:\ h_{i,m}\mapsto \lambda^{-1}h_{4-i,m},\quad x_{i,m}^{\pm}\mapsto \sqrt{\lambda}^{-1}x_{4-i,m}^{\pm},
    \end{align*}
    for $i\in\{1,2\}$ are both embeddings from the super Yangians $Y(\mathfrak{sl}_{2|1})$ to $Y(D_{\lambda})$. Then the relations \eqref{SY:1}--\eqref{SY:8} involving the elements $h_{i,m},x_{i,m}^{\pm}$ in $\mathring{Y}(\mathfrak{g})$ restricted to the index pairs $(1,2)$, $(2,1)$, $(3,2)$ and $(2,3)$ hold by the minimalistic presentation of $Y(\mathfrak{sl}_{2|1})$ proved in \cite[Section 3.3]{MS22}. In addition,  it is easy to see that
    \begin{gather*}
        [h_{1,m},\,h_{3,m}]=[h_{1,m},\,x_{3,n}^{\pm}]=[h_{3,m},\,x_{1,n}^{\pm}]=[x_{1,m}^{\pm},\,x_{3,n}^{\pm}]=0
    \end{gather*}
    for all $m,n\in\mathbb{Z}_+$. 

    (2) $\mathfrak{g}=D_{\lambda}^{\circ}$. In this case, \eqref{MP:proof:1} continues to hold, and with relation \eqref{MP:1}, we see that
    \begin{gather*}
        [h_{i,0},\,x_{j,n}^{\pm}]=\pm b_{ij} x_{j,n}^{\pm},\quad [\widetilde{h}_{i,1},\,x_{j,n}^{\pm}]=\pm b_{ij} x_{j,n+1}^{\pm},\quad \forall i,j\in I,\ n\in\mathbb{Z}_+,
    \end{gather*}
    follows analogously to \eqref{MP:proof:2}. 
    Simple calculations give for $m\in\mathbb{Z}_+$, 
    \begin{align}\label{MP:proof:3}
        &[h_{i,0},\,h_{i,m}]=[[h_{i,0},\,x_{i,0}^+],\,x_{i,m}^-]+[x_{i,0}^+,\,[h_{i,0},\,x_{i,m}^-]]=0, \\ \label{MP:proof:4}
        &[h_{i,1},\,h_{i,m}]=[\widetilde{h}_{i,1},\,h_{i,m}]=[[\widetilde{h}_{i,1},\,x_{i,0}^+],\,x_{i,m}^-]+[x_{i,0}^+,\,[\widetilde{h}_{i,1},\,x_{i,m}^-]]=0, 
    \end{align}
    owing to $b_{ii}=0$. 

    Now we claim that a class of relations 
    \begin{align}\label{MP:inductive:1}
        &[h_{j,m-r},\,h_{i,r}]=0,\quad 0\leqslant r\leqslant m,  \\ \label{MP:inductive:2}
        &[x_{i,m-r}^+,\,x_{i,r}^-]=h_{i,m},\quad 0\leqslant r\leqslant m, \\ \label{MP:inductive:3}
        &[h_{j,m},\,x_{i,n}^{\pm}]=[h_{j,m-1},\,x_{i,n+1}^{\pm}]\pm\frac{b_{ji}}{2}\{h_{j,m-1},\,x_{i,n}^{\pm}\},\quad n\geqslant 0, \\ \label{MP:inductive:4}
        &[x_{j,m}^{\pm},\,x_{i,n}^{\pm}]=[x_{j,m-1}^{\pm},\,x_{i,n+1}^{\pm}]\pm\frac{b_{ji}}{2}\{x_{j,m-1}^{\pm},\,x_{i,n}^{\pm}\},\quad n\geqslant 0, ,\quad n\geqslant 0,
    \end{align}
    for $i\neq j$, $m\geqslant1$. We prove it by induction on $m$. For $m=1$ is clear. Assume that the claim holds for some integer $k\geqslant 1$. A fact is that
    \begin{gather*}
        [h_{j,0},\,h_{i,k+1}]=b_{ji}\left([x_{i,0}^+,\,x_{i,k+1}^-]-[x_{i,0}^+,\,x_{i,k+1}^-]\right)=0. 
    \end{gather*}
    From the inductive hypothesis, we have
    \begin{align*}
        [h_{j,1},\,h_{i,k}]=[\widetilde{h}_{j,1},\,[x_{i,0}^+,\,x_{i,k}^-]]=b_{ji}\left([x_{i,1}^+,\,x_{i,k}^-]-[x_{i,0}^+,\,x_{i,k+1}^-]\right),
    \end{align*}
    and
    \begin{gather*}
        0=\frac{1}{b_{ji}}[h_{j,1},\,h_{i,k}]=\frac{1}{b_{ji}}[\widetilde{h}_{j,1},\,[x_{i,k-r}^+,\,x_{i,r}^-]]=[x_{i,k+1-r}^+,\,x_{i,r}^-]-[x_{i,k-r}^+,\,x_{i,r+1}^-],
    \end{gather*}
    proving relation \eqref{MP:inductive:2} for $m=k+1$. 
    For \eqref{MP:inductive:3} and \eqref{MP:inductive:4} with $m=k+1$, we have for ``$+$'',
    \begin{align*}
        [x_{j,k+1}^+,\,x_{i,n}^+]&=\frac{1}{b_{ij}}[[\widetilde{h}_{i,1},\,x_{j,k}^+],\,x_{i,n}^+]=\frac{1}{b_{ij}}[\widetilde{h}_{i,1},\,[x_{j,k}^+,\,x_{i,n}^+]] \\
        &=\frac{1}{b_{ij}}[\widetilde{h}_{i,1},\,[x_{j,k-1}^+,\,x_{i,n+1}^+]]
        +\frac{1}{2}[\widetilde{h}_{i,1},\,\{x_{j,k-1}^+,\,x_{i,n}^+\}] \\
        &=\frac{1}{b_{ij}}[[\widetilde{h}_{i,1},\,x_{j,k-1}^+],\,x_{i,n+1}^+]
        +\frac{1}{2}([\widetilde{h}_{i,1},\,x_{j,k-1}^+]x_{i,n}^+ + x_{i,n}^+[\widetilde{h}_{i,1},\,x_{j,k-1}^+]) \\
        &=[x_{j,k}^+,\,x_{i,n+1}^+]+\frac{b_{ij}}{2}\{x_{j,k}^+,\,x_{i,n}^+\}, \\
        [h_{j,k+1},\,x_{i,n}^+]&=[[x_{j,k}^+,\,x_{j,1}^-],\,x_{i,n}^+]=[x_{j,k}^+,\,[x_{j,1}^-,\,x_{i,n}^+]]+[[x_{j,k}^+,\,x_{i,n}^+],\,x_{j,1}^-] \\
        &=[[x_{j,k-1}^+,\,x_{i,n+1}^+],\,x_{j,1}^-]+\frac{b_{ji}}{2}[\{x_{j,k-1}^+,\,x_{i,n}^+\},\,x_{j,1}^-] \\
        &=[h_{j,k},\,x_{i,n+1}^+]+\frac{b_{ji}}{2}\{h_{j,k},\,x_{i,n}^+\},
    \end{align*}
    where the equalities
    \begin{gather}\label{MP:proof:5}
        [x_{i,n+1}^+,\,x_{j,0}^-]=[x_{i,n}^+,\,x_{j,1}^-]=\cdots=[x_{i,0}^+,\,x_{j,n+1}^-]=0,
    \end{gather}
    by the action of $\tilde{h}_{j,1}$, $\tilde{h}_{a,1}$($\{i,j,a\}=I$) and induction on $n$. 
    similar to ``$-$''. 
    As for relation \eqref{MP:inductive:1}, the analysis parallel to \cite[Corollary 1.5 \& Lemma 2.1]{Le93} (see also \cite[Lemma 3.16]{MS22} or \cite[Lemma 5.2]{Ma23}) shows that their is a series of elements $\accentset{\approx}{h}_{ij,k}\in\mathring{Y}(\mathfrak{g})$ such that
    \begin{gather*}
        \accentset{\approx}{h}_{ij,0}=h_{i,0},\quad [\accentset{\approx}{h}_{ij,k},\,x_{j,n}^{\pm}]=\pm b_{ij}x_{j,k+n}^{\pm},
    \end{gather*}
    and $\accentset{\approx}{h}_{ij,k}-h_{i,k}$ is a polynomial in $h_{i,0},h_{i,1},\ldots,h_{i,k-1}$. It follows that for $1\leqslant r\leqslant k$
    \begin{gather*}
        [h_{j,k+1-r},\,h_{i,r}]=-[\accentset{\approx}{h}_{ij,r},\,[x_{j,0}^+,\,x_{j,k+1-r}^-]]=-b_{ij}\left([x_{j,r}^+,\,x_{j,k+1-r}^-]-[x_{j,0}^+,\,x_{j,k+1}^-]\right)=0.
    \end{gather*}

    Relation \eqref{SY:1}, \eqref{SY:3}--\eqref{SY:6} for $b_{ij}=0$ can be checked by the same way as \eqref{MP:proof:4} and \eqref{MP:proof:5}. Therefore,  
    it remains to verify relation \eqref{SY:8}. Denote
    \begin{gather*}
        \mathfrak{A}_{l,m,n}^{\pm}:=(1+\lambda)[[x_{1,l}^{\pm},\,x_{2,m}^{\pm}],\,x_{3,n}^{\pm}]+[[x_{1,l}^{\pm},\,x_{3,n}^{\pm}],\,x_{2,m}^{\pm}].
    \end{gather*}
    Relation \eqref{MP:8} is equivalent to $\mathfrak{A}_{0,0,0}^{\pm}=0$. 
    Applying $[\tilde{h}_{i,1},\,\cdot\,]$ for $i=1,2,3$ to  $\mathfrak{A}_{l,m,n}^{\pm}=0$ respectively, we obtain a system of linear equations
    \begin{equation*}
        \begin{cases}
            \pm b_{12}\mathfrak{A}_{l,m+1,n}^{\pm}\pm b_{13}\mathfrak{A}_{l,m,n+1}^{\pm}=0, \\
            \pm b_{21}\mathfrak{A}_{l+1,m,n}^{\pm}\pm b_{23}\mathfrak{A}_{l,m,n+1}^{\pm}=0, \\
            \pm b_{31}\mathfrak{A}_{l+1,m,n}^{\pm}\pm b_{32}\mathfrak{A}_{l,m+1,n}^{\pm}=0, \\
        \end{cases}
    \end{equation*}
    whose coefficient matrix coincides with $\pm A(D_{\lambda}^{\circ})$. Since the determinant of $A(D_{\lambda}^{\circ})$ is nonzero, this immediately implies that
    \begin{gather*}
        \mathfrak{A}_{l+1,m,n}^{\pm}=\mathfrak{A}_{l,m+1,n}^{\pm}
        =\mathfrak{A}_{l,m,n+1}^{\pm}=0.
    \end{gather*}
    Then we get \eqref{SY:8} by induction on $l,m,n$, respectively.

\end{proof}

\subsection{Coproduct, counit and antipode}\label{se:hopfstructure:CCA}
Let $\square$ be the operator defined on the super Yangian $Y(\mathfrak{g})$ by $$\square(x)=x\otimes 1+1\otimes x\quad \text{for}\quad x\in Y(\mathfrak{g}).$$ Although $\square$ is not a superalgebraic homomorphism, it still satisfies the compatibility condition $$\square([x,\,y])=[\square(x),\,\square(y)]\quad \text{for}\quad x,y\in Y(\mathfrak{g}).$$
It prompts us to define an explicit superalgebraic homomorphism $\triangle$ from $Y(\mathfrak{g})$ to the tensor product $Y(\mathfrak{g})\otimes Y(\mathfrak{g})$.

Observe that the assignment 
\begin{gather*}
    h_i\mapsto d_i^{-1}h_{i,0},\quad e_i\mapsto x_{i,0}^+,\quad f_i\mapsto d_i^{-1}x_{i,0}^-
\end{gather*}
naturally defines a superalgebraic embedding
$\iota:\ U(\mathfrak{g})\rightarrow Y(\mathfrak{g}).$
Let $\widehat{\Omega}^+$ denote the image of the half Casimir element $\Omega^+$ under $\iota$. We further define
\begin{gather*}
    \widetilde{x}_{\alpha}^+=\iota(\tilde{e}_{\alpha})\quad \text{and}\quad \widetilde{x}_{\alpha}^-=\iota(\tilde{f}_{\alpha})\quad \text{for}~~\alpha\in\mathcal{R}^+. 
\end{gather*}
The pullback of the bilinear form from $\mathfrak{g}$ yields a non-degenerate invariant $\mathbb{C}$-value bilinear form $(\,\cdot\,,\,\cdot\,)$ on $\iota(\mathfrak{g})$ uniquely characterized by $(\iota(x),\iota(y))=(x,y)$ for all $x,y\in\mathfrak{g}$. 
These constructions immediately imply that$(\widetilde{x}_{\alpha}^+,\widetilde{x}_{\beta}^-)=\delta_{\alpha\beta}$ for $\alpha,\beta\in\mathcal{R}^+$. Combining Lemma \ref{Half-casimir} with the preceding discussion, we arrive at the following lemma.
\begin{lemma}\label{half-casimir-Y}
    The following equations hold in $Y(\mathfrak{g})$,
    \begin{align}\label{half-casimir-Y1}
        &[\square(h_{i,0}),\,\widehat{\Omega}^+]=0, \\ \label{half-casimir-Y2}
        &[\square(x_{i,0}^+),\,\widehat{\Omega}^+]=-x_{i,0}^+\otimes h_{i,0}, \\ \label{half-casimir-Y3}
        &[\square(x_{i,0}^-),\,\widehat{\Omega}^+]=h_{i,0}\otimes x_{i,0}^-.
    \end{align}
\end{lemma}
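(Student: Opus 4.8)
The plan is to transport Lemma \ref{Half-casimir} through the embedding $\iota$ and its tensor square. The first observation is that, because $\iota$ is a homomorphism of superalgebras preserving the $\mathbb{Z}_2$-grading, the map $\iota\otimes\iota:\,U(\mathfrak{g})\otimes U(\mathfrak{g})\to Y(\mathfrak{g})\otimes Y(\mathfrak{g})$ is itself a homomorphism of $\mathbb{Z}_2$-graded tensor products. One checks this directly against the sign rule $(x_1\otimes y_1)(x_2\otimes y_2)=(-1)^{|y_1||x_2|}x_1x_2\otimes y_1y_2$, using that $\iota$ is multiplicative and satisfies $|\iota(z)|=|z|$. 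Consequently $\iota\otimes\iota$ intertwines the supercommutators on the two sides, and by construction $\widehat{\Omega}^+=(\iota\otimes\iota)(\Omega^+)$.

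Next, for any homogeneous $x\in\mathfrak{g}$ one has $\square(\iota(x))=\iota(x)\otimes 1+1\otimes\iota(x)=(\iota\otimes\iota)(x\otimes 1+1\otimes x)$, so that
\begin{gather*}
    [\square(\iota(x)),\,\widehat{\Omega}^+]=(\iota\otimes\iota)\big([x\otimes 1+1\otimes x,\,\Omega^+]\big).
\end{gather*}
The right-hand side has already been computed in Lemma \ref{Half-casimir}, so it remains only to apply $\iota\otimes\iota$ to the three outputs and keep track of the normalization constants $d_i$ arising from the normalized embedding.

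Specializing to $x=h_i,e_i,f_i$ then yields the three claims. For $x=h_i$, equation \eqref{half-casimir-1} gives $0$; since $\iota(h_i)=d_i^{-1}h_{i,0}$ and $\square$ is linear, we obtain $d_i^{-1}[\square(h_{i,0}),\,\widehat{\Omega}^+]=0$, which is \eqref{half-casimir-Y1}. For $x=e_i$ we have $\iota(e_i)=x_{i,0}^+$, and \eqref{half-casimir-2} maps to $(\iota\otimes\iota)(-e_i\otimes d_ih_i)=-x_{i,0}^+\otimes d_i(d_i^{-1}h_{i,0})=-x_{i,0}^+\otimes h_{i,0}$, giving \eqref{half-casimir-Y2}. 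For $x=f_i$ we have $\iota(f_i)=d_i^{-1}x_{i,0}^-$, and \eqref{half-casimir-3} becomes $d_i^{-1}[\square(x_{i,0}^-),\,\widehat{\Omega}^+]=(\iota\otimes\iota)(d_ih_i\otimes f_i)=d_i^{-1}(h_{i,0}\otimes x_{i,0}^-)$, whence \eqref{half-casimir-Y3} after clearing the factor $d_i^{-1}$.

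The computation is routine once the homomorphism property is in place; the only points requiring care are the verification that $\iota\otimes\iota$ respects the graded sign convention (so that supercommutators genuinely pass through it) and the bookkeeping of the scaling factors $d_i$ on the right-hand sides. I expect no substantive obstacle beyond this, since the essential structural input was already supplied by Lemma \ref{Half-casimir}.
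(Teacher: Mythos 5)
Your proposal is correct and is precisely the argument the paper intends: the paper gives no explicit proof, merely asserting that the lemma follows from Lemma \ref{Half-casimir} together with the embedding $\iota$, and your transport via $\iota\otimes\iota$ with the $d_i$ bookkeeping fills in exactly that computation. The scaling factors all check out ($d_i\,\iota(h_i)=h_{i,0}$ makes the right-hand sides of \eqref{half-casimir-Y2} and \eqref{half-casimir-Y3} come out clean), so there is nothing to add.
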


Lemma \ref{half-casimir-Y} enables us to establish that
\begin{theorem}\label{coproduct}
    Let $\triangle:\ Y(\mathfrak{g})\rightarrow Y(\mathfrak{g})\otimes Y(\mathfrak{g})$ be the linear $\mathbb{Z}_2$-graded even operator satisfying
\begin{align*}
    \triangle(h_{i,0})&=\square(h_{i,0}),\quad \triangle(x_{i,0}^{\pm})=\square(x_{i,0}^{\pm}), \\
    \triangle(h_{i,1})&=\square(h_{i,1})+h_{i,0}\otimes h_{i,0}+[h_{i,0}\otimes 1,\,\widehat{\Omega}^+] \\
    &=\square(h_{i,1})+h_{i,0}\otimes h_{i,0}-\sum_{\alpha\in\mathcal{R}^+}(\alpha_i,\alpha)\widetilde{x}_{\alpha}^-\otimes \widetilde{x}_{\alpha}^+.
\end{align*} 
Then the operator $\triangle$ defines a superalgebraic homomorphism. Furthermore, $\triangle$ forms a coproduct on the super Yangian $Y(\mathfrak{g})$.
\end{theorem}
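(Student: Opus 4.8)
The plan is to work entirely through the minimalistic presentation of Proposition~\ref{minimalistic}. Since $Y(\mathfrak{g})$ is generated by the finite set $\{h_{i,0},h_{i,1},x_{i,0}^{\pm},x_{i,1}^{\pm}\}$ subject only to relations \eqref{MP:1}--\eqref{MP:8}, it suffices to fix the value of $\triangle$ on each of these generators and then check that the prescribed images satisfy every defining relation; this is exactly what it takes for $\triangle$ to extend to a well-defined superalgebra homomorphism. The images of $h_{i,0}$ and $x_{i,0}^{\pm}$ are the primitive elements $\square(h_{i,0})$, $\square(x_{i,0}^{\pm})$, and $\triangle(h_{i,1})$ is prescribed in the statement. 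The values on $x_{i,1}^{\pm}$ are not free: they are forced through \eqref{generating} and \eqref{MP:3} by setting $\triangle(x_{j,1}^{\pm}):=\pm b_{ij}^{-1}[\triangle(\widetilde h_{i,1}),\,\triangle(x_{j,0}^{\pm})]$, where $i$ is the distinguished node attached to $j$ in \eqref{generating}.

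First I would compute these images explicitly. Because $\widetilde h_{i,1}=h_{i,1}-\frac{1}{2}h_{i,0}^{2}$, the $h_{i,0}\otimes h_{i,0}$ contributions cancel and $\triangle(\widetilde h_{i,1})=\square(\widetilde h_{i,1})+[h_{i,0}\otimes 1,\,\widehat\Omega^+]$. A structural observation underpins the whole argument: the assignments on $h_{i,0},x_{i,0}^{\pm}$ force $\triangle$ to restrict on the subalgebra $\iota(U(\mathfrak{g}))$ to the primitive coproduct $\square$, so each $\widetilde x_\alpha^{\pm}$ (being the image of an element of $\mathfrak{g}$) is primitive. Feeding this together with Lemma~\ref{half-casimir-Y} into the super Jacobi identity yields a closed form $\triangle(x_{j,1}^{\pm})=\square(x_{j,1}^{\pm})+w_j^{\pm}$ with correction $w_j^{\pm}\in\iota(U(\mathfrak{g}))^{\otimes 2}$. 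I would also verify that the full family \eqref{MP:3} holds for every pair $(i,j)$ with $(\alpha_i,\alpha_j)\neq 0$, not merely the distinguished ones used to define $x_{i,1}^{\pm}$, using \eqref{MP:1} and the invariance of $\widehat\Omega^+$.

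It then remains to check relations \eqref{MP:1}--\eqref{MP:8} on the images. The relations supported in degree $0$ --- \eqref{MP:6}, \eqref{MP:7}, \eqref{MP:8}, and the $m=n=0$ instances of \eqref{MP:2} and \eqref{MP:4} --- are immediate, since there $\triangle=\square$ and $\square$ preserves Lie brackets; in particular the two Serre-type families are untouched, so the cases $\mathfrak{g}=D_{\lambda}$ and $\mathfrak{g}=D_{\lambda}^{\circ}$ are handled uniformly. The relations \eqref{MP:2} with $n=1$ and \eqref{MP:3} are essentially built into the definition of $\triangle(x_{i,1}^{\pm})$ and follow from Lemma~\ref{half-casimir-Y}, while the mixed case $[h_{i,0},h_{j,1}]=0$ of \eqref{MP:1} reduces to \eqref{half-casimir-Y1} via the Jacobi identity. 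The substantive content lies in the mixed degree-one relations: the $m+n=1$ cases of \eqref{MP:4}, relation \eqref{MP:5}, and \eqref{MP:1} with both indices equal to $1$. For each I would apply $\triangle$ to both sides, expand using the explicit forms above, and collapse the result --- via Lemma~\ref{half-casimir-Y}, the Jacobi identity, and primitivity of $\iota(U(\mathfrak{g}))$ --- to an identity purely about $\widehat\Omega^+$.

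The hard part will be the verification of $[\triangle(h_{i,1}),\triangle(h_{j,1})]=0$ and of relation \eqref{MP:5}: after expansion these leave a quadratic term of the shape $[[h_{i,0}\otimes 1,\widehat\Omega^+],[h_{j,0}\otimes 1,\widehat\Omega^+]]$ (respectively a term quadratic in $\widehat\Omega^+$ across the two tensor slots) which must cancel the surviving cross-terms. This cancellation is a classical Yang--Baxter/Casimir identity for $\widehat\Omega^+$, which I would derive from the invariance property $[x\otimes 1+1\otimes x,\,\Omega]=0$ of the full Casimir $\Omega$ together with the decomposition relating $\Omega$ and $\Omega^+$. Once $\triangle$ is known to be an algebra homomorphism, coassociativity is checked on generators. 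It is trivial on the primitive $h_{i,0},x_{i,0}^{\pm}$; on $h_{i,1}$ both $(\triangle\otimes\mathrm{id})\triangle(h_{i,1})$ and $(\mathrm{id}\otimes\triangle)\triangle(h_{i,1})$ reduce to the same symmetric three-fold expression, precisely because the correction $[h_{i,0}\otimes 1,\widehat\Omega^+]$ is assembled from primitive elements of $\iota(U(\mathfrak{g}))$ and hence spreads symmetrically; coassociativity on $x_{i,1}^{\pm}$ then follows since both composites are algebra homomorphisms agreeing on the generating set.
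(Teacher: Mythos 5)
Your overall architecture matches the paper's proof: reduce to the minimalistic presentation, derive the forced formulas $\triangle(x_{i,1}^{+})=\square(x_{i,1}^{+})-[1\otimes x_{i,0}^{+},\,\widehat{\Omega}^+]$ and $\triangle(x_{i,1}^{-})=\square(x_{i,1}^{-})+[x_{i,0}^{-}\otimes 1,\,\widehat{\Omega}^+]$ from Lemma \ref{half-casimir-Y}, dispose of the degree-zero relations by primitivity, verify \eqref{MP:4} and \eqref{MP:5} by direct expansion, and check coassociativity on $\widetilde{h}_{i,1}$. All of that is sound and is essentially what the paper does.

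The gap is in your treatment of $[\triangle(h_{i,1}),\,\triangle(h_{j,1})]=0$. You assert that after expansion this "collapses to an identity purely about $\widehat{\Omega}^+$", provable from the invariance $[x\otimes 1+1\otimes x,\,\Omega]=0$ and the relation between $\Omega$ and $\Omega^+$. It does not: the identity to be proved (equation \eqref{long-eq} in the paper) has the cross terms $[\square(\widetilde{h}_{i,1}),\,\widetilde{x}_{\alpha}^{-}\otimes\widetilde{x}_{\alpha}^{+}]$ on one side, and for non-simple $\alpha$ the commutator $[\widetilde{h}_{i,1},\,\widetilde{x}_{\alpha}^{\pm}]$ is a genuine degree-one Yangian element lying outside $\iota(U(\mathfrak{g}))$, so no amount of Casimir/cYBE manipulation inside $\iota(U(\mathfrak{g}))^{\otimes 2}$ can evaluate it. What is actually needed is the extra input that the combination $\check{h}_{ij,\alpha}=(\alpha_j,\alpha)\widetilde{h}_{i,1}-(\alpha_i,\alpha)\widetilde{h}_{j,1}$ satisfies $[\check{h}_{ij,\alpha},\,\widetilde{x}_{\alpha}^{\pm}]=[\check{v}_{ji,\alpha},\,\widetilde{x}_{\alpha}^{\pm}]$ with $\widetilde{v}_i=\tfrac12\sum_{\alpha}(\alpha_i,\alpha)\widetilde{x}_{\alpha}^{-}\widetilde{x}_{\alpha}^{+}$, i.e.\ that the level-one parts cancel and the commutator drops back into $\iota(U(\mathfrak{g}))$. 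The paper obtains this from the ad-equivariant assignment $J(h_{i,0})=\widetilde{h}_{i,1}+\widetilde{v}_i$, $J([x,\,y])=[x,\,J(y)]$, whose well-definedness is itself a nontrivial fact imported from \cite{GNW18,MS22}; only after this substitution does the computation reduce to root-vector bookkeeping with the structure constants $C_{i,j;\alpha,\beta}=(\alpha_i,\alpha)(\alpha_j,\beta)-(\alpha_j,\alpha)(\alpha_i,\beta)$ and the pairing identities for $[\widetilde{x}_{\beta}^{\pm},\,\widetilde{x}_{\alpha}^{-}]$. Your plan is missing this ingredient, and without it the hardest relation cannot be closed.
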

Within the framework of the recurrence relation \eqref{generating}, the action of $\triangle$ can be extended to the entire generating set $\{h_{i,m},x_{i,m}^{\pm}\}_{i\in I,m\in\mathbb{Z}_+}$ of $Y(\mathfrak{g})$ through the following construction. It follows immediately from the definition of $\tilde{h}_{i,1}$
that $\triangle(\widetilde{h}_{i,1})=\square(\widetilde{h}_{i,1})+[h_{i,0}\otimes 1,\,\widehat{\Omega}^+]$. Then we have for $n\geqslant 0$,
\begin{equation}\label{generating-tri}
    \begin{split}
        &\mathfrak{g}=D_{\lambda}\begin{cases}
            \triangle(x_{j,n+1}^{\pm})=\pm\frac{1}{b_{ij}}[\triangle(\widetilde{h}_{i,1}),\,\triangle(x_{j,n}^{\pm})],\quad (i,j)\in\{(1,1),(1,2),(2,3)\}, \\
            \triangle(h_{i,n+1})=[\triangle(x_{i,0}^+),\,\triangle(x_{i,n+1}^-)].
        \end{cases} \\
        &\mathfrak{g}=D_{\lambda}^{\circ}\begin{cases}
            \triangle(x_{j,n+1}^{\pm})=\pm\frac{1}{b_{ij}}[\triangle(\widetilde{h}_{i,1}),\,\triangle(x_{j,n}^{\pm})],\quad (i,j)\in\{(1,2),(2,3),(3,1)\}, \\
            \triangle(h_{i,n+1})=[\triangle(x_{i,0}^+),\,\triangle(x_{i,n+1}^-)].
        \end{cases}
    \end{split}
    \end{equation}
We defer the proof of Theorem \ref{coproduct} to Section \ref{se:hopfstructure:proof}.

\vspace{1em}
Let us define:
\begin{itemize}
    \item the counit $\epsilon:\ Y(\mathfrak{g})\rightarrow \mathbb{C}$ as the superalgebraic homomorphism specified by
    \begin{gather*}
    \epsilon(1)=1,\quad \epsilon(h_{i,m})=\epsilon(x_{i,m}^{\pm})=0;
    \end{gather*}

    \item the antipode $\mathcal{S}:\ Y(\mathfrak{g})\rightarrow Y(\mathfrak{g})$ as the anti-automorphism determined by
    \begin{align*}
    &\mathcal{S}(h_{i,0})=-h_{i,0},\quad \mathcal{S}(x_{i,0}^{\pm})=-x_{i,0}^{\pm}, \\
    &\mathcal{S}(h_{i,1})=-h_{i,1}+h_{i,0}^2-\sum_{\alpha\in\mathcal{R}^+}(\alpha_i,\alpha)(-1)^{|\alpha|}\tilde{x}_{\alpha}^-\tilde{x}_{\alpha}^+. 
    \end{align*}
\end{itemize}
These definitions ensure the validity of relation 
\begin{gather*}
    \mu\circ (\mathcal{S}\otimes 1)\circ \triangle =\mu\circ (1\otimes \mathcal{S})\circ \triangle =\nu\circ \epsilon,
\end{gather*}
where $\mu$ is the multiplication map mentioned in Section \ref{se:simpleLiesuper:HC} and $\nu: \mathbb{C}\rightarrow Y(\mathfrak{g})$ is the embedding $\nu(a)=a1$ for all $a\in\mathbb{C}$.

\begin{corollary}
    The super Yangian $Y(\mathfrak{g})$ admits a Hopf superalgebra structure endowed with coproduct $\triangle$, counit $\epsilon$ and antipode $\mathcal{S}$. 
\end{corollary}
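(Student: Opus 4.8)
The plan is to check the three families of Hopf-superalgebra axioms — the counit identities, coassociativity together with bialgebra compatibility, and the antipode identity — each time reducing the verification to the finite generating set furnished by the minimalistic presentation (Proposition \ref{minimalistic}). Theorem \ref{coproduct} already supplies that $\triangle$ is a coassociative superalgebra homomorphism, so the coalgebra structure and the compatibility of $\triangle$ with the product are in hand; likewise $\epsilon$ is declared a superalgebra homomorphism, and its well-definedness is automatic since every defining relation of Definition \ref{Def-Y} is a relation among elements all sent to $0$. It therefore remains to verify the counit identities and the antipode identity.

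For the counit, I would observe that $(\epsilon\otimes 1)\circ\triangle$ and $(1\otimes\epsilon)\circ\triangle$ are superalgebra endomorphisms of $Y(\mathfrak{g})$, being composites of homomorphisms; hence it suffices to check that they act as the identity on the generators $h_{i,0},h_{i,1},x_{i,0}^{\pm}$, which generate $Y(\mathfrak{g})$ through the recurrences \eqref{generating}. On the primitive generators $h_{i,0},x_{i,0}^{\pm}$ this is immediate, and on $h_{i,1}$ it follows because $\epsilon$ annihilates each tensor leg of the extra terms $h_{i,0}\otimes h_{i,0}$ and $\sum_{\alpha}(\alpha_i,\alpha)\widetilde{x}_{\alpha}^{-}\otimes\widetilde{x}_{\alpha}^{+}$ appearing in $\triangle(h_{i,1})$. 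This, with Theorem \ref{coproduct}, completes the bialgebra structure.

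For the antipode, I would first confirm that $\mathcal{S}$ is well defined, that is, that its prescribed values on $h_{i,0},x_{i,0}^{\pm},h_{i,1}$ extend to an anti-automorphism respecting the relations \eqref{MP:1}--\eqref{MP:8} read in the opposite algebra with the Koszul signs; the substantive checks are the degree-one relations \eqref{MP:3},\eqref{MP:5} and the Serre-type relations \eqref{MP:7}--\eqref{MP:8}, after computing $\mathcal{S}(x_{i,1}^{\pm})$ from the anti-homomorphism property applied to $x_{j,1}^{\pm}=\pm b_{ij}^{-1}[\widetilde{h}_{i,1},x_{j,0}^{\pm}]$. Next I would verify $\mu\circ(\mathcal{S}\otimes 1)\circ\triangle=\mu\circ(1\otimes\mathcal{S})\circ\triangle=\nu\circ\epsilon$ on these generators: it is trivial on $h_{i,0},x_{i,0}^{\pm}$, which are primitive and negated by $\mathcal{S}$, while on $h_{i,1}$ one substitutes the explicit expansion of $\triangle(h_{i,1})$ from Theorem \ref{coproduct} and uses $\mathcal{S}(\widetilde{x}_{\alpha}^{-})=-\widetilde{x}_{\alpha}^{-}$ (each $\widetilde{x}_{\alpha}^{-}$ being a bracket word in the $x_{i,0}^{-}$, on which $\mathcal{S}$ acts as $-\mathrm{id}$). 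The defining value of $\mathcal{S}(h_{i,1})$ is tailored precisely so that the surviving terms cancel. Finally, the standard fact that the set of elements satisfying the antipode identity is a subalgebra of the bialgebra lets me propagate the identity from the generators to all of $Y(\mathfrak{g})$.

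I expect the main obstacle to be the well-definedness of $\mathcal{S}$: confirming that the anti-automorphism respects the quadratic and Serre relations requires the explicit, sign-sensitive value of $\mathcal{S}$ on the level-one generators $x_{i,1}^{\pm}$, and the super signs must be tracked carefully throughout. This is where the computation is heaviest, and where the two Dynkin data $D_{\lambda}$ and $D_{\lambda}^{\circ}$ — governed by \eqref{MP:7} versus \eqref{MP:8} — have to be treated separately.
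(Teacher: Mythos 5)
Your proposal is correct and follows essentially the same route as the paper, which derives the corollary from Theorem \ref{coproduct} together with the definitions of $\epsilon$ and $\mathcal{S}$ and simply asserts the antipode identity $\mu\circ(\mathcal{S}\otimes 1)\circ\triangle=\mu\circ(1\otimes\mathcal{S})\circ\triangle=\nu\circ\epsilon$; you supply the standard verifications the paper leaves implicit (counit and antipode identities checked on the minimalistic generators, $\mathcal{S}(\widetilde{x}_{\alpha}^{\pm})=-\widetilde{x}_{\alpha}^{\pm}$ so that the chosen value of $\mathcal{S}(h_{i,1})$ makes the terms cancel, and propagation via the fact that the elements satisfying the antipode identity form a subalgebra). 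Your identification of the well-definedness of the anti-automorphism $\mathcal{S}$ on the relations \eqref{MP:1}--\eqref{MP:8} as the main remaining burden is apt, since the paper does not address it explicitly either.
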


\subsection{Proof of Theorem \ref{coproduct}}\label{se:hopfstructure:proof}
We split the proof into three parts. 
We begin by evaluating $\triangle(x_{i,1}^{\pm})$, establishing the explicit formula
\begin{align}\label{coproduct-1}
    \triangle(x_{i,1}^+)&=\square(x_{i,1}^+)-[1\otimes x_{i,0}^+,\,\widehat{\Omega}^+], \\ \label{coproduct-2}
    \triangle(x_{i,1}^-)&=\square(x_{i,1}^-)+[x_{i,0}^-\otimes 1,\,\widehat{\Omega}^+].
\end{align}
By Lemma \ref{half-casimir-Y}, we have for a suitable pair $(i,j)$ as in \eqref{generating-tri}, 
\begin{align*}
    \triangle(x_{i,1}^+)&=\frac{1}{b_{ji}}[\triangle(\widetilde{h}_{j,1}),\,\triangle(x_{i,0}^+)] 
    =\frac{1}{b_{ji}}\left([\square(\widetilde{h}_{j,1}),\,\square(x_{i,0}^+)]+[[h_{j,0}\otimes 1,\,\widehat{\Omega}^+],\,\square(x_{i,0}^+)]\right) \\
    &=\square(x_{i,1}^+)-\frac{1}{b_{ji}}\left([1\otimes h_{j,0},\,[\widehat{\Omega}^+,\,\square(x_{i,0}^+)]]+[[1\otimes h_{j,0}^+,\,\square(x_{i,0}^+)],\,\widehat{\Omega}^+]\right) \\
    &=\square(x_{i,1}^+)-\frac{1}{b_{ji}}[1\otimes h_{j,0}^+,\,x_{i,0}^+\otimes h_{i,0}]-[1\otimes x_{i,0}^+,\,\widehat{\Omega}^+] 
    =\square(x_{i,1}^+)-[1\otimes x_{i,0}^+,\,\widehat{\Omega}^+].
\end{align*}
Similar to $\triangle(x_{i,1}^-)$.

\vspace{1em}
Next, we verify that $\triangle$ preserves the defining relations of $Y(\mathfrak{g})$. By Proposition \ref{minimalistic}, we only need to check relations \eqref{MP:1}--\eqref{MP:5} involving $h_{i,1}$, $\widetilde{h}_{i,1}$ and $x_{i,1}^{\pm}$. 

Relation \eqref{MP:2}, \eqref{MP:3} is trivial. For relation \eqref{MP:4}, one gets
\begin{align*}
    [\triangle(x_{i,0}^+),\,\triangle(x_{j,1}^-)]&=[\square(x_{i,0}^+),\,\square(x_{j,1}^-)]-[\square(x_{i,0}^+),\,[\widehat{\Omega}^+,\,x_{j,0}^-\otimes 1]] \\
    &=\delta_{ij}\square(h_{i,1})-[[\square(x_{i,0}^+),\,\widehat{\Omega}^+],\,x_{j,0}^-\otimes 1]-[\widehat{\Omega}^+,\,[\square(x_{i,0}^+),\,x_{j,0}^-\otimes 1]] \\
    &=\delta_{ij}\square(h_{i,1})+[x_{i,0}^+\otimes h_{i,0},\,x_{j,0}^-\otimes 1]-[\widehat{\Omega}^+,\,\delta_{ij}h_{i,0}\otimes 1] \\
    &=\delta_{ij}\triangle(h_{i,1}). 
\end{align*}
Similarly, we also have
\begin{align*}
    [\triangle(x_{i,1}^+),\,\triangle(x_{j,0}^-)]=\delta_{ij}\square(h_{i,1})+[1\otimes x_{i,0}^+,\,h_{j,0}\otimes x_{j,0}^-]-[1\otimes \delta_{ij}h_{i,0},\,\widehat{\Omega}^+]=\delta_{ij}\triangle(h_{i,1}).
\end{align*}

We are left to calculate \eqref{MP:5}. We only prove for ``$+$'' and the case of ``$-$'' can be checked in a similar way. Indeed, 
\begin{align*}
    &\quad\ [\triangle(x_{i,1}^+),\,\triangle(x_{j,0}^+)]-[\triangle(x_{i,0}^+),\,\triangle(x_{j,1}^+)] \\
    &=[\square(x_{i,1}^+),\,\square(x_{j,0}^+)]-[[1\otimes x_{i,0}^+,\,\widehat{\Omega}^+],\,\square(x_{j,0}^+)]-[\square(x_{i,0}^+),\,\square(x_{j,1}^+)]+[\square(x_{i,0}^+),\,[1\otimes x_{j,0}^+,\,\widehat{\Omega}^+]] \\
    &=\frac{b_{ij}}{2}\square(\{x_{i,0}^+,\,x_{j,0}^+\})-[1\otimes x_{i,0}^+,\,x_{j,0}^+\otimes h_{j,0}]+[x_{i,0}^+\otimes h_{i,0},\,1\otimes x_{j,0}^+] \\
    &=\frac{b_{ij}}{2}\square(\{x_{i,0}^+,\,x_{j,0}^+\})+b_{ij}\left((-1)^{|i||j|}x_{j,0}^+\otimes x_{i,0}^++x_{i,0}^+\otimes x_{j,0}^+\right)=\frac{b_{ij}}{2}\{\square(x_{i,0}^+),\,\square(x_{j,0}^+)\}. 
\end{align*}

The rest is to prove that the operator $\triangle$ preserves $[h_{i,1},h_{j,1}]=0$. It is enough to show that 
$$[\triangle(\widetilde{h}_{i,1}),\,\triangle(\widetilde{h}_{j,1})]=0, $$
or equivalently,
\begin{gather}\label{long-eq}
    [[h_{i,0}\otimes 1,\,\widehat{\Omega}^+],\,[h_{j,0}\otimes 1,\,\widehat{\Omega}^+]]=-[\square(h_{i,1}),\,[h_{j,0}\otimes 1,\,\widehat{\Omega}^+]]-[[h_{i,0}\otimes 1,\,\widehat{\Omega}^+],\,\square(h_{j,1})].
\end{gather}
To establish \eqref{long-eq}, we analyze the left-hand side (LHS) and right-hand side (RHS) separately. The LHS expands to
\begin{equation*}
    \frac{1}{2}\sum_{p\geqslant 1}\sum_{{\scriptsize \begin{array}{c}
        \operatorname{ht}(\alpha+\beta)=p   \\
          \alpha,\beta\in\mathcal{R}^+ 
    \end{array}} }(\alpha_i,\alpha)(\alpha_j,\beta)\left([\widetilde{x}_{\alpha}^-,\,\widetilde{x}_{\beta}^-]\otimes \{\widetilde{x}_{\alpha}^+,\,\widetilde{x}_{\beta}^+\}+\{\widetilde{x}_{\alpha}^-,\,\widetilde{x}_{\beta}^-\}\otimes [\widetilde{x}_{\alpha}^+,\,\widetilde{x}_{\beta}^+]\right).
\end{equation*}
The RHS of \eqref{long-eq} is equal to
\begin{gather*}
    \sum_{\alpha\in\mathcal{R}^+}(\alpha_j,\alpha)[\square(\widetilde{h}_{i,1}),\,\widetilde{x}_{\alpha}^-\otimes \widetilde{x}_{\alpha}^+]-\sum_{\alpha\in\mathcal{R}^+}(\alpha_i,\alpha)[\square(\widetilde{h}_{j,1}),\,\widetilde{x}_{\alpha}^-\otimes \widetilde{x}_{\alpha}^+].
\end{gather*}
Therefore, equality \eqref{long-eq} holds if equations \eqref{long:eq:a1} and \eqref{long:eq:a2} are satisfied for all $p\geqslant 1$,
\begin{align}\label{long:eq:a1}
    \sum_{\operatorname{ht}(\alpha+\beta)=p}(\alpha_i,\alpha)(\alpha_j,\beta)\{\widetilde{x}_{\alpha}^-,\,\widetilde{x}_{\beta}^-\}\otimes [\widetilde{x}_{\alpha}^+,\,\widetilde{x}_{\beta}^+]&=2\sum_{\operatorname{ht}(\alpha)=p}[\check{h}_{ij,\alpha},\,\widetilde{x}_{\alpha}^-]\otimes \widetilde{x}_{\alpha}^+, \\ \label{long:eq:a2}
    \sum_{\operatorname{ht}(\alpha+\beta)=p}(\alpha_i,\alpha)(\alpha_j,\beta)[\widetilde{x}_{\alpha}^-,\,\widetilde{x}_{\beta}^-]\otimes 
    \{\widetilde{x}_{\alpha}^+,\,\widetilde{x}_{\beta}^+\}
    &=2\sum_{\operatorname{ht}(\alpha)=p}\widetilde{x}_{\alpha}^-\otimes [\check{h}_{ij,\alpha},\,\widetilde{x}_{\alpha}^+],
\end{align}
where $\check{h}_{ij,\alpha}=(\alpha_j,\alpha)\widetilde{h}_{i,1}-(\alpha_i,\alpha)\square(\widetilde{h}_{j,1})$ and all the roots $\alpha,\beta$ considered lie in $\mathcal{R}^+$.

We proceed to introduce a function on the sub-superalgebra $\iota(\mathfrak{g})$. Define
\begin{align*}
    &J(h_{i,0})=\widetilde{h}_{i,1}+\widetilde{v}_{i},\quad \text{for}~~\widetilde{v}_i=\frac{1}{2}\sum_{\alpha\in\mathcal{R}^+}(\alpha_i,\alpha)\widetilde{x}_{\alpha}^-\widetilde{x}_{\alpha}^+, \\
    &J([x,\,y])=[x,\,J(y)],\quad \text{for all }~y\in\iota(\mathfrak{h}),\ x\in\iota(\mathfrak{g}). 
\end{align*}
The well-definedness follows from \cite{GNW18,MS22}. 
Then we have 
\begin{gather*}
    [h_{k,0},\,J(\widetilde{x}_{\alpha}^{\pm})]=-[\widetilde{x}_{\alpha}^{\pm},\,J(h_{k,0})]=\pm(\alpha_k,\alpha)J(\widetilde{x}_{\alpha}^{\pm}).
\end{gather*}
This implies that
\begin{gather*}
    [\check{h}_{ij,\alpha},\,\widetilde{x}_{\alpha}^-]\otimes \widetilde{x}_{\alpha}^+=[\check{v}_{ji,\alpha},\,\widetilde{x}_{\alpha}^-]\otimes \widetilde{x}_{\alpha}^+, 
\end{gather*}
where $\check{v}_{ij,\alpha}=(\alpha_j,\alpha)\widetilde{v}_i-(\alpha_i,\alpha)\widetilde{v}_j$.
By the definition of $\widetilde{v}_i$, we have
\begin{gather}\label{long:eq:a3}
    [\check{v}_{ji,\alpha},\,\widetilde{x}_{\alpha}^-]=-\frac{1}{2}\sum_{\beta\in\mathcal{R}^+}C_{i,j;\alpha,\beta}\left((-1)^{|\alpha||\beta|}[\widetilde{x}_{\beta}^-,\,\widetilde{x}_{\alpha}^-]\widetilde{x}_{\beta}^++\widetilde{x}_{\beta}^-[\widetilde{x}_{\beta}^+,\,\widetilde{x}_{\alpha}^-]\right),
\end{gather}
where $C_{i,j;\alpha,\beta}=(\alpha_i,\alpha)(\alpha_j,\beta)-(\alpha_j,\alpha)(\alpha_i,\beta)$. 
Substitute 
\begin{align*}
    &[\widetilde{x}_{\beta}^-,\,\widetilde{x}_{\alpha}^-]=(\widetilde{x}_{\gamma}^+,[\widetilde{x}_{\beta}^-,\,\widetilde{x}_{\alpha}^-])\widetilde{x}_{\gamma}^-,\quad \text{for }~\gamma=\alpha+\beta\in\mathcal{R}^+, \\
    &[\widetilde{x}_{\beta}^+,\,\widetilde{x}_{\alpha}^-]=([\widetilde{x}_{\beta}^+,\,\widetilde{x}_{\alpha}^-],\widetilde{x}_{\gamma}^-)\widetilde{x}_{\gamma}^+,\quad \text{for }~\gamma=\beta-\alpha\in\mathcal{R}^+, \\
    &[\widetilde{x}_{\beta}^+,\,\widetilde{x}_{\alpha}^-]=(\widetilde{x}_{\gamma}^+,[\widetilde{x}_{\beta}^+,\,\widetilde{x}_{\alpha}^-])\widetilde{x}_{\gamma}^-,\quad \text{for }~\gamma=\alpha-\beta\in\mathcal{R}^+
\end{align*}
into \eqref{long:eq:a3}, we obtain
\begin{equation*}
    \begin{split}
        -2[\check{v}_{ji,\alpha},\,\widetilde{x}_{\alpha}^-]
        &=\sum_{\gamma+\beta=\alpha}C_{i,j;\beta+\gamma,\beta}([\widetilde{x}_{\gamma}^+,\widetilde{x}_{\beta}^+],\,\widetilde{x}_{\alpha}^-])\widetilde{x}_{\beta}^-\widetilde{x}_{\gamma}^- \\
        &+\sum_{\gamma-\beta=\alpha}C_{i,j;\gamma-\beta,\beta}(\widetilde{x}_{\gamma}^+,[\widetilde{x}_{\alpha}^-,\,\widetilde{x}_{\beta}^-])\widetilde{x}_{\gamma}^-\widetilde{x}_{\beta}^+ 
        +\sum_{\beta-\gamma=\alpha}C_{i,j;\beta-\gamma,\beta}([\widetilde{x}_{\beta}^+,\,\widetilde{x}_{\alpha}^-],\widetilde{x}_{\gamma}^-)\widetilde{x}_{\beta}^-\widetilde{x}_{\gamma}^+.
    \end{split}
\end{equation*}
The sum of the second and third terms in the right-hand side of this equation vanish,  
which forces the right-hand side of \eqref{long:eq:a1} equal to
\begin{align*}
    &\quad\,-\sum_{\operatorname{ht}(\alpha)=p}\sum_{\gamma+\beta=\alpha}C_{i,j;\beta+\gamma,\beta}([\widetilde{x}_{\gamma}^+,\widetilde{x}_{\beta}^+],\,\widetilde{x}_{\alpha}^-])\widetilde{x}_{\beta}^-\widetilde{x}_{\gamma}^- \otimes \widetilde{x}_{\alpha}^+ \\
    &=-\sum_{\operatorname{ht}(\alpha+\beta)=p}C_{i,j;\alpha,\beta}\widetilde{x}_{\beta}^-\widetilde{x}_{\alpha}^- \otimes ([\widetilde{x}_{\alpha}^+,\widetilde{x}_{\beta}^+],\,\widetilde{x}_{\alpha+\beta}^-])\widetilde{x}_{\alpha+\beta}^+ \\
    &=\sum_{\operatorname{ht}(\alpha+\beta)=p}(\alpha_j,\alpha)(\alpha_i,\beta)\widetilde{x}_{\beta}^-\widetilde{x}_{\alpha}^- \otimes [\widetilde{x}_{\alpha}^+,\widetilde{x}_{\beta}^+]+\sum_{\operatorname{ht}(\alpha+\beta)=p}(\alpha_i,\alpha)(\alpha_j,\beta)(-1)^{|\alpha||\beta|}\widetilde{x}_{\beta}^-\widetilde{x}_{\alpha}^- \otimes [\widetilde{x}_{\beta}^+,\widetilde{x}_{\alpha}^+] \\
    &=\mathcal{N}(p),
\end{align*}
where $\mathcal{N}(p)$ denotes the left-hand side of \eqref{long:eq:a1} subject to $p\geqslant 1$. So we prove \eqref{long:eq:a1}. A similar method can be used to prove \eqref{long:eq:a2}. 

\vspace{1em}
Finally, we show that the coassociativity of the operator $\triangle$, that is, $\triangle$ satisfies
\begin{gather*}
    (1\otimes \triangle)\circ \triangle=(\triangle\otimes 1)\circ \triangle.
\end{gather*}
We only need to check for $\widetilde{h}_{i,1}$. In fact, 
\begin{align*}
    (1\otimes \triangle)\circ \triangle(\widetilde{h}_{i,1})&=(1\otimes \triangle)\left(\square(\widetilde{h}_{i,1})-\sum_{\alpha\in\mathcal{R}^+}(\alpha_i,\alpha)\widetilde{x}_{\alpha}^-\otimes \widetilde{x}_{\alpha}^+\right) \\
    &=\widetilde{h}_{i,1}\otimes 1\otimes 1+1\otimes \widetilde{h}_{i,1}\otimes 1+1\otimes 1\otimes \widetilde{h}_{i,1} \\
    &\qquad-\sum_{\alpha\in\mathcal{R}^+}(\alpha_i,\alpha)\left(1\otimes \widetilde{x}_{\alpha}^-\otimes \widetilde{x}_{\alpha}^+ +\widetilde{x}_{\alpha}^-\otimes \widetilde{x}_{\alpha}^+\otimes 1+\widetilde{x}_{\alpha}^-\otimes 1\otimes \widetilde{x}_{\alpha}^+\right) \\
    &=(\triangle\otimes 1)\circ \triangle(\widetilde{h}_{i,1}).
\end{align*}

Hence, we complete the proof.

\vspace{2em}
\section*{CRediT authorship contribution statement}
{\bf Hongda Lin:} Writing – original draft. {\bf Honglian Zhang:} Writing – original draft.

\medskip
\section*{Declaration of competing interest}
The authors declare that they have no known competing financial interests or personal relationships that 
could have appeared to influence the work reported in this paper. 

\medskip
\section*{Acknowledgments}
H. Zhang is partially supported by the support of the National Natural Science Foundation of China 12271332 and Natural Science Foundation of Shanghai 22ZR1424600.

\medskip

\end{document}